\author{K\'aroly J. B\"or\"oczky\footnote{Alfr\'ed R\'enyi Institute of Mathematics, Realtanoda u. 13-15, H-1053 Budapest, Hungary, boroczky.karoly.j@renyi.hu}, 
Shibing Chen\footnote{School of Mathematical Sciences, University of Science and Technology of China, 230026 Hefei, China, chenshib@ustc.edu.cn}, 
Weiru Liu\footnote{School of Mathematics and Statistics, Central China Normal University,  
430079 Wuhan, China, lwr1997@ccnu.edu.cn}, 
Christos Saroglou\footnote{Department of Mathematics, University of Ioannina, Greece, csaroglou@uoi.gr}}
\title{Uniqueness in the near isotropic $L_p$ dual Minkowski problem}
\newcommand{\proofbox}{\mbox{ $\Box$}\\}
\newcommand{\R}{\mathbb{R}}
\newcommand{\E}{\mathbb{E}}
\newcommand{\N}{\mathbb{N}}
\newcommand{\HH}{\mathcal{H}}
\newtheorem{lemma}{LEMMA}[section]
\newtheorem{theo}[lemma]{THEOREM}
\newtheorem{claim}[lemma]{CLAIM}
\newtheorem{coro}[lemma]{COROLLARY}
\newtheorem{prop}[lemma]{PROPOSITION}
\newtheorem{remark}[lemma]{REMARK}
\newtheorem{problem}[lemma]{PROBLEM}
\begin{document}

\maketitle

\begin{abstract}
For $n>1$ and $-1<p<1$, we prove that if $q$ is close to $n$ and the $q$th $L_p$ dual curvature is H\"older close to be the constant one function, then this "near isotropic" $q$th $L_p$ dual Minkowski problem on $S^{n-1}$ has a unique solution. Along the way, we establish a $C^0$ estimate for $-1<p<1$ that is optimal in the sense that if $p<-1$ and $q=n$, then it is known that the analogous $C^0$ estimate does not hold. We also prove the uniqueness of the solution of the near isotropic even  $q$th $L_p$ dual Minkowski problem on $S^{n-1}$ if $-1<p<q<\min\{n,n+p\}$ and $q>0$.
\end{abstract}

\noindent {\bf MSC classes:} 52A20 (35J96)

	
\section{Introduction}
\label{secIntroduction}

In his classical paper about the Gauss curvature flow and ``worn stones", Firey \cite{Fir74} proposed on the one hand the classification of the cone volume measure - that is, the solution of the Monge-Amp\`ere equation \eqref{LpMonge} below, or its "Alexandrov version" \eqref{LpMinkowskiAlexandrov} on $S^{n-1}$ in the case $p=0$, - and on the other hand, asked whether the solution is unique in the isotropic case; namely, when the function $f$ in \eqref{LpMonge} is a positive constant. The problem was extended to the $L_p$ Minkowski problem \eqref{LpMonge} and \eqref{LpMinkowskiAlexandrov} by Lutwak in the 1990's where the classical Minkowski problem is the $p=1$ case. While if $p>1$, then the solution of \eqref{LpMinkowskiAlexandrov} is unique for any 
non-trivial finite measure $\mu$ according to Hug, Lutwak, Yang, Zhang \cite{HLYZ05} and Chou, Wang \cite{ChW06}, 
this property fails if $p<1$:
The existence of multiple solutions of \eqref{LpMinkowskiAlexandrov} was established by Chen, Li, Zhu \cite{CLZ17} if $p\in(0,1)$, Chen, Li, Zhu \cite{CLZ19} if $p=0$, and by  Li, Liu, Lu \cite{LLL22} if $p<0$ (in the letter case, we may assume that the $f$ and $h$ in \eqref{LpMonge} are even and have axial rotational symmetry). In addition, E. Milman \cite{Mil24} provides a systematic study of non-uniqueness in \eqref{LpMonge} when $-n<p<1$. Finally, if $p=-n$, then centered ellipsoids of the same volume are all solutions of the isotropic $L_{-n}$ Minkowski problem \eqref{LpMonge} for the suitable common constant $f$, and they are actually all the solutions according to
 the earlier papers  Calabi \cite{Cal58}, Pogolerov \cite{Pog72}, Cheng, Yau \cite{ChY86}, and the novel approaches
Crasta, Fragal\'a \cite{CrF23}, Ivaki, E. Milman \cite{IvM23} and Saroglou \cite{Sar22}.

However, in the isotropic case of \eqref{LpMonge}, Brendle, Choi, Daskalopoulos \cite{BCD17} established the uniqueness of the solution if $p\in(-n,1)$ (see also Saroglou \cite{Sar22}). Therefore, it is a natural question whether the solution of the $L_p$ Minkowski problem  \eqref{LpMonge} is unique if $f$ is $C^\alpha$ close to a constant function. If $p\in[0,1)$, then the positive answer has been established by Chen, Feng, Liu \cite{CFL22}, B\"or\"oczky, Saroglou \cite{BoS24} and Hu, Ivaki \cite{HuI25} (cf. Theorem~\ref{local-uniqueness-Sp}). In this paper, we extend the uniqueness of the solution of the near isotropic $L_p$ Minkowski problem \eqref{LpMonge} to the case $p\in(-1,0)$ (cf. Theorem~\ref{local-uniqueness-Cpq}), as well. Actually, Theorem~\ref{local-uniqueness-Cpq} is much more general, it discusses the recently found $L_p$ $q$th dual curvature measure (cf. Huang, Lutwak, Yang, Zhang \cite{HLYZ16} and Lutwak, Yang, Zhang \cite{LYZ18}), and the corresponding Monge-Amp\'ere equation \eqref{Lp-dualMink-Monge-Ampere-intro}.

Let us introduce the notions and notations needed to state our results, see Section~\ref{sec-basics} for more detailed discussions.
We call a compact convex set in $\R^n$ with non-empty interior a convex body where we typically assume that $n\geq 2$. The family of convex bodies in $\R^n$ containing the origin $o$ is denoted by $\mathcal{K}^n_o$, and for $K\in \mathcal{K}^n_o$, the support function 
$h_K(w)=\max_{x\in K}\langle w,x\rangle$ for $w\in \R^n$ is convex and $1$-homogeneous.
Let $\partial'K$ denote the subset of the boundary  of a $K\in\mathcal{K}^n_o$ such that there exists a unique exterior unit normal vector
$\nu_K(x)$ at any point $x\in \partial'K$.
It is well-known that $\HH^{n-1}(\partial K\setminus\partial'K)=0$ and $\partial'K$ is a Borel set  (see Schneider \cite{Sch14}) where $\HH^k$ is the $k$-dimensional Hausdorff measure normalized in a way such that it coincides with the Lebesgue measure on $k$-dimensional affine subspaces. We note that for any measure in this paper, the measure of the emptyset is $0$, and to simplify formulas, we also use the notation
$$
|K|=\mathcal{H}^n(K).
$$
The function $\nu_K:\partial'K\to S^{n-1}$ is the spherical Gauss map
that is continuous on $\partial'K$.
 The surface area measure $S_K$ of $K$ is a Borel measure on $S^{n-1}$ 
satisfying that $S_K(\eta)=\HH^{n-1}(\nu_K^{-1}(\eta))$
 for any Borel set $\eta\subset S^{n-1}$. In particular, if $\partial K$ is $C^2_+$, then 
\begin{equation}
\label{SKMonge}
dS_K=\det(\nabla^2 u+u\,I)d\HH^{n-1}
\end{equation}
where $u=h_K|_{S^{n-1}}$, and $\nabla u$ and  $\nabla^2 u$ are the spherical gradient and the Hessian of $u$ with respect to a moving orthonormal frame, and $I$ denotes the identity matrix of suitable size. 

For $K\in \mathcal{K}^n_o$ and $p\in\R$, Lutwak's $L_p$ surface area $S_{p,K}$ is defined by $dS_{p,K}=u^{1-p}dS_K$
for $u=h_K|_{S^{n-1}}$
(see Lutwak \cite{Lut93}, B\"or\"oczky \cite{Bor23}); in particular, $S_K=S_{1,K}$ is the surface area measure and $V_K=\frac1n\,S_{0,K}$ is the so-called cone-volume measure.
If $p\in\R$, then the corresponding $L_p$ Minkowski problem asks for the solution of the Monge-Amp\`ere equation
\begin{equation}
\label{LpMonge}
u^{1-p}\det(\nabla^2 u +u\,{\rm Id})= f
\end{equation}
for a given non-negative  $f\in L_1(S^{n-1})$. More generally, 
given a finite non-trivial measure $\mu$ on $S^{n-1}$, we search  for a $K\in \mathcal{K}^n_o$ satisfying
\begin{equation}
\label{LpMinkowskiAlexandrov}
S_{p,K}= \mu,
\end{equation}
where  $u=h_K|_{S^{n-1}}$ is a solution of \eqref{LpMonge} in the sense of Alexandrov (or in the sense of measure) if
$d\mu=f\,d\HH^{n-1}$. 
The case $p=1$ of \eqref{LpMonge} or \eqref{LpMinkowskiAlexandrov} is the classical Minkowski problem, solved by the work of
Minkowski \cite{Min03,Min11}, Aleksandrov \cite{Ale38a,Ale96},
 Nirenberg \cite{Nir53}, Cheng, Yau \cite{ChY76}, Pogorelov \cite{Pog78}, Caffarelli \cite{Caf90a,Caf90b} stretching throughout the 20th century.
For some existence results about the $L_p$-Minkowski problem without evenness condition for $p\neq 1$, 
see for example Chou,  Wang \cite{ChW06}, Chen, Li, Zhu \cite{CLZ17,CLZ19}, 
Bianchi, B\"or\"oczky, Colesanti, Yang \cite{BBCY19} and
Guang, Li, Wang \cite{LGWa}.

The cone volume measure $V_K=\frac1n\,S_{0,K}$ nicely intertwining with linear transforms (cf. Section~\ref{sec-basics})
was introduced by Firey \cite{Fir74}, and has been a widely used tool since the paper
Gromov, Milman \cite{GromovMilman}, see for example Barthe, Gu\'{e}don, Mendelson, Naor \cite{BG}, Naor \cite{Nar07},  Paouris, Werner \cite{PaW12}.
The still open logarithmic Minkowski problem (\eqref{LpMonge} or \eqref{LpMinkowskiAlexandrov} with $p=0$)  was posed by Firey \cite{Fir74} in 1974, who showed that
if $f$ is a positive constant function and $p=0$ in \eqref{LpMonge}, then \eqref{LpMonge} has a unique even solution coming from the suitable centered ball. 
Without the evenness condition, uniqueness of the solution of \eqref{LpMonge} or \eqref{LpMinkowskiAlexandrov} is known if
\begin{itemize}
\item $p>1$, $p\neq n$ and the measure $\mu$ in \eqref{LpMinkowskiAlexandrov} is not concentrated on any  closed hemisphere (see Hug, Lutwak, Yang, Zhang \cite{HLYZ05}, and also the earlier paper by
Lutwak \cite{Lut93a} for the case when the function $f$ in \eqref{LpMonge} is a positive constant),
\item $-n<p<1$ and the function $f$ in \eqref{LpMonge} is a positive constant (see Andrews \cite{And99} if $n=2,3$, and Brendle, Choi, Daskalopoulos \cite{BCD17} if $n\geq 4$, and also the related papers or different arguments by
Andrews, Guan, Ni \cite{AGN16}, Saroglou \cite{Sar22} and Ivaki, Milman \cite{IvM23}).
\end{itemize}
As we have discussed before, the uniqueness of the solution of \eqref{LpMinkowskiAlexandrov} may not hold if $p<1$.
 Still, it is a fundamental question whether \eqref{LpMonge} has a unique solution if $-n<p<1$
and $f$ is close to be a constant function. In this respect, Chen, Feng, Liu \cite{CFL22}, B\"or\"oczky, Saroglou \cite{BoS24} and Hu, Ivaki \cite{HuI25} prove the following.

\begin{theo}[\cite{BoS24}, \cite{CFL22}, \cite{HuI25}]
\label{local-uniqueness-Sp}
For $n\geq 2$, $\alpha\in(0,1)$ and $p\in[0,1)$, there exists a constant $\varepsilon_0>0$, that depends only on $n$, $\alpha$, $p$, such that if $f\in C^{\alpha}(S^{n-1})$ satisfies $\|f-1\|_{C^{\alpha}(S^{n-1})}<\varepsilon_0$, then the equation
$$
dS_{p,K}=fd\mathcal{H}^{n-1}
$$
has a unique solution in ${\cal K}^n_o$ in the sense of  Alexandrov, and $u=h_K|_{S^{n-1}}$ is a positive $C^{2,\alpha}$ solution in \eqref{LpMonge}.
\end{theo}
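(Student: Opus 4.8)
The plan is to combine an a priori $C^0$ estimate, elliptic bootstrapping, and a linearization around the unit ball, using that for $f\equiv 1$ the unique solution in $\mathcal{K}^n_o$ is $u\equiv 1$ (Brendle--Choi--Daskalopoulos, and the references quoted above). Write $u=h_K|_{S^{n-1}}$ and view the problem as $\Phi(u):=u^{1-p}\det(\nabla^2u+u\,\mathrm{Id})=f$, where $\Phi$ is a smooth map on the open set of those $u\in C^{2,\alpha}(S^{n-1})$ with $u>0$ and $\nabla^2u+u\,\mathrm{Id}>0$, each such $u$ being the support function on $S^{n-1}$ of a convex body with $o$ in its interior. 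Using that the linearization of $u\mapsto\det(\nabla^2u+u\,\mathrm{Id})$ at $u\equiv1$ is $\varphi\mapsto\Delta_{S^{n-1}}\varphi+(n-1)\varphi$ and that of $u\mapsto u^{1-p}$ at $u\equiv1$ is $\varphi\mapsto(1-p)\varphi$, the Fr\'echet derivative of $\Phi$ at $u\equiv1$ is
\[
D\Phi(1)\varphi=\Delta_{S^{n-1}}\varphi+(n-p)\varphi .
\]
Since the eigenvalues of $-\Delta_{S^{n-1}}$ are $k(k+n-2)$ for $k=0,1,2,\dots$, the eigenvalues of $D\Phi(1)$ are $(n-p)-k(k+n-2)$: this equals $n-p>0$ for $k=0$, equals $1-p>0$ for $k=1$ because $p<1$, and is at most $-(n+p)<0$ for $k\ge2$. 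Hence $0$ is not an eigenvalue, and $D\Phi(1)\colon C^{2,\alpha}(S^{n-1})\to C^{\alpha}(S^{n-1})$ is a linear isomorphism. The hypothesis $p<1$ is used exactly here: for $p=1$ the eigenvalue at $k=1$ vanishes (translation invariance), and the scheme breaks down.

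By the inverse function theorem in Banach spaces there are $\delta>0$ and a neighborhood $\mathcal{U}$ of $u\equiv1$ in $C^{2,\alpha}(S^{n-1})$ such that $\Phi$ maps $\mathcal{U}$ bijectively onto $\{g\in C^{\alpha}(S^{n-1}):\|g-1\|_{C^{\alpha}}<\delta\}$; so for every such $g$, equation \eqref{LpMonge} has exactly one solution lying in $\mathcal{U}$, and $\|u-1\|_{C^{2,\alpha}}\to0$ as $\|g-1\|_{C^{\alpha}}\to0$. Existence of a solution in $\mathcal{K}^n_o$ for $f$ continuous and bounded away from $0$ is classical (Chou--Wang, Chen--Li--Zhu), and is in any case already furnished by this step once $\|f-1\|_{C^{\alpha}}<\delta$.

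It remains to show that, for $\varepsilon_0$ small enough, \emph{every} Alexandrov solution $K\in\mathcal{K}^n_o$ of $dS_{p,K}=f\,d\mathcal{H}^{n-1}$ with $\|f-1\|_{C^{\alpha}}<\varepsilon_0$ has $h_K|_{S^{n-1}}\in\mathcal{U}$. The $C^0$ estimate for $p\in[0,1)$ (established in the works cited in Theorem~\ref{local-uniqueness-Sp}) provides constants $0<c_1\le c_2$ depending only on $n,p$ with $c_1\le h_K\le c_2$, and moreover $\|h_K-1\|_{C^0}\to0$ as $\varepsilon_0\to0$: any Hausdorff limit of such solutions has $o$ in its interior, and by weak continuity of $S_{p,\cdot}$ it solves the isotropic problem, hence equals $B^n$. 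With $h_K$ pinched between positive constants the right-hand side $f\,u^{p-1}$ of \eqref{LpMonge} lies between positive constants, and Caffarelli's interior regularity theory for Monge--Amp\`ere equations, applied in local coordinates on $S^{n-1}$, upgrades $u=h_K|_{S^{n-1}}$ to a positive $C^{2,\alpha}$ solution of \eqref{LpMonge} with $\|u-1\|_{C^{2,\alpha}}\to0$ as $\varepsilon_0\to0$. Choosing $\varepsilon_0$ so small that this forces $u\in\mathcal{U}$ and $\|f-1\|_{C^{\alpha}}<\delta$, the previous paragraph shows $u$ is the unique element of $\mathcal{U}$ with $\Phi(u)=f$; since every solution lies in $\mathcal{U}$, the solution in $\mathcal{K}^n_o$ is unique, completing the proof.

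The main obstacle is the $C^0$ estimate, i.e. ruling out that a near-isotropic solution degenerates (becomes long and thin, or pushes $o$ towards $\partial K$) as $f\to1$; for $p\ge0$ this is available in the literature, but it is precisely the delicate point that the present paper extends to $-1<p<1$. The second technically heavy ingredient is making the bootstrap from the $C^0$ pinching up to a quantitative $C^{2,\alpha}$ pinching (in particular the a priori control of the radii of curvature) rigorous. Once these are in place, the invertibility of $D\Phi(1)$ closes the argument by a soft functional-analytic step.
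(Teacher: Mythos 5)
Your proposal follows essentially the same strategy the paper uses to prove the generalization Theorem~\ref{local-uniqueness-Cpq} (which specializes to this statement when $q=n$): linearize the operator $\Phi(u)=u^{1-p}\det(\nabla^2u+u\,I)$ at $u\equiv 1$ to get $\Delta_{S^{n-1}}+(n-p)$, observe its spectrum $\{(n-p)-k(k+n-2)\}_{k\ge0}$ misses zero precisely because $p<1$, invoke the inverse function theorem for local uniqueness near the unit ball, and then close the argument by the a priori $C^0$ estimate plus compactness, the Brendle--Choi--Daskalopoulos uniqueness of the isotropic solution, and Caffarelli regularity to force every Alexandrov solution into the inverse-function-theorem neighborhood. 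This is exactly the decomposition in Section~\ref{secUniqueness} (and in \cite{CFL22,BoS24}), so the proposal is correct and not a genuinely different route.
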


Theorem \ref{local-uniqueness-Sp} was previously established in Chen, Huang, Li, Liu \cite{CHLL20} in the even (or $o$-symmetric) case. The general case, however, appears to be much more challenging.  Ivaki \cite{Iva22} considered the stability of the solution of $L_p$ Monge-Amp\`ere equation \eqref{LpMonge} around 
the constant $1$ function from another point of view. He proved under various conditions depending on $p$ that if $p>-n$
and $f$ is close to the constant $1$ function, then any solution of \eqref{LpMonge} is close to the constant $1$ function.
 In addition, the recent paper Hu, Ivaki \cite{HuI25} proves that the unique solution of the isotropic $L_0$ Minkowski problem is stable in a very strong sense in terms of the $L_2$ distance.

Next, we discuss notions related to the $q$th dual curvature measure $\widetilde{C}_{q,K}$ on $S^{n-1}$ (cf. Section~\ref{sec-basics}) initated by Huang, Lutwak, Yang, Zhang \cite{HLYZ16}. For $q>0$ and $K\in \mathcal{K}^n_o$, the radial function $\varrho_K$  on $S^{n-1}$ is defined by
$$
\varrho_K(v)=\max\{t\geq 0:\,tv\in K\}
$$
for $v\in S^{n-1}$, and for $\omega\subset S^{n-1}$, let
$$
\alpha^*_K(\omega)=\{v\in S^{n-1}:\nu_K(\varrho_K(v)\cdot v)\cap\omega\neq \emptyset \},
$$
which is $\mathcal{H}^{n-1}$ measurable if $\omega$ is measurable.
Now for a $\mathcal{H}^{n-1}$ measurable $\omega\subset S^{n-1}$, its $q$th dual curvature measure is
$$
\widetilde{C}_{q,K}(\omega)=\int_{\alpha^*_K(\omega)}\varrho_K^{q}\,d\mathcal{H}^{n-1}.
$$
Since $\widetilde{C}_{q,K}(\{v\in S^{n-1}:\,h_K(v)=0\})=0$ for given $q>0$ and $K\in\mathcal{K}_o^n$ (cf. \eqref{Cq-nuo-zero}), it is possible to extend the definition of Lutwak, Yang, Zhang \cite{LYZ18} of the $L_p$ $q$th dual  curvature measure on $S^{n-1}$ as
$$
d\widetilde{C}_{p,q,K}= u^{-p}\,d\widetilde{C}_{q,K}
$$
for $u=h_K|_{S^{n-1}}$ and  $p\in\R$ (cf. B\"or\"oczky, Fodor \cite{BoF19}). It is readily a finite measure on $S^{n-1}$ if $o\in{\rm int}\,K$ or $p\leq 0$, and it is also a finite measure if $0<p<\min\{q,1\}$ (cf. Lemma~\ref{CpqK-bounded}). As special cases, if $q>0$ and $p<1$, then
$\widetilde{C}_{0,q,K}=\widetilde{C}_{q,K}$ and $\widetilde{C}_{p,n,K}=h_K^{1-p}\,dS_K=S_{p,K}$.
For a non-negative function $f\in L_1(S^{n-1})$, $q>0$ and $p\in\R$, the  Monge-Amp\`ere equation on $S^{n-1}$ corresponding to the $L_p$ $q$th dual Minkowski problem  is
\begin{equation}
\label{Lp-dualMink-Monge-Ampere-intro}
\left(\|\nabla u\|^2+u^2\right)^{\frac{q-n}2}\cdot u^{1-p}\det\left(\nabla^2 u+u\,I\right)=f,
\end{equation}
and a $K\in\mathcal{K}_o^n$ is a solution of the $L_p$ dual Minkowski problem \eqref{Lp-dualMink-Monge-Ampere-intro} in Alexandrov's sense (in the sense of measure) if
\begin{equation}
\label{Lp-dualMink-Monge-Ampere-measure-intro}
d\widetilde{C}_{p,q,K}=f\,d\mathcal{H}^{n-1}
\end{equation}
where we assume that $\mathcal{H}^{n-1}(\{x\in\partial'K:\langle x,\nu_K(x)\rangle=0\})=0$ when $p\geq 1$.

Huang, Zhao \cite{HuZ18} proved using the maximum principle that if $p>q$, then the solution of \eqref{Lp-dualMink-Monge-Ampere-intro} is unique.

\begin{theo}[Huang, Zhao]
\label{uniqueness-Cpq-p>q}
For $n\geq 2$, $\alpha\in(0,1)$, $p>q>0$ and positive $f\in C^{\alpha}(S^{n-1})$, the   $L_p$ $q$th dual Minkowski problem \eqref{Lp-dualMink-Monge-Ampere-intro} has a unique positive solution $u\in C^{2,\alpha}(S^{n-1})$.
\end{theo}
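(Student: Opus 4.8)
The plan is to obtain uniqueness from a maximum principle comparison of two solutions, and existence from the method of continuity based at the constant solution $u\equiv 1$ of the isotropic equation; the hypothesis $q-p<0$ drives both halves, and in particular it will force the a priori $C^{0}$ bounds needed in the continuity argument.

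\emph{Uniqueness.} Given two positive solutions $u_{1},u_{2}\in C^{2,\alpha}(S^{n-1})$ of \eqref{Lp-dualMink-Monge-Ampere-intro}, I would set $m=\max_{S^{n-1}}u_{1}/u_{2}$ and let $x_{0}$ be a point where this maximum is attained; by symmetry it suffices to show $m\le 1$. At $x_{0}$ the function $\psi=u_{1}-m\,u_{2}$, which is $\le 0$ on $S^{n-1}$ with $\psi(x_{0})=0$, has $\nabla\psi(x_{0})=0$ and $\nabla^{2}\psi(x_{0})\le 0$; since $u_{1}(x_{0})=m\,u_{2}(x_{0})$ this gives $\nabla u_{1}=m\nabla u_{2}$ and $\nabla^{2}u_{1}+u_{1}I\le m\,(\nabla^{2}u_{2}+u_{2}I)$ at $x_{0}$. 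Because $u_{1}$ solves \eqref{Lp-dualMink-Monge-Ampere-intro} with $f>0$, the matrix $\nabla^{2}u_{1}+u_{1}I$ is positive definite, hence so is $m\,(\nabla^{2}u_{2}+u_{2}I)$, and monotonicity of the determinant on positive $(n-1)\times(n-1)$ matrices yields $\det(\nabla^{2}u_{1}+u_{1}I)\le m^{\,n-1}\det(\nabla^{2}u_{2}+u_{2}I)$ at $x_{0}$; similarly $\|\nabla u_{1}\|^{2}+u_{1}^{2}=m^{2}(\|\nabla u_{2}\|^{2}+u_{2}^{2})$ and $u_{1}^{1-p}=m^{1-p}u_{2}^{1-p}$ there. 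Plugging these into \eqref{Lp-dualMink-Monge-Ampere-intro} at $x_{0}$ and using that $u_{2}$ is also a solution gives
\[
f(x_{0})\ \le\ m^{(1-p)+(q-n)+(n-1)}\,f(x_{0})\ =\ m^{\,q-p}\,f(x_{0}),
\]
so $m^{\,q-p}\ge 1$, and since $q-p<0$ this forces $m\le 1$; by symmetry $u_{1}=u_{2}$.

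\emph{Existence.} I would run the method of continuity along $f_{t}=(1-t)+t f$, $t\in[0,1]$, which is positive and $C^{\alpha}$ with $f_{0}\equiv 1$ and $f_{1}=f$; let $T$ be the set of $t$ for which \eqref{Lp-dualMink-Monge-Ampere-intro} with right-hand side $f_{t}$ admits a positive $C^{2,\alpha}$ solution. Then $0\in T$ via $u\equiv 1$. For openness of $T$ one linearizes the left-hand side of \eqref{Lp-dualMink-Monge-Ampere-intro} at a solution $u_{t}$ to get a uniformly elliptic second order operator $L_{t}:C^{2,\alpha}\to C^{\alpha}$, Fredholm of index $0$; its kernel is trivial because at a positive maximum $m$ of $v/u_{t}$ (available for a nonzero $v\in\ker L_{t}$, after replacing $v$ by $-v$ if needed) one has $\nabla v=m\nabla u_{t}$ and $\nabla^{2}v\le m\nabla^{2}u_{t}$, and the same exponent bookkeeping as above makes all lower order terms collapse and leaves $0\le m(q-p)$, impossible for $m>0$ when $q-p<0$; so $L_{t}$ is an isomorphism and the implicit function theorem applies. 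For closedness one needs uniform estimates: evaluating \eqref{Lp-dualMink-Monge-Ampere-intro} at a maximum point of $u$ (where $\nabla u=0$, $\nabla^{2}u+uI\le uI$) yields $f_{t}\le u^{\,q-p}$ there, hence $\max u\le(\min_{S^{n-1}}f_{t})^{1/(q-p)}$, and at a minimum point $f_{t}\ge u^{\,q-p}$, hence $\min u\ge(\max_{S^{n-1}}f_{t})^{1/(q-p)}>0$, uniformly because $q-p<0$ and the $f_{t}$ lie between fixed positive constants; these two-sided $C^{0}$ bounds confine the corresponding convex bodies between fixed balls, so $u^{1-p}$ and $\|\nabla u\|^{2}+u^{2}$ are pinched between positive constants, hence so is $\det(\nabla^{2}u+uI)$, and the standard Monge-Amp\`ere regularity on $S^{n-1}$ (Caffarelli together with Schauder, bootstrapping with $f_{t}\in C^{\alpha}$) gives a uniform $C^{2,\alpha}$ bound, so a limit of solutions solves the limit equation. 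Thus $T=[0,1]$ and $1\in T$, which with the uniqueness above proves the theorem.

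\emph{Main obstacle.} The one non-routine idea is to compare two solutions — or a solution with a kernel element of the linearization — at a maximum of their ratio, and to observe that the powers of $m$ add up to exactly $q-p$; this is what both the uniqueness and the invertibility of the linearization rest on. The rest is standard: for $p>q>0$ the maximum principle hands over the $C^{0}$ bounds for free (which is exactly why both uniqueness and the continuity method can fail outside this range), and the higher regularity is the usual Monge-Amp\`ere theory.
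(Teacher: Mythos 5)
Your argument is correct and follows the maximum-principle approach the paper attributes to Huang and Zhao \cite{HuZ18}: comparing two solutions at the maximum of their ratio gives $m^{q-p}\ge 1$, hence $m\le 1$ since $q<p$, and the same sign condition supplies the two-sided $C^0$ bounds that close the continuity method. The paper only cites \cite{HuZ18} without reproducing a proof, so your write-up is essentially the standard argument behind that citation (with the implicit convention, which you use, that a solution is a support function so that $\nabla^2u+uI\ge0$ and hence positive determinant gives positive definiteness).
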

\noindent{\bf Remark. } If in addition, $p>\max\{q,1\}$, then even the solution $K\in\mathcal{K}_o^n$ of \eqref{Lp-dualMink-Monge-Ampere-measure-intro} in the sense of measure is unique, and hence $u=h_K|_{S^{n-1}}$ is the unique positive $C^{2,\alpha}$ solution of \eqref{Lp-dualMink-Monge-Ampere-intro} according to Remark~\ref{CpqK-bounded} (ii) and Lemma~\ref{Gauss-map-bijective}.\\

Therefore, the case $p<q$ is the interesting one concerning uniqueness in \eqref{Lp-dualMink-Monge-Ampere-intro}. According to Li, Liu, Lu \cite{LLL22}, uniqueness may not hold in \eqref{Lp-dualMink-Monge-Ampere-intro} if $p<0$ and $q=n$ possibly even if
$f$ and $u$ are positive even $C^\infty$ functions.
In this paper, we prove the following extension of Theorem~\ref{local-uniqueness-Sp}.

\begin{theo}
\label{local-uniqueness-Cpq}
For $n\geq 2$, $\alpha\in(0,1)$ and $p\in(-1,1)$, there exists a constant $\varepsilon_0\in(0,1)$ that depends only on $n$, $\alpha$ and $p$ such that if $|q-n|<\varepsilon_0$ and 
$f\in C^{\alpha}(S^{n-1})$ satisfies $\|f-1\|_{C^{\alpha}(S^{n-1})}<\varepsilon_0$, then the equation
$$
d\widetilde{C}_{p,q,K}=fd\mathcal{H}^{n-1}
$$
has a unique solution in ${\cal K}^n_o$ in the sense of  Alexandrov, and $u=h_K|_{S^{n-1}}$ is a positive $C^{2,\alpha}$ solution of \eqref{Lp-dualMink-Monge-Ampere-intro}.
\end{theo}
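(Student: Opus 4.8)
The plan is to combine an \emph{a priori} estimate — forcing every Alexandrov solution to be $C^{2,\alpha}$-close to the constant $1$ — with invertibility of the linearization of the Monge--Amp\`ere operator at $u\equiv 1$. Write $u=h_K|_{S^{n-1}}$ and set $F_q[u]=(\|\nabla u\|^2+u^2)^{\frac{q-n}2}u^{1-p}\det(\nabla^2u+u\,I)$, the left-hand side of \eqref{Lp-dualMink-Monge-Ampere-intro}; a classical positive solution is a $u>0$ with $\nabla^2u+u\,I>0$ and $F_q[u]=f$. I would prove the theorem in three steps: (i) an a priori pinching estimate for all Alexandrov solutions; (ii) invertibility of $DF_q[1]$; (iii) existence via the implicit function theorem and uniqueness via a perturbation argument on the difference of two solutions.

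\medskip\noindent\textbf{Step 1 (the a priori estimate).}
First I would establish a modulus $\omega$ with $\omega(t)\to 0$ as $t\to0^+$, depending only on $n,\alpha,p$, such that every $K\in\mathcal K^n_o$ satisfying $d\widetilde C_{p,q,K}=f\,d\HH^{n-1}$ with $|q-n|<\varepsilon_0$ and $\|f-1\|_{C^{\alpha}(S^{n-1})}<\varepsilon_0$ has $o\in{\rm int}\,K$ and $\|h_K-1\|_{C^0(S^{n-1})}\le\omega(\varepsilon_0)$. This must be done at the level of the measure $\widetilde C_{p,q,K}$, with no regularity of $K$ assumed: using the mass identity $\int_{S^{n-1}}f\,d\HH^{n-1}=\widetilde C_{p,q,K}(S^{n-1})$, two-sided geometric bounds for $\widetilde C_{q,K}$ in terms of $R=\max_{S^{n-1}}\varrho_K$ and $r=\min_{S^{n-1}}\varrho_K$, and the inequality $\langle x,\nu_K(x)\rangle=h_K(\nu_K(x))\le\varrho_K(v)$ at $x=\varrho_K(v)v$, one bounds $R$ from above and $r$ from below. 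The crucial point is that the weight $u^{-p}$ in $d\widetilde C_{p,q,K}=u^{-p}\,d\widetilde C_{q,K}$ enters the relevant estimates with the correct monotonicity precisely when $1+p>0$; this is where the hypothesis $p>-1$ is used, and I expect this $C^0$ bound to be the main obstacle — all the more so because, for $p\le-1$ and $q=n$, the analogous estimate provably fails (Li--Liu--Lu). Once $c_1\le u\le c_2$ with $c_1,c_2\to 1$, the equation $\det(\nabla^2u+u\,I)=f\,u^{p-1}(\|\nabla u\|^2+u^2)^{\frac{n-q}2}$ has right-hand side bounded between positive constants close to $1$, so Caffarelli's regularity theory gives $u\in C^{1,\gamma}$; the right-hand side is then $C^{\alpha}$, and Schauder (Evans--Krylov) estimates upgrade this to $u\in C^{2,\alpha}$. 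Keeping track of the dependence on $\varepsilon_0$ yields $\|u-1\|_{C^{2,\alpha}(S^{n-1})}\le\omega_1(\varepsilon_0)$ with $\omega_1(\varepsilon_0)\to 0$; in particular $K$ has $C^{2,\alpha}$ boundary, $o\in{\rm int}\,K$, and $u$ is a classical positive solution of \eqref{Lp-dualMink-Monge-Ampere-intro}.

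\medskip\noindent\textbf{Step 2 (the linearized operator).}
A direct computation of the Fr\'echet derivative at $u\equiv 1$ gives $DF_q[1]\,v=\Delta v+(q-p)\,v$, where $\Delta$ is the Laplace--Beltrami operator on $S^{n-1}$: indeed $\det(\nabla^2(1+tv)+(1+tv)I)$ contributes $\Delta v+(n-1)v$, the factor $u^{1-p}$ contributes $(1-p)v$, and $(\|\nabla u\|^2+u^2)^{\frac{q-n}2}$ contributes $(q-n)v$. Since $p\in(-1,1)$ is fixed, choosing $\varepsilon_0$ small (depending on $p$) forces $q-p\in(n-1,2n)$, an open interval containing none of the eigenvalues $k(k+n-2)$, $k\ge 0$, of $-\Delta$ on $S^{n-1}$; hence $\mathcal L_0:=\Delta+(q-p)I$ is an isomorphism $C^{2,\alpha}(S^{n-1})\to C^{\alpha}(S^{n-1})$, with inverse bounded uniformly for $q$ near $n$. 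This is the structural reason the range $-1<p<1$ works.

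\medskip\noindent\textbf{Step 3 (existence and uniqueness).}
For existence I would apply the implicit function theorem to $(u,q,f)\mapsto F_q[u]-f$ near $(1,n,1)$: the $u$-derivative there is the isomorphism $\Delta+(n-p)I$ of Step 2, so for $(q,f)$ close to $(n,1)$ one obtains a unique $u\in C^{2,\alpha}(S^{n-1})$ close to $1$ with $F_q[u]=f$; being $C^{2,\alpha}$-close to $1$, $u$ is the support function of a body $K\in\mathcal K^n_o$ with $o\in{\rm int}\,K$, and by the equivalence of classical and Alexandrov solutions for $p<1$ with $o\in{\rm int}\,K$ (Lemma~\ref{Gauss-map-bijective}) it solves $d\widetilde C_{p,q,K}=f\,d\HH^{n-1}$. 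For uniqueness, let $K_1,K_2\in\mathcal K^n_o$ be any two Alexandrov solutions; by Step 1 their support functions $u_1,u_2$ are $C^{2,\alpha}$ and $\omega_1(\varepsilon_0)$-close to $1$. Setting $u_t=tu_1+(1-t)u_2$ and $w=u_1-u_2$, subtracting $F_q[u_1]=f=F_q[u_2]$ and writing $F_q[u_1]-F_q[u_2]=\bigl(\int_0^1 DF_q[u_t]\,dt\bigr)w$ yields a linear second-order elliptic equation $\mathcal L w=0$ whose coefficients are $C^{\alpha}$-close to those of $\mathcal L_0$, the smallness being controlled by $\omega_1(\varepsilon_0)$. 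Since $\mathcal L_0$ is an isomorphism, for $\varepsilon_0$ small the perturbed operator $\mathcal L$ is still injective on $C^{2,\alpha}(S^{n-1})$, hence $w\equiv 0$ and $K_1=K_2$. In particular the Alexandrov solution is the one produced by the implicit function theorem, and its support function is a positive $C^{2,\alpha}$ solution of \eqref{Lp-dualMink-Monge-Ampere-intro}, which is what had to be shown.
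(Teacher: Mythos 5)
Your Steps 2 and 3 align closely with the paper's scheme: the paper also defines $F(q,u)=\bigl(q,(\|\nabla u\|^2+u^2)^{(q-n)/2}u^{1-p}\det(\nabla^2u+uI)\bigr)$, computes the linearization at $(n,1)$ to be $\phi\mapsto\Delta\phi+(n-p)\phi$, notes this is invertible since the eigenvalues of $-\Delta$ are $k(k+n-2)$, and applies the inverse function theorem to get a neighborhood $\mathcal N$ of $(n,1)$ on which $F$ is injective. (One small remark on Step~2: the spectral gap $(n-1,2n)$ actually accommodates $p\in(-n,1)$, not merely $(-1,1)$; the narrower range is dictated entirely by the a priori estimate, not by the linearization.)

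The gap is in Step 1, and it is a real one. You propose to prove \emph{directly}, from the mass identity $\widetilde C_{p,q,K}(S^{n-1})=\int f$ together with pointwise two-sided bounds for $\widetilde C_{q,K}$ in terms of $R=\max\varrho_K$ and $r=\min\varrho_K$, a quantitative pinching $\|h_K-1\|_{C^0}\le\omega(\varepsilon_0)$ with $\omega(\varepsilon_0)\to 0$. That is considerably stronger than anything those elementary ingredients yield: the mass identity is compatible with arbitrarily eccentric bodies, and using the pointwise measure bounds to rule these out is precisely the hard content of Theorem~\ref{qcloseton-dualcurvcloseto1} in the paper, which is \emph{not} a pinching estimate but only a uniform diameter bound and volume lower bound with a fixed constant $C$ (not tending to $1$). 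Even this weaker estimate requires a genuine blow-up argument: passing to the KLS/John ellipsoid, rescaling by $\Psi_m$, controlling the integrand via Lemma~\ref{scalar-top-onsphere} and dominated convergence, and invoking the non-existence result Theorem~\ref{con-volume-nonexistence} for the limit. The closeness to $1$ is then obtained \emph{indirectly}: the paper argues by contradiction, extracts a convergent subsequence $K_m\to K_\infty$ via Blaschke selection using the bounds of Theorem~\ref{qcloseton-dualcurvcloseto1}, proves a nontrivial weak convergence result (Lemma~\ref{Cpq-weak-convergence-qton}) to the effect that $\widetilde C_{p,q_m,K_m}\rightharpoonup S_{p,K_\infty}$ as $q_m\to n$ --- so the measure type itself changes in the limit --- concludes $S_{p,K_\infty}=\mathcal H^{n-1}$, and only then applies the Brendle--Choi--Daskalopoulos uniqueness theorem to identify $K_\infty$ with the unit ball. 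Your proposal uses none of these ingredients and offers no substitute for them, so the claimed quantitative $C^0$ estimate is unjustified. (A minor bibliographic slip: the sharpness of the $p>-1$ threshold for the $C^0$ estimate at $q=n$ is due to Jian--Lu--Wang, not Li--Liu--Lu; the latter reference concerns non-uniqueness.)
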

\noindent{\bf Remark.} Even if $q=n$ (and hence $\widetilde{C}_{p,q,K}=S_{p,K}$),  Theorem~\ref{local-uniqueness-Cpq} includes the case $p\in(-1,0)$ left open by Theorem~\ref{local-uniqueness-Sp}. If $n=2$, then the extention of Theorem~\ref{local-uniqueness-Sp} to the case $p\in(-1,0)$ has been already verified by Du \cite{Du21}. \\

We note that the $C^0$ estimate of Theorem~\ref{qcloseton-dualcurvcloseto1}~ (b) - that is a strengthened version  of the $C^0$ estimate Theorem~\ref{qcloseton-dualcurvcloseto1}~ (a) we use to prove Theorem~\ref{local-uniqueness-Cpq}, - does not hold if $-n<p\leq -1$ according to Jian, Lu, Wang \cite{JLW15}. Therefore, we propose the following question.

\begin{problem}
\label{local-uniqueness-Sp-psmall}
For $n\geq 2$, does Theorem~\ref{local-uniqueness-Sp} hold if $-n<p\leq -1$?
\end{problem}

Concerning the even case, Chen, Huang, Zhao \cite{CHZ19} proved that if $p\geq -n$ and $q\leq \min\{n,n+p\}$ with $q\neq p$, then the even solution of the isotropic $L_p$ $q$th dual Minkowski problem \eqref{Lp-dualMink-Monge-Ampere-intro} is unique.
This result has been extended to the case  when $p\geq -n$ and $q\leq n$, and one of these inequalities is strict by Ivaki, Milman \cite{IvM23}.

\begin{theo}
\label{local-uniqueness-even-Cpq}
For $n\geq 2$ and $\alpha\in(0,1)$, and for $p>-1$ and $q>0$ with $p<q<\min\{n,n+p\}$,  there exists a constant $\varepsilon_0\in(0,1)$ that depends on $n$, $\alpha$, $p$ and $q$ such that if 
$f\in C^{\alpha}(S^{n-1})$ is even and satisfies $\|f-1\|_{C^{\alpha}(S^{n-1})}<\varepsilon_0$, then the equation
$$
d\widetilde{C}_{p,q,K}=fd\mathcal{H}^{n-1}
$$
has a unique $o$-symmetric solution in ${\cal K}^n_o$ in the sense of  Alexandrov, and  $u=h_K|_{S^{n-1}}$ 
is the unique even solution of \eqref{Lp-dualMink-Monge-Ampere-intro} that 
is in addition, positive and $C^{2,\alpha}$.
\end{theo}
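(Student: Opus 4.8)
The plan is to follow the scheme behind Theorem~\ref{local-uniqueness-Sp}: pair a uniform a priori estimate for $o$-symmetric solutions in the near-isotropic regime with an implicit function theorem argument anchored at the ball, and glue the resulting local uniqueness to a global one by a compactness argument that feeds on the uniqueness in the \emph{isotropic} even $L_p$ dual Minkowski problem. Unlike Theorem~\ref{local-uniqueness-Cpq}, the present statement does not put $q$ near $n$; it only uses the subcriticality $p<q<\min\{n,n+p\}$, which is exactly the range in which Chen, Huang, Zhao \cite{CHZ19} (see also Ivaki, Milman \cite{IvM23}) show that the centered unit ball $B^n$ is the \emph{unique} $o$-symmetric solution of the isotropic problem $d\widetilde C_{p,q,K}=d\mathcal H^{n-1}$ (here $p>-n$, $q\le\min\{n,n+p\}$ and $q\ne p$ all hold); and for $f\equiv1$ the ball $B^n$ is a solution, since $\varrho_{B^n}\equiv h_{B^n}\equiv1$ gives $d\widetilde C_{p,q,B^n}=d\mathcal H^{n-1}$.

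The first substantial ingredient is the a priori estimate: there are $\varepsilon_1\in(0,1)$ and $C>1$ depending only on $n,\alpha,p,q$ such that every $o$-symmetric $K\in\mathcal K^n_o$ with $d\widetilde C_{p,q,K}=f\,d\mathcal H^{n-1}$ in Alexandrov's sense and $\|f-1\|_{C^\alpha(S^{n-1})}<\varepsilon_1$ satisfies $C^{-1}\le h_K\le C$ on $S^{n-1}$. Note that $o$-symmetry already forces $o\in\operatorname{int}K$ (a full-dimensional symmetric body contains its centre of symmetry in the interior), hence $h_K>0$; the content of the estimate is that $K$ can neither collapse towards nor spread away from a proper subspace, and this is precisely where $p<q<\min\{n,n+p\}$ is used, through the ``no subspace mass concentration'' mechanism underlying the isotropic uniqueness results of \cite{CHZ19,IvM23} (testing $\widetilde C_{p,q,K}$ against caps around a would-be degenerating subspace and bounding the mass so produced via $q<\min\{n,n+p\}$). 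Once $C^{-1}\le h_K\le C$, the distance from $o$ to each boundary point of $K$ lies in $[C^{-1},C]$, so $\|\nabla h_K\|^2+h_K^2$ and hence $\det(\nabla^2h_K+h_KI)$ are pinched between positive constants; Caffarelli's regularity theory together with Schauder estimates then yield $\|h_K\|_{C^{2,\alpha}(S^{n-1})}\le C'$, so that $h_K$ is a positive $C^{2,\alpha}$ solution of \eqref{Lp-dualMink-Monge-Ampere-intro}.

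Next, linearize. Writing $u=1+v$ with $v$ even, at $u\equiv1$ the three factors $(\|\nabla u\|^2+u^2)^{(q-n)/2}$, $u^{1-p}$, $\det(\nabla^2u+uI)$ equal $1$, with linearizations $(q-n)v$, $(1-p)v$, $\Delta_{S^{n-1}}v+(n-1)v$, so the linearized operator is
\[
L_0v=\Delta_{S^{n-1}}v+(q-p)v .
\]
On the space of even functions $L_0$ has trivial kernel: a kernel element would be an even spherical harmonic of some even degree $k$ with $k(k+n-2)=q-p$, but $q>p$ excludes $k=0$, while for even $k\ge2$ one has $k(k+n-2)\ge2n>n>q-p$ by $q<\min\{n,n+p\}$. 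Being self-adjoint elliptic of Fredholm index $0$, $L_0$ is thus an isomorphism between the even parts of $C^{2,\gamma}(S^{n-1})$ and $C^{\gamma}(S^{n-1})$ for every $\gamma\in(0,1)$; here evenness is indispensable, since for general functions the degree-one eigenvalue $n-1$ lies in $(0,n)$ and could equal $q-p$. Hence $\mathcal F(u,f):=(\|\nabla u\|^2+u^2)^{(q-n)/2}u^{1-p}\det(\nabla^2u+uI)-f$ has $\mathcal F(1,1)=0$ and $D_u\mathcal F(1,1)=L_0$ invertible on even spaces, and the implicit function theorem, applied in the even parts of $C^{2,\beta}$ and $C^{\beta}$ for a fixed $\beta\in(0,\alpha)$, produces $\delta\in(0,\varepsilon_1)$ and $\eta>0$ so that for every even $f$ with $\|f-1\|_{C^\alpha}<\delta$ there is a unique even $u$ with $\|u-1\|_{C^{2,\beta}}<\eta$ solving \eqref{Lp-dualMink-Monge-Ampere-intro}; in particular a solution exists, and since $f\in C^\alpha$ it is $C^{2,\alpha}$ by the regularity of the previous step.

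Finally, glue. If the conclusion failed for every $\varepsilon_0$, there would be even $f_j\to1$ in $C^\alpha$ and $o$-symmetric Alexandrov solutions $K_j$ with $\|h_{K_j}-1\|_{C^{2,\beta}}\ge\eta$ for large $j$. By the a priori estimate $\{h_{K_j}\}$ is bounded in $C^{2,\alpha}$ and pinched between positive constants, so a subsequence converges in $C^{2,\beta}$ to $h_{K_\infty}$ with $K_\infty$ an $o$-symmetric body solving the isotropic problem, whence $K_\infty=B^n$; as every subsequence has such a further subsequence, $\|h_{K_j}-1\|_{C^{2,\beta}}\to0$, a contradiction. Thus $\varepsilon_0=\delta$ works: the implicit function solution is then the unique $o$-symmetric Alexandrov solution, and it is positive and, since $f\in C^\alpha$, of class $C^{2,\alpha}$. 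The linearization and implicit function steps are routine once the spectral gap on even functions is observed; the genuine obstacle, and the only place the precise hypotheses $p>-1$ and $q<\min\{n,n+p\}$ are consumed, is the uniform non-collapsing $C^0$ estimate of the second paragraph, which cannot simply be quoted from Theorem~\ref{qcloseton-dualcurvcloseto1} (tailored to $q$ near $n$ rather than to a full subcritical interval) and must instead be proved directly, along the lines of \cite{CHZ19,IvM23}.
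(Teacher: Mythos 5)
Your scheme is essentially the paper's: a uniform $C^{0}$ bound for $o$-symmetric Alexandrov solutions when $f$ is $C^{\alpha}$-close to $1$, bootstrapped to $C^{2,\alpha}$ by Caffarelli/Schauder, an implicit-function-theorem argument at $u_{0}\equiv 1$ with linearization $L_{0}=\Delta_{S^{n-1}}+(q-p)I$ which is invertible on even functions because $0<q-p<n<2n=\lambda_{2}$ (your spectral computation is correct and is a cleaner rendering of what the paper sketches), and a compactness ``glue'' that reduces to the uniqueness of the $o$-symmetric solution of the isotropic problem from Chen--Huang--Zhao \cite{CHZ19}. The one ingredient you left as a sketch is exactly the paper's Theorem~\ref{lambda-Cpq-symmetric}: it asserts precisely the diameter/volume bounds you need for $o$-symmetric $K$ with $\lambda^{-1}\mathcal H^{n-1}\le\widetilde C_{p,q,K}\le\lambda\mathcal H^{n-1}$, for every fixed $-1<p<q<\min\{n,n+p\}$, $q>0$, and the paper's proof of Theorem~\ref{local-uniqueness-even-Cpq} just invokes it to extract a convergent subsequence. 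That estimate is not proved ``along the lines of'' the maximum-principle/centro-affine arguments in \cite{CHZ19,IvM23}; it is a compactness argument built on the John/KLS ellipsoid axes $r_{1}\le\dots\le r_{n}$, the pinched product $r_{n}^{q-n-p}\,r_{1}\cdots r_{n}\approx 1$ of Proposition~\ref{rpqnvol}, the negative-power integrals $\int_{\Gamma_{\xi}}h_{E}^{-s}$ of Lemma~\ref{qth-intrvol-ellipsoid}, and the core identity \eqref{pq-core-estimate-symmetric}, and it is precisely where $p>-1$ and $q<\min\{n,n+p\}$ are spent. Your heuristic (``test $\widetilde C_{p,q,K}$ against caps around a degenerating subspace'') captures the right mechanism, but as written it is a plan rather than a proof, so a complete solution has to supply that estimate in full.
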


If $\max\{0,n-4\}<q<n$ and $p=0$ (the case of $\widetilde{C}_{q,K}=\widetilde{C}_{0,q,K}$), then J. Hu \cite{JinrongHu} proved Theorem~\ref{local-uniqueness-even-Cpq} in a stronger form; namely, J. Hu \cite{JinrongHu}  proved a stability stament in terms of the $L_2$ distance. In addition, Cabezas-Moreno, Hu \cite{CMH} prove Theorem~\ref{local-uniqueness-even-Cpq} if $1<p<q\leq n$.

We note that question of uniqueness of the solution has been discussed in various versions of the Minkowski problem.
The Gaussian surface area measure of a $K\in\mathcal{K}^n$ is defined by
Huang, Xi and Zhao \cite{HXZ21}, whose results are extended  by Feng, Liu, Xu \cite{FLX23}, Liu \cite{Liu22} and Feng, Hu, Xu \cite{FHX23}. The fact that only balls have the property that the density of the Gaussian surface area measure is constant 
is proved by Chen, Hu, Liu, Zhao \cite{CHLZ} for convex domains in $\R^2$, and in the even case for $n\geq 3$
by \cite{CHLZ} and Ivaki, Milman \cite{IvM23}. The intensively investigated $L_p$-Minkowski conjecture
stated by B\"or\"oczky, Lutwak, Yang,  Zhang \cite{BLYZ12}
claims the uniqueness of the even solution of \eqref{LpMonge} for even positive $f$,
see for example
B\"or\"oczky, Kalantzopoulos \cite{BoK22},
Chen, Huang, Li,  Liu \cite{CHLL20},
Colesanti,  Livshyts, Marsiglietti \cite{CLM17},
Colesanti,  Livshyts \cite{CoL20},
Ivaki, E. Milman \cite{IvM23b}, 
Kolesnikov \cite{Kol20},
Kolesnikov, Livshyts \cite{KoL},
Kolesnikov, Milman \cite{KoM22},
Livshyts, Marsiglietti, Nayar, Zvavitch \cite{LMNZ20},
Milman \cite{Mil24,Mil25}, 
Saroglou \cite{Sar15,Sar16},
Stancu \cite{Stancu,Stancu1,Sta22},
van Handel \cite{vHa} 
for partial results, and B\"or\"oczky \cite{Bor23} for a survey of the subject.

For the $L_p$ $q$th dual Minkowski problem
due to Lutwak, Yang, Zhang \cite{LYZ18} (see Huang, Lutwak, Yang, Zhang \cite{HLYZ16}
for the original $q$th dual Minkowski problem), Chen, Li \cite{ChL21} and
Lu, Pu \cite{LuP21} solve essentially the case $p>0$, and Huang, Zhao \cite{HuZ18} and Guang, Li, Wang \cite{LGW23} discuss the case $p<0$ (see Gardner, Hug, Weil, Xing, Ye \cite{GHWXY19,GHXY20} for Orlicz versions of some of these results using the variational method, and Li, Sheng, Wang \cite{LSW20} for an approach via the flow method). Uniqueness of the solution of the $L_p$ $q$th  dual Minkowski problem is thoroughly investigated by Li, Liu, Lu \cite{LLL22}. The case when $n=2$ and $f$ is a constant function has been completely clarified by
Li,  Wan \cite{LiW}. If the $L_p$ $q$th dual curvature is constant, then Ivaki, Milman \cite{IvM23} shows uniqueness of the even solution when $p>-n$ and $q\leq n$.\\

Let us turn to the $C^0$ estimates related to Theorem~\ref{local-uniqueness-Cpq} and 
Theorem~\ref{local-uniqueness-even-Cpq}. We only consider the case when $q$ is close to $n$, but these estimates are sufficient to prove Theorem~\ref{local-uniqueness-Cpq}.
	
\begin{theo}
\label{qcloseton-dualcurvcloseto1}
For any $n\geq 3$, $\lambda\geq 2$ and $-1<p<1$, there exist $\varepsilon_0\in(0,1)$ and $C>1$ depending on $n$, $\lambda$ and $p$ such that if $n-\varepsilon_0<q< n+\varepsilon_0$ and a convex body $K\in \mathcal{K}^n_o$ satisfies that
\begin{description}
\item{(a)} either $(1-\varepsilon_0)\mathcal{H}^{n-1}\leq \widetilde{C}_{p,q,K}\leq (1+\varepsilon_0)\mathcal{H}^{n-1}$,
\item{(b)} or $n=3,4$ and $\lambda^{-1}\mathcal{H}^{n-1}\leq \widetilde{C}_{p,q,K}\leq\lambda\mathcal{H}^{n-1}$,
\end{description}
then ${\rm diam}\,K\leq C$ and $|K|\geq C^{-1}$.
\end{theo}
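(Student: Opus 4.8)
The plan is to bound the diameter of $K$ from above; once ${\rm diam}\,K\le C$ is established, the lower bound $|K|\ge C^{-1}$ follows by a standard argument, since a convex body of bounded diameter with vanishingly small volume is contained in a thin slab, and then the total mass $\widetilde{C}_{p,q,K}(S^{n-1})$ — which is comparable to a constant by hypothesis (a) or (b) — would have to be small, a contradiction; one quantifies this by noting that $\widetilde{C}_{p,q,K}(S^{n-1}) = \int_{S^{n-1}} h_K^{-p}\,d\widetilde{C}_{q,K}$ and that $\widetilde{C}_{q,K}(S^{n-1}) = \int_{S^{n-1}}\varrho_K^q\,d\mathcal H^{n-1}$ is small when $K$ is thin (using $q>0$ and the diameter bound to control $h_K^{-p}$, splitting into the regions where $h_K$ is large or small and using $p<1$ on the small part as in the mass estimates of Lemma~\ref{CpqK-bounded}).

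For the diameter bound I would argue by contradiction: suppose there is a sequence $K_j\in\mathcal K^n_o$ with ${\rm diam}\,K_j\to\infty$, $q_j\to n$, and $\widetilde{C}_{p,q_j,K_j}$ trapped between the stated multiples of $\mathcal H^{n-1}$. After a rotation assume the longest direction is $e_n$, so $K_j$ contains a point at height roughly $R_j:={\rm diam}\,K_j$ and $o\in K_j$. Write $u_j=h_{K_j}$. The key is a lower bound on $h_{K_j}$ on a substantial portion of $S^{n-1}$: since $K_j$ is "long" in direction $e_n$, $h_{K_j}(w)$ is large (of order $R_j$) for all $w$ in a spherical cap around $e_n$ (or around $-e_n$, depending on where the far point sits relative to $o$); more precisely, for $w$ with $\langle w,e_n\rangle\ge c$ one has $h_{K_j}(w)\ge cR_j$. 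On this cap the density of $\widetilde C_{p,q_j,K_j}$ is $u_j^{-p}$ times the density of $\widetilde C_{q_j,K_j}$. If $p\ge 0$ this already forces the $\widetilde C_{q_j,K_j}$-density there to be at most $C R_j^{p}$, hence bounded since $p<1$... but this alone does not yield a contradiction, so one must extract more.

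The heart of the matter — and the step I expect to be the main obstacle — is to convert the largeness of $R_j$ into a contradiction with the two-sided mass pinching, and this is where the restriction $-1<p<1$ and the near-dual ($q\approx n$) regime must be used, together with $n=3,4$ in case (b). The natural route is to test the Monge–Ampère equation \eqref{Lp-dualMink-Monge-Ampere-intro} against a well-chosen quantity, or equivalently to use the mixed-volume / integral identity $\int_{S^{n-1}} h_L\,d\widetilde C_{p,q,K} = (\text{comparable to a mixed volume of }K,L)$ for a cleverly chosen body $L$: taking $L$ to be a segment or a thin cylinder in the long direction $e_n$, the left side is $\gtrsim R_j\cdot \widetilde C_{p,q_j,K_j}(\text{cap})\gtrsim R_j$ (using $\widetilde C_{p,q_j,K_j}\ge (1-\varepsilon_0)\mathcal H^{n-1}$ from (a), or $\ge\lambda^{-1}\mathcal H^{n-1}$ from (b), so the cap has definite mass), while the right side is a dual mixed volume of the form $\widetilde V_{q_j}(K_j,\dots,K_j,L)$ which, because $q_j<n$ is below the dimension and $L$ is low-dimensional, can be bounded \emph{independently of} $R_j$ by the volume-type quantity $|K_j|^{(q_j-1)/q_j}$ or similar; the upper pinching $\widetilde C_{p,q_j,K_j}\le C\mathcal H^{n-1}$ keeps $|K_j|$ and all relevant integrals of $\varrho_{K_j}$ controlled. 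Making this dimensional gain precise — i.e. showing the right-hand dual mixed volume does not grow with the diameter when $q<n$, for which $q$ being close to $n$ gives only a small margin and forces the $n=3,4$ hypothesis in the larger-perturbation case (b) — is the delicate point; it is exactly the phenomenon that fails for $p\le-1$, where the weight $h_K^{-p}$ grows too fast on the long cap and the left side can no longer be bounded below, matching the Jian–Lu–Wang counterexamples cited after the theorem. Once this contradiction is in hand, ${\rm diam}\,K_j$ is bounded, and the volume lower bound follows as above.
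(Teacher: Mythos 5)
Your proposal correctly sets up the compactness/contradiction framework and correctly locates the diameter bound as the heart of the matter, but the route you sketch for the contradiction --- pairing the measure against $h_L$ for a segment or thin cylinder $L$ and comparing to a dual mixed volume with a ``dimensional gain'' --- is left entirely open (you flag it yourself as ``the step I expect to be the main obstacle''), and it is not how the paper closes the argument. After passing to a subsequence and normalizing $K_m$ by the diagonal map sending the origin-centered John/KLS ellipsoid of $K_m$ to a ball, the paper takes a limit body $K_\infty$ and shows, by a dominated-convergence analysis that uses $q_m\to n$ via Corollary~\ref{htodelta-bound} and uses $\lvert p\rvert<1$ via Lemma~\ref{scalar-top-onsphere}, that the rescaled $L_p$ surface area measure $S_{p,K_\infty}$ is concentrated on a lower-dimensional subsphere $S^{n-1}\cap L$ with density a constant multiple of $\mathcal H^{k-1}$ in case~(a), or a density pinched between two constants in case~(b); the reduction to $k\le n-1$ (not all ellipsoid axes comparable) comes from the total-mass bound in Lemma~\ref{kn-large-CpqS}. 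The contradiction then comes from the imported non-existence result, Theorem~\ref{con-volume-nonexistence}: part (ii), valid for all $n\ge 3$, rules out exactly constant density and gives case~(a), while part (i), which holds only for $n=3,4$, rules out densities bounded away from zero and gives case~(b). This non-existence theorem is the real engine of the proof, and your sketch never invokes anything of the sort; in particular, your explanation that the $n=3,4$ restriction arises because ``$q$ close to $n$ gives only a small margin'' is not the actual mechanism --- it is solely because part (i) of the non-existence theorem is restricted to $n=3,4$.

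Two further remarks. First, the ``mixed-volume identity'' $\int_{S^{n-1}} h_L\,d\widetilde C_{p,q,K}$ does not, for $p\ne 0$, represent a first variation of a dual volume in any way that yields a clean upper bound independent of $\mathrm{diam}\,K$; the variational formula for $\widetilde C_{q,K}$ concerns $q$th dual volumes of Wulff shapes, and the extra weight $h_K^{-p}$ destroys the mixed-volume interpretation, so the estimate you would need is not available off the shelf. Second, your argument for the volume lower bound given a diameter bound is roughly in line with the paper's (pass to a degenerate limit $C\subset w^\perp$, and observe the contradiction with $\int_{S^{n-1}} h_C^{1-\tilde p}\,dS_C = 0$ since $S_C$ lives on $\{\pm w\}$ where $h_C=0$), though the paper's version is carried out via weak convergence of surface area measures rather than a slab estimate.
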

\noindent{\bf Remarks.} 
\begin{itemize}
\item The case $p=0$ and $n-\varepsilon_0<q< n+\varepsilon_0$ in the results above deals with the $q$th dual curvature measure $\widetilde{C}_{q,K}$, and the case $q=n$ and $-1<p<1$  deals with the case of Lutwak's $L_p$ surface area $\widetilde{C}_{p,n,K}=S_{p,K}$. Our results are new even for $S_{p,K}$ if $-1<p<0$.

\item For any $p\in(-n,-1)$ and $n\geq 3$, Jian, Lu, Wang \cite{JLW15} (see (2.12), (2.13) and (2.14) in \cite{JLW15}) prove the existence of a $\lambda>1$ depending on $p$ and $n$ such that there exists origin symmetric convex body $K \subset\R^n$ with axial rotational symmetry and $C^\infty_+$ boundary satisfying that
$$
\frac1{\lambda}\cdot\mathcal{H}^{n-1}\leq S_{p,K}\leq \lambda\cdot \mathcal{H}^{n-1},
$$
and $|K|$ is arbitrary small; namely, for any $\varepsilon>0$ there exists such a $K$ with $|K|<\varepsilon$. This shows that the lower bound $-1$ for $p$ in the results above is optimal.

\end{itemize}

Finally, we provide the $C^0$ estimate for $o$-symmetric convex bodies that is needed for the proof of Theorem~\ref{local-uniqueness-even-Cpq}.

\begin{theo}
\label{lambda-Cpq-symmetric}
For  $\lambda>1$, $-1<p<q<\min\{n,n+p\}$ with $q>0$, 
there exists $C>1$ depending on $n$, $\lambda$, $p$ and $q$, such that if an $o$-symmetric convex body $K\subset\R^n$ satisfies that  $\lambda^{-1}\mathcal{H}^{n-1}\leq \widetilde{C}_{p,q,K}\leq\lambda\mathcal{H}^{n-1}$,
then 
$$
{\rm diam}\,K\leq C\mbox{ \ and \ }|K|\geq 1/C.
$$
\end{theo}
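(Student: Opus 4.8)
I would argue by contradiction and compactness, reducing everything to a degeneration analysis of the John ellipsoid. Since $K$ is $o$-symmetric we may assume, after a rotation, that $E\subseteq K\subseteq\sqrt n\,E$ with $E=\{x\in\R^n:\sum_i x_i^2/a_i^2\le1\}$ and $a_1\ge a_2\ge\cdots\ge a_n>0$, so that ${\rm diam}\,K\asymp a_1$ and $|K|\asymp a_1\cdots a_n$ with constants depending only on $n$; it therefore suffices to bound $a_1$ from above and $a_n$ from below. Writing $u_K(v)=\nu_K(\varrho_K(v)\,v)$ for the radial Gauss map, the substitution $x=\varrho_K(v)\,v$ rewrites the $L_p$ dual curvature measure as
$$
\widetilde C_{p,q,K}(\omega)=\int_{\{v\in S^{n-1}:\,u_K(v)\in\omega\}}\varrho_K(v)^{\,q-p}\,\langle v,u_K(v)\rangle^{-p}\,d\mathcal H^{n-1}(v),
$$
all quantities being defined $\mathcal H^{n-1}$-a.e.\ since $o\in{\rm int}\,K$; in particular $\widetilde C_{p,q,tK}=t^{\,q-p}\,\widetilde C_{p,q,K}$ for $t>0$. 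Because $-1<p<q<\min\{n,n+p\}$ and $q>0$, both exponents $q$ and $q-p$ lie in $(0,n)$; and since $d\widetilde C_{p,q,K}=h_K(u)^{1-p}\,|x_K(u)|^{\,q-n}\,dS_K(u)$, where $x_K(u)$ denotes a boundary point with outer normal $u$ and $h_K,|x_K|$ are bounded away from $0$ and $\infty$, the hypothesis $\widetilde C_{p,q,K}\le\lambda\mathcal H^{n-1}$ forces $S_K$ to be non-atomic --- i.e.\ $K$ has no facets --- which keeps the radial--Gauss change of variables legitimate below.

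Now suppose, for contradiction, that there is a sequence of admissible bodies along which $a_1\to\infty$ or $a_n/a_1\to0$. After passing to a subsequence and iterating over the gaps in the sequence $1=a_1/a_1\ge\cdots\ge a_n/a_1$, one is reduced --- by induction on the dimension and on the number of distinct scales present --- to the model situation of a coordinate split $\R^n=L\oplus L^\perp$ with $\dim L=k\in\{1,\dots,n-1\}$, $m:=n-k$, in which $K$ is uniformly comparable to the Cartesian product of a convex body of size $\asymp a_1$ in $L$ with the centred ball of radius $\eta:=a_{k+1}$ in $L^\perp$, where $\eta/a_1\to0$. I would then estimate the integral above on several families of test sets; schematically, up to constants depending only on $n,\lambda,p,q$ and a fixed cap radius,
\begin{itemize}
\item on $S^{n-1}$: $\ \widetilde C_{p,q,K}(S^{n-1})\asymp a_1^{\,q-p-\min\{m,\,q-p\}}\,\eta^{\,\min\{m,\,q-p\}}$ (times $\log(a_1/\eta)$ when $m=q$);
\item on a fixed small cap $\omega$ about a unit normal of the ``thick'' part (a direction of $L$): $\ \widetilde C_{p,q,K}(\omega)\asymp a_1^{\,q-p-m}\,\eta^{\,m}\,|\omega|$;
\item on a fixed small cap $\omega'$ about a unit normal of the ``thin'' part (a direction of $L^\perp$): $\ \widetilde C_{p,q,K}(\omega')\asymp a_1^{\,q-m}\,\eta^{\,m-p}\,|\omega'|$, the factor $\eta^{-p}$ coming from $\langle v,u_K(v)\rangle\asymp\eta/a_1$ on the relevant set.
\end{itemize}
In the essential ``needle'' regime $\eta\asymp1$, $a_1\to\infty$, these reduce to $a_1^{\,q-p-\min\{m,q-p\}}$, $a_1^{\,q-p-m}$ and $a_1^{\,q-m}$, and because $-1<p<q<\min\{n,n+p\}$, $q>0$, one of the three exponents always has the sign needed to violate $\lambda^{-1}\mathcal H^{n-1}\le\widetilde C_{p,q,K}\le\lambda\mathcal H^{n-1}$: the first when $m<q-p$ (it blows up), the second when $m>q-p$ (it vanishes), and the remaining borderline $m=q-p$ is removed by the third, since $a_1^{\,q-m}=a_1^{\,p}\not\to1$ unless $p=0$, the case $p=0$ (then $m=q$) being killed by the logarithm in the first line. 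The remaining ``flat'' regime ($a_1$ bounded, $\eta\to0$) is excluded by the first line, which then tends to $0$ and contradicts the lower bound. By the Jian--Lu--Wang example \cite{JLW15} no argument valid for $p\le-1$ can exist, so this case division is necessarily delicate. Hence no degeneration occurs: all $a_i/a_1$ stay bounded away from $0$, so $\widetilde C_{p,q,K}(S^{n-1})\asymp a_1^{\,q-p}$, and the two-sided bound pins $a_1$ between two positive constants; then $a_n\asymp a_1$ is bounded below too, and ${\rm diam}\,K\le C$, $|K|\ge1/C$ follow.

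The main difficulty lies entirely in the second step: first, organizing the multiscale block decomposition of the John ellipsoid so that the finitely many degeneration patterns are covered uniformly; and, more seriously, establishing the schematic estimates above \emph{rigorously}, i.e.\ pinning down the exact power of the thinness $\eta/a_1$ with which $\widetilde C_{p,q,K}$ ``descends'' to $L$ on each test family. The needle regime $k<n-q$ is the genuinely hard one, since there the total mass stays bounded and the contradiction has to be extracted from a cap estimate together with the precise coupling of $\varrho_K^{\,q-p}$ with the weight $\langle v,u_K(v)\rangle^{-p}$; the non-atomicity of $S_K$ furnished by the upper bound on $\widetilde C_{p,q,K}$ is invoked repeatedly to justify the radial representation and to rule out pathological concentration of the surface-area measure near the degenerating part of $\partial K$.
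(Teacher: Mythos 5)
Your proposal shares the broad strategy of the paper's proof (John/KLS ellipsoid, passing to a degenerating subsequence, contradiction), but the route you take afterwards is different from the paper's and, as you yourself stress, the load-bearing estimates are left unverified. Beyond the acknowledged gap, some of the schematic estimates appear to be incorrect: for example, for the total mass you claim $\widetilde C_{p,q,K}(S^{n-1})\asymp a_1^{\,q-p-\min\{m,\,q-p\}}\eta^{\,\min\{m,\,q-p\}}$, but a direct computation on a pancake-like body in $\R^3$ with $k=2$, $m=1$, $q=2$, $p=\tfrac12$, $\eta\asymp1$ gives a contribution $\asymp a_1^{\,q-m}=a_1$ from the region $|\langle v,e_n\rangle|\in(1/a_1,1)$ (normals $\pm e_n$, weight $\langle v,u_K(v)\rangle^{-p}\asymp t^{-p}$, $\varrho_K\asymp 1/t$, integrand $t^{-q}$), which strictly dominates the claimed $a_1^{\,q-p-m}=a_1^{1/2}$. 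When $p>0$ the weight $\langle v,u_K(v)\rangle^{-p}$ amplifies, rather than suppresses, the ``sides'' contribution, and your formula misses this competition; a similar issue arises with the third cap estimate, where $\alpha^*_K(\omega')$ for a fixed cap $\omega'$ about a thin normal actually occupies nearly all of $S^{n-1}$ for a degenerating ellipsoid, so there should be no factor $|\omega'|$. Because the contradiction in your scheme hinges on a precise sign/magnitude comparison between these three exponents, these slips are not cosmetic; they have to be redone carefully.

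The paper's proof avoids this three-way case analysis entirely. It first proves a quantitative scaling relation (Proposition~\ref{rpqnvol}): $r_n^{\,q-n-p}\,r_1\cdots r_n\approx 1$, valid for any admissible $o$-symmetric $K$, obtained by bounding $V_K(S^{n-1})$ from above and below using \eqref{cone-volume-from-CqK}, Claim~\ref{hK-int-max} and Lemma~\ref{qth-intrvol-ellipsoid}~(i). This single inequality immediately forces $k\le n-1$ (if all axes were comparable to $r_n$, the left side would be $\succeq r_n^{\,q-p}\to\infty$). The contradiction is then extracted from one test family only: the set $Z_m\subset\partial K_m$ of boundary points whose $L$-projection lies well inside the $L$-slice of the ellipsoid. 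Its Gauss image $\omega_m$ lies inside a slab $\Gamma_{\xi_m}$ of shrinking thickness $\xi_m\to0$, while the cone volume $V_{K_m}(\omega_m)$ is a fixed fraction of $r_1\cdots r_n$. Rewriting $V_{K_m}$ in terms of $\widetilde C_{p,q,K_m}$ via \eqref{cone-volume-from-CqK} and normalizing with the basic estimate, one side stays $\succeq 1$, while the other side equals $\int_{\omega_m}\bigl(h_{K_m}/(nr_{n,m})\bigr)^p\,d\mathcal H^{n-1}$, which tends to $0$ because $h_{K_m}\ge h_{E_m}$ and Lemma~\ref{qth-intrvol-ellipsoid}~(ii) bounds $\int_{\Gamma_{\xi}}h_E^{-s}\,d\mathcal H^{n-1}\preceq\xi^{(1-s)/(2-s)}a_n^{-s}$ with $s=\max\{\tfrac12,-p\}$. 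This is where the hypothesis $p>-1$ enters, exactly matching the sharpness noted in the Jian--Lu--Wang remark.

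Two further points. First, your ``iterating over the gaps'' and ``induction on dimension and number of distinct scales'' is unnecessary: the paper uses a single gap (extract $k$ and $\theta$ with $r_{n-k+1,m}\ge\theta r_{n,m}$ and $r_{n-k,m}/r_{n,m}\to0$), without replacing $K$ by an actual Cartesian product; all estimates are carried out directly via $E_m\subset K_m\subset nE_m$. Second, the claim that $S_K$ has no atoms is correct (for an $o$-symmetric body the density of $\widetilde C_{p,q,K}$ against $S_K$ is bounded below, so $\widetilde C_{p,q,K}\le\lambda\mathcal H^{n-1}$ rules out atoms), but it plays no role in the paper's argument: the needed regularity facts are supplied by Lemma~\ref{Gauss-map-bijective}, which provides the exact cone-volume identity \eqref{cone-volume-from-CqK} under the given hypotheses, so there is no change-of-variables legitimacy issue to worry about.
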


Concerning the structure of the paper, Section~\ref{sec-basics} summarizes the basic properties of $L_p$ dual curvature measures that we need, and Section~\ref{secRegularity} describe the correponding regularity properties.
Theorem~\ref{qcloseton-dualcurvcloseto1} is proved 
in Section~\ref{sec-qcloseton-dualcurvcloseto1}, and the proof of Theorem~\ref{lambda-Cpq-symmetric}
is prepared in Section~\ref{sec-basic-estimate-symmetric}, and completed in Section~\ref{sec-lambda-Cpq-symmetric}.
Finally, Theorems~\ref{local-uniqueness-Cpq} and \ref{local-uniqueness-even-Cpq} are proved in Section~\ref{secUniqueness}.

\section{Some basic notation and notions}
\label{sec-basics}

This section discusses the basic notions and notations used throughout the paper. For notions in convexity, see Schneider \cite{Sch14}. 
The measure  of the emptyset with respect to any measure in this paper and an integral over the emptyset is understood to be zero.  For an integer $m>0$, we frequently use the Hausdorff measure $\mathcal{H}^{m}$ on subsets of $\R^n$ where $\mathcal{H}^{m}$ is normalized in a way such that it coincides with the Lebesgue measure on $m$-dimensional linear subspaces of $\R^n$. In particular, $\mathcal{H}^{n-1}$ is the Lebesgue measure (canonical Haar measure) on $S^{n-1}$. To simplify notation, for $\mathcal{H}^{n}$-measurable $X\subset\R^n$, we frequently write 
$$
|X|=\mathcal{H}^{n}(X).
$$
The origin in $\R^n$ is denoted by $o$. As usual,  ${\rm int}\,X$ stands for the interior of an $X\subset \R^n$. 
For a compact subset $K\subset \R^n$, we write ${\rm relint}\,K$ and $\partial K$ to denote its relative interior and relative boundary with respect to the topology of the linear hull of $K$ (that is the usual notion of boundary if ${\rm int}\,K\neq \emptyset$). The support function of $K$ is
$h_K(z)=\max_{x\in K}\langle x,z\rangle$ for $z\in\R^n$, that is a convex and $1$-homogeneous function on $\R^n$ where a function $\varphi:\R^n\to\R$ is $\alpha$-homogeneous for $\alpha\in\R$ if
$\varphi(\lambda x)=\lambda^\alpha \varphi(x)$ for $\lambda>0$ and $x\in\R^n\backslash \{o\}$. 

Let $K\subset\R^n$ be a convex body; namely, $K$ is a compact convex set with ${\rm int}\,K\neq \emptyset$. In this case, $h_K$ is even twice differentiable in Alexandrov's sense at $\mathcal{H}^{n-1}$ a.e. $w\in S^{n-1}$ and $\mathcal{H}^{n}$ a.e. $z\in \R^n$, and we write  $Dh_K(z)$ to denote the Euclidean gradient (derivative) whenever it exists at a $z\in\R^n$. 
For $x\in\partial K$ and $Z\subset \partial K$, we write $\nu_K(x)$ and $\nu_K(Z)$ to denote the total spherical images; namely, the set of all unit vectors that are exterior normals at $x$ or at a point of $Z$.
	Here $\nu_K(x)$ is a compact spherically convex set for any $x\in\partial K$, and is contained in an open hemisphere. 
	We write $\partial'K$ to denote the set of boundary points with unique exterior unit normal, and for $x\in\partial' K$, abusing the notation, we also consider $\nu_K(x)$ as a vector in $S^{n-1}$
where $\mathcal{H}^{n-1}(\partial K\backslash \partial' K)=0$. 

Let us discuss the   fundamental (Borel) surface area measure $S_K$ on $S^{n-1}$ associated to a compact convex set $K\subset\R^n$ (cf. Schneider \cite{Sch14}): If $K$ is a convex body; namely, ${\rm dim}\,K=n$, then for any Borel function $\varphi:S^{n-1}\to\R$ that is either bounded or non-negative, we have
\begin{equation}
\label{SK-phi}
\int_{S^{n-1}}\varphi\,dS_K=
\int_{\partial' K}\varphi(\nu_K(x))\,d\mathcal{H}^{n-1}(x).
\end{equation}
For example, if $\partial K$ is $C^2_+$ and $u=h_K|_{S^{n-1}}$, then 
\begin{equation}
\label{SK-C2}
dS_K=\det\left(\nabla^2 u+u\,I\right)\,d\mathcal{H}^{n-1}
\end{equation}
where $\nabla u$ and  $\nabla^2 u$ are the spherical gradient and the Hessian of $u$ with respect to a moving orthonormal frame, and $I$ denotes the identity matrix of suitable size.

Next let $K\subset\R^n$ be a compact convex set with ${\rm dim}\,K\leq n-1$. If ${\rm dim}\,K\leq n-2$, then $S_K$ is the zero measure, and if ${\rm dim}\,K= n-1$, then there exists a $w\in S^{n-1}$ such that $K-z\subset w^\bot$ for any $z\in K$, and in this case, $S_K$ is concentrated onto $\{w,-w\}$ in a way such that
\begin{equation}
\label{SK-dim-n-1}
S_K(w)=S_K(-w)=\mathcal{H}^{n-1}(K).
\end{equation}

We consider the set $\mathcal{K}^n_o$ of convex bodies in $\R^n$ that contain the origin $o$. For $K\in \mathcal{K}^n_o$, the main topics of this section are the  dual curvature measure $\widetilde{C}_{q,K}$ introduced by
Huang, Lutwak, Yang, Zhang \cite{HLYZ16}  if $q\in\R$ and $o\in{\rm int}\,K$, their extension, the $L_p$ dual curvature measure $\widetilde{C}_{p,q,K}$ introduced by Lutwak, Yang, Zhang \cite{LYZ18} if $p,q\in\R$ and $o\in{\rm int}\,K$, and the versions of these notions when $q>0$ and possibly $o\in \partial K$, discussed by B\"or\"oczky, Fodor \cite{BoF19}.

	Let $K\in \mathcal{K}^n_o$. 
For $K\in \mathcal{K}^n_o$ and $p\in\R$, Lutwak's $L_p$ surface area $S_{p,K}$ is defined by $dS_{p,K}=u^{1-p}dS_K$
for $u=h_K|_{S^{n-1}}$
(see Lutwak \cite{Lut93}, B\"or\"oczky \cite{Bor23}); in particular, $S_K=S_{1,K}$ and $V_K=\frac1n\,S_{0,K}$ is the so-called cone-volume measure.

The radial function $\varrho_K$ of a  $K\in \mathcal{K}^n_o$ on $S^{n-1}$  is
$$
\varrho_K(w)=\max\{t\geq 0:\,tw\in K\}
$$
for $w\in S^{n-1}$.  It follows that if $v\in S^{n-1}$ is an exterior unit normal at $x=\varrho_K(w)\cdot w\in\partial K$, then
\begin{equation}
\label{hK-rhoK}
h_K(v)=\langle \varrho_K(w)\cdot w,v\rangle\leq \varrho_K(w),
\end{equation}
and hence if in addition, $h_K$ is differentiable at $v$, and $u=h_K|_{S^{n-1}}$,  then 
\begin{align}
\label{DhK-u-x}
Dh_K(v)=&\nabla u(v)+u(v)\cdot v=x;\\
\label{DhK-length-rhoK}
\|Dh_K(v)\|=&\sqrt{\|\nabla u\|^2+u^2}=\varrho_K(w)=\|x\|;\\
\label{DhK-rhoK}
h_K(v)=&u(v)\leq  \|Dh_K(v)\|.
\end{align}
	
For $K\in \mathcal{K}^n_o$, another fundamental measure is the cone volume measure $dV_K=\frac1n\,h_K\,dS_K$. To provide a geometric interpretation of the cone volume measure,
	we consider the Borel measure $\widetilde{V}_K$ on $\partial K$, namely, for measurable $Z\subset\partial K$, we define
	\begin{equation}
		\label{tildeVKZ-def} 
		\widetilde{V}_K(Z)=\frac1n\int_{Z\cap \partial' K}\langle x,\nu_K(x)\rangle\,dx=\frac1n\int_{Z\cap \partial' K}h_K(\nu_K(x))\,dx.
	\end{equation}
	In particular,  $\widetilde{V}_K$ is inspired by the pullback of the cone volume measure to $\partial K$; namely, for Borel $\omega\subset S^{n-1}$, combining \eqref{SK-phi} and \eqref{tildeVKZ-def} yields that
	\begin{equation}
		\label{tildeVKZ-VK}
		\widetilde{V}_K(\{x\in \partial' K: \nu_K(x)\in \omega\})=V_K(\omega),
	\end{equation}
	
	Next we provide an interpretation of $\widetilde{V}_K$ in terms of the radial function $\varrho_K$ of a $K\in \mathcal{K}^n_o$ on $S^{n-1}$. We consider the open convex cone $\Sigma_K=\{(0,\infty)x:x\in{\rm int}\,K\}$, and hence $\Sigma_K=\R^n$
	if $o\in{\rm int}K$. 
	Since
	$\{x\in\partial' K:\langle \nu_K(x),x\rangle=0\}= \partial \Sigma_K\cap \partial' K$,
	we have $\widetilde{V}_K(\partial K\cap \partial \Sigma_K)=0$ if $\partial \Sigma_K\neq \emptyset$, the paper 
	Huang, Lutwak, Yang, Zhang \cite{HLYZ16} working on the case  when $o\in{\rm int}\,K$ yields that if $Z\subset \partial K$ is measurable, then
\begin{equation}
\label{tildeVKZ-def2}
\widetilde{V}_K(Z)=\frac1n\int_{\pi_{S^{n-1}}(Z)}\varrho_K^n\,\mathcal{H}^{n-1}=\left|\bigcup\{{\rm conv}\{o,x\}:\,x\in Z\}\right|.
\end{equation} 
For any measurable $\varphi:\,\partial K\to[0,\infty)$, we deduce from \eqref{tildeVKZ-def}  that 
\begin{equation}
\label{tildeVK-varphi-rhoK}
\int_{\partial K}\varphi(x)\,d\widetilde{V}_K(x)=\int_{\partial' K}\varphi(x)\langle \nu_K(x),x\rangle\,d\mathcal{H}^{n-1}(x)
\end{equation} 
We note that if $\omega\subset S^{n-1}$, $Z=\nu_K^{-1}(\omega)\subset \partial K$ and $\Phi\in{\rm GL}(n)$ then $\langle \Phi x,\Phi^{-t}v\rangle=\langle x,v\rangle$ for $x,v\in\R^n$ yields that 
	\begin{equation}
		\label{normals-linear-image}
		\nu_{\Phi K}^{-1}\left(\Phi^{-t}_*\omega\right)=\Phi Z\mbox{ \ for \ }\Phi^{-t}_*\omega=\left\{\frac{\Phi^{-t}v}{\|\Phi^{-t}v\|}:\,v\in\omega\right\}.
	\end{equation}
	Now $\widetilde{V}_K$ nicely intertwines with linear transformations; namely, if $\Phi\in{\rm GL}(n)$ and $Z\subset\partial K$ is $\mathcal{H}^{n-1}$ measurable, then \eqref{tildeVKZ-def2} implies that
\begin{equation}
\label{tildeVK-linear-inv}
\widetilde{V}_{\Phi\,K}(\Phi\,Z)=
|\det\Phi|\cdot \widetilde{V}_K(Z),
\end{equation}
and hence for any measurable $\omega\subset S^{n-1}$, we deduce from \eqref{normals-linear-image} that
\begin{equation}
\label{VK-linear-inv}
V_{\Phi\,K}\left(\Phi^{-t}_*\omega\right)=
|\det\Phi|\cdot V_K(\omega),
\end{equation}
thus if $\varphi:S^{n-1}\to [0,\infty)$ is continuous, then
\begin{equation}
\label{VK-linear-inv-phi}
\int_{S^{n-1}}\varphi\,dV_{K}=|{\rm det} \Phi|^{-1} \int_{S^{n-1}}\varphi\circ \Phi^{t}_*\,dV_{\Phi K}.
\end{equation}
It follows from \eqref{tildeVKZ-def2} that
\begin{equation}
\label{VK-sphere-volume}
V_K(S^{n-1})=\widetilde{V}_K(\partial K)=|K|.
\end{equation}

For $q>0$, extending the notion introduced by Huang, Lutwak, Yang, Zhang \cite{HLYZ16} for $K\in \mathcal{K}^n_{(o)}$,
B\"or\"oczky, Fodor \cite{BoF19} defined
the $q$th dual curvature measure  of a $K\in \mathcal{K}^n_o$ as follows: 
For $\omega\subset S^{n-1}$, let
$$
\alpha^*_K(\omega)=\{v\in S^{n-1}:\nu_K(\varrho_K(v)\cdot v)\cap\omega\neq \emptyset \}
$$
which is $\mathcal{H}^{n-1}$ measurable if $\omega$ is measurable.
Now for $\mathcal{H}^{n-1}$ measurable $\omega\subset S^{n-1}$, its $q$th dual curvature measure is
\begin{equation} 
\label{Cq-def}
\widetilde{C}_{q,K}(\omega)=\int_{\alpha^*_K(\omega)}\varrho_K^{q}\,d\mathcal{H}^{n-1}.
\end{equation}
By the definition of $\varrho_K$, if $o\in\partial K$ and $q>0$, then 
\begin{equation}
\label{Cq-nuo-zero}
\widetilde{C}_{q,K}(\nu_K(o))=\widetilde{C}_{q,K}\Big(\{v\in S^{n-1}:\,h_K(v)=0\}\Big)=0.
\end{equation}
According to \eqref{tildeVK-varphi-rhoK}, the dual curvature measure can be written as a surface integral (cf. Huang, Lutwak, Yang, Zhang \cite{HLYZ16})
\begin{align}
\label{dual-curvature-cone-surface}
\widetilde{C}_{q,K}(\omega)=&
\int_{\nu_K^{-1}(\omega)\cap\partial' K}\|x\|^{q-n}\langle x,\nu_K(x)\rangle\,d\mathcal{H}^{n-1}(x)\\
\label{dual-curvature-cone-vol}
=&n\int_{\nu_K^{-1}(\omega)\cap\partial' K}\|x\|^{q-n}\,d\widetilde{V}_K(x)
\end{align}
for a measurable $\omega\subset S^{n-1}$ where recall that the measure of the emptyset and an integral over the emptyset is understood to be zero. It follows from \eqref{dual-curvature-cone-surface} that if $\varphi:\omega\to[0,\infty)$ is  measurable, then
\begin{align} 
\label{dual-curvature-phi-cone-on-boundary}
\int_{S^{n-1}}\varphi\,d\widetilde{C}_{q,K}=&
\int_{\partial' K}\varphi(\nu_K(x))\cdot \|x\|^{q-n}\langle x,\nu_K(x)\rangle\,d\mathcal{H}^{n-1}(x)\\
\label{dual-curvature-phi-cone-radial}
=&\int_{S^{n-1}}\varphi \cdot(\varrho_K\circ \alpha^*_K)^{q}\,d\mathcal{H}^{n-1}.
\end{align}
It follows from \eqref{dual-curvature-phi-cone-on-boundary}  that if $h_K$ is differentiable at each point of a measurable $\omega\subset S^{n-1}$ and $\varphi:\omega\to[0,\infty)$ is measurable, then
\begin{align}
\label{dual-curvature-phi-cone-surface}
\int_{\omega}\varphi\,d\widetilde{C}_{q,K}=&
\int_{\omega}\varphi \cdot \|Dh_K\|^{q-n}h_K\,dS_K\\
\label{dual-curvature-phi-cone-vol}
=& n\int_{\omega}\varphi \cdot\|Dh_K\|^{q-n}\,dV_K,
\end{align}
and hence
 \eqref{tildeVKZ-def}, \eqref{tildeVKZ-VK} and \eqref{dual-curvature-cone-vol} yield that
	$\widetilde{C}_{n,K}=nV_K$.

Let $q>0$  and $K\in\mathcal{K}_o^n$. For $p\in\R$, 
it follows from \eqref{Cq-nuo-zero} that
it is possible to extend the definition of Lutwak, Yang, Zhang \cite{LYZ18} of the $L_p$ $q$th dual  curvature measure on $S^{n-1}$ as
\begin{equation}
\label{Lpqth-dual-definition}
d\widetilde{C}_{p,q,K}= h_K^{-p}\,d\widetilde{C}_{q,K},
\end{equation}
and hence  for any measurable $\omega\subset S^{n-1}$,  \eqref{dual-curvature-phi-cone-on-boundary} and \eqref{dual-curvature-phi-cone-radial} imply that if either $p<1$, or $p\geq 1$ and $\mathcal{H}^{n-1}(\{x\in\partial'K:\langle x,\nu_K(x)\rangle=0\})=0$, then
\begin{align}
\label{lp-dual-curvature-omega-boundary}
\widetilde{C}_{p,q,K}(\omega)=&\int_{\nu_K^{-1}(\omega)\cap\partial' K} \|x\|^{q-n}\langle x,\nu_K(x)\rangle^{1-p}\,d\mathcal{H}^{n-1}(x)\\
\label{dual-curvaturep-pq-radial}
=&\int_{\alpha^*_K(\omega)}h_K\left(\nu_K(\varrho_K(v)\cdot v)\right)^{-p} \cdot\varrho_K(v)^{q}\,d\mathcal{H}^{n-1}(v),
\end{align}
which value might be infinite. We deduce from \eqref{SK-phi}, \eqref{SK-C2}, \eqref{DhK-length-rhoK} and \eqref{lp-dual-curvature-omega-boundary} that if  $q>0$, $p\in\R$ and $K\in\mathcal{K}_o^n$ has $C^2_+$ boundary, then
the  Monge-Amp\`ere equation on $S^{n-1}$ corresponding to the $L_p$ $q$th dual Minkowski problem  is
\begin{equation}
\label{Lp-dualMink-Monge-Ampere}
\left(\|\nabla u\|^2+u^2\right)^{\frac{q-n}2}\cdot u^{1-p}\det\left(\nabla^2 u+u\,I\right)=f
\end{equation}
where $u=h_K|_{S^{n-1}}$.
In addition, for a non-negative function $f\in L_1(S^{n-1})$, $q>0$ and $p\in\R$, a $K\in\mathcal{K}_o^n$ is a solution of the $L_p$ dual Minkowski problem \eqref{Lp-dualMink-Monge-Ampere} in Alexandrov's sense (in the sense of measure) if
\begin{equation}
\label{Lp-dualMink-Monge-Ampere-measure}
d\widetilde{C}_{p,q,K}=f\,d\mathcal{H}^{n-1}
\end{equation}
where we assume that $\mathcal{H}^{n-1}(\{x\in\partial'K:\langle x,\nu_K(x)\rangle=0\})=0$ when $p\geq 1$.
We note that according to B\"or\"oczky, Fodor \cite{BoF19}, if $p>1$, $q>0$ and the measure of any open hemisphere is positive, then \eqref{Lp-dualMink-Monge-Ampere-measure} always has an Alexandrov's solution satisfying that $\mathcal{H}^{n-1}(\{x\in\partial'K:\langle x,\nu_K(x)\rangle=0\})=0$.  

It is not hard the prove the following about the finiteness of the $L_p$ dual curvature measure:

\begin{remark}
\label{CpqK-bounded}
Let  $q>0$  and $K\in\mathcal{K}_o^n$. 
\begin{description}
\item{(i)}If $p<\min\{q,1\}$, or
  $o\in{\rm int}\,K$, then $\widetilde{C}_{p,q,K}(S^{n-1})<\infty$.
\item{(ii)} If $p\geq \max\{q,1\}$, $o\in\partial K$ and $\mathcal{H}^{n-1}(\{x\in\partial'K:\langle x,\nu_K(x)\rangle=0\})=0$, then $\widetilde{C}_{p,q,K}(S^{n-1})=\infty$.
\end{description}
\end{remark}

As before, let $K\in\mathcal{K}_0^n$. It follows from the definition of $\widetilde{C}_{p,q,K}$ that
$$
\widetilde{C}_{p,q,\lambda\,K}=\lambda^{q-p}\widetilde{C}_{p,q,K} \mbox{ \ for $\lambda>0$}.
$$
As special cases, if $q>0$ and $p<1$, then
\begin{align*}
\widetilde{C}_{0,q,K}=&\widetilde{C}_{q,K},\\
\widetilde{C}_{p,n,K}=&h_K^{1-p}\,dS_K=S_{p,K}\mbox{ \ is Lutwak's so-called $L_p$ surface area.}
\end{align*}
We deduce from \eqref{dual-curvature-phi-cone-surface} and \eqref{dual-curvature-phi-cone-vol} that if $h_K$ is differentiable at each point of a measurable $\omega\subset S^{n-1}$ and $\varphi:\omega\to[0,\infty)$ is measurable, then
\begin{align}
\label{lp-dual-curvature-phi-cone-surface}
\int_{\omega}\varphi\,d\widetilde{C}_{p,q,K}=&
\int_{\omega}\varphi \cdot \|Dh_K\|^{q-n}h_K^{1-p}\,dS_K\\
\label{lp-dual-curvature-phi-cone-vol}
=& n\int_{\omega}\varphi \cdot\|Dh_K\|^{q-n}h_K^{-p}\,dV_K.
\end{align}

We recall that $u=h_K|_{S^{n-1}}$, being Lipschitz, is differentiable $\mathcal{H}^{n-1}$ a.e. on $S^{n-1}$. In addition, if  $u$ is differentiable at a $w\in S^{n-1}$, then (cf. \eqref{DhK-u-x} and \eqref{DhK-length-rhoK})
$\langle x,w\rangle=h_K(w)=u(w)$ and
\begin{align*}
x=&Dh_K(w)=\nabla u(w)+u(w)\cdot w,\\
\|x\|=&\varrho_K\circ \alpha^*_K(w)=\sqrt{\|\nabla u(w)\|^2+\|u(w)\|^2}
\end{align*}
where $\nabla u$ is the spherical gradient of $u$. Thus we deduce from  \eqref{dual-curvature-phi-cone-surface} and \eqref{dual-curvature-phi-cone-vol} that if $u$ is differentiable at each point of a measurable $\omega\subset S^{n-1}$, then
	\begin{align}
		\label{dual-curvature-cone-vol1}
		\widetilde{C}_{q,K}(\omega)=&n\int_{\omega}\left(\|\nabla u\|^2+\|u\|^2\right)^{\frac{q-n}2}\cdot u\,dS_K\\
		\label{dual-curvature-cone-vol0}
		=&\int_{\omega}(\varrho_K\circ \alpha^*_K)^{q-n}\,dV_K.
	\end{align}
	Here $u$ is differentiable at each point of a measurable $\omega\subset S^{n-1}$ if and only if $\nu_K^{-1}(\omega)\subset\partial K$ contains no segment.
According to \eqref{dual-curvature-cone-vol1}, 
the  Monge-Amp\`ere equation corresponding to the $q$th dual Minkowski problem for $q>0$ is
\begin{equation}
\label{dualMink-Monge-Ampere}
\left(\|\nabla u\|^2+u^2\right)^{\frac{q-n}2}\cdot u\det\left(\nabla^2 u+u\,I\right)=f,
\end{equation}
and if $p<1$, then the $L_p$ $q$th dual Minkowski problem is
\begin{equation}
\label{Lp-dualMink-Monge-Ampere}
\left(\|\nabla u\|^2+u^2\right)^{\frac{q-n}2}\cdot u^{1-p}\det\left(\nabla^2 u+u\,I\right)=f.
\end{equation}

For compact, convex sets $K_1,K_2\subset \R^n$, their Hausdorff distance is just the $L^\infty$ norm of the restrictions of the support functions to $S^{n-1}$; namely, it is  $\min\{|h_{K_1}(x)-h_{K_2}(x)|:x\in S^{n-1}\}$.  
In this paper, convergence of compact, convex sets in $\R^n$ is always meant convergence in terms of the Hausdorff distance. 
As it was proved by Alexandrov (see for example, Schneider \cite{Sch14}), the surface area measure is weakly continuous on compact convex sets. In particular, if compact convex sets $K_m\subset \R^n$ tend to a compact convex convex set $K\subset\R^n$ in terms of the Hausdorff distance, then 
\begin{equation}
\label{convergence-support-function}
\mbox{$h_{K_m}$ tends uniformly to $h_K$},
\end{equation}
and if moreover, continuous functions $\varphi_m:S^{n-1}\to \R$ tend uniformly to a continuous function $\varphi:S^{n-1}\to \R$, then
\begin{equation}
\label{weak-convergence-SK}
\lim_{m\to \infty}\int_{S^{n-1}}\varphi_m \,dS_{K_m}=\int_{S^{n-1}}\varphi\, dS_{K}.
\end{equation}
Combining \eqref{convergence-support-function} and \eqref{weak-convergence-SK}, we deduce that if $p<1$
and $K_m\in\mathcal{K}^n_o$ tends to a $K\in\mathcal{K}^n_o$ as $m$ tends to infinity, then
\begin{equation}
\label{weak-convergence-SpK}
\mbox{$S_{p,K_m}$ tends weakly to $S_{p,K}$}.
\end{equation}
We note according the Blaschke selection theorem, any bounded sequence $K_m\in \mathcal{K}^n_o$ of convex bodies contains a subsequence that tends to a compact convex set $K\subset\R^n$ with $o\in K$.
	
\section{Regularity if the dual Minkowski curvature is bounded and bounded away from zero}
\label{secRegularity}

Our technical Lemma~\ref{Gauss-map-bijective} is based on Caffarelli's classical papers \cite{Caf90a,Caf90b,Caf93} (see 
Figalli \cite{Fig17} for a more readable account). We recall that either in $\R^d$ or in $S^d$, a boundary point $x$ is an extreme point of a convex set $C$ if a (geodesic) segment in the closure of $C$ contains $x$, then $x$ is an endpoint of the segment. We note that if $C$ is bounded, then the closure of $C$ is the convex hull of the extreme points. For $X\subset \R^d$, we set 
$$
{\rm pos}_+X=\{t x:\,x\in X\mbox{ and }t>0\}.
$$
Let us recall some fundamental properties of Monge-Amp\`ere equations based on the monograph  Figalli \cite{Fig17} and
 the survey article Trudinger, Wang~\cite{TrW08}.
Given a convex function $v$ defined in an open convex set $\Omega\subset\R^{d}$, $d\geq 2$, $D v$ and $D^2 v$ denote its gradient and its Hessian, respectively. The subgradient $\partial v(x)$ of $v$ at $x\in\Omega$ is defined as
$$
\partial v(x) =\{z\in\R^{d} : v(y)\geq v(x)+\langle z,y-x\rangle \text{ for each $y\in\Omega$}\},
$$
which is a compact convex set.
If $\omega\subset\Omega$ is a Borel set, then its associated Monge-Amp\`ere measure is 
\begin{equation}
\label{Monge-Ampere-measure}
\mu_v(\omega)=\mathcal{H}^d\Big(\bigcup_{x\in\omega}\partial v (x)\Big).
\end{equation}
We observe that if $v$ is $C^2$, then
$$
\mu_v(\omega)=\int_\omega \det( D^2 v)\,d\mathcal{H}^{d}.
$$
The following regularity properties of Monge-Amp\'ere equations have been proved in a series of papers by Caffarelli, and are thoroughly presented in a simplified manner by Figalli \cite{Fig17}.

\begin{theo}[Caffarelli]
\label{Caffarelli}
For $\lambda>1$, a bounded open convex set $\Omega\subset\R^d$ with $d\geq 1$,  and the measurable function $f$ on $\Omega$ with $1/\lambda\leq f\leq \lambda$, let the convex function $v$ on $\Omega$ satisfy
\begin{equation}
\label{Caffarelli-eq}
\det v=f
\end{equation}
in the sense of measure; namely, $d\mu_v=f\,d\mathcal{H}^d$ for the Monge-Amp\'ere measure $\mu_v$.
\begin{description}
\item{(i)} If $v(x)\geq 0$ for $x\in\Omega$, and there exists an extreme point $x_0\in\Omega$ of the convex set $\{v=0\}$, then $\{v=0\}=\{x_0\}$.
			
\item{(ii)} If $v$ is strictly convex, then $v$ is locally $C^{1,\alpha}$ for an $\alpha\in(0,1)$ depending on $d$ and $\lambda$.
			
\item{(iii)} If $f$ is locally $C^{0,\alpha}$ for an $\alpha\in(0,1)$, then $v$ is locally $C^{2,\alpha}$.

			
\end{description}
\end{theo}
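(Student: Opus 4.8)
The plan is to reduce each of the three assertions to the corresponding statement in Caffarelli's original papers \cite{Caf90a,Caf90b,Caf93}, as organized in Figalli \cite{Fig17}, so the bulk of the work is a careful bookkeeping of what is available in the literature rather than a new argument. Throughout, the standing hypothesis is the Monge--Amp\`ere measure identity $\mu_v=f\,\mathcal{H}^d$ with $1/\lambda\le f\le\lambda$ on a bounded open convex $\Omega\subset\R^d$; note that since $f$ is bounded above and below by positive constants, $\mu_v$ is comparable to Lebesgue measure, which is the precise structural input all of Caffarelli's estimates use.

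For part (i), the key tool is Caffarelli's result that a convex solution of $\mu_v=f\,\mathcal{H}^d$ with $f$ bounded away from $0$ and $\infty$ cannot have a contact set $\{v=0\}$ (assuming $v\ge 0$ and $\min v=0$) that is a segment with an endpoint in the interior of $\Omega$; more precisely, any nonextreme face of the graph that is not a single point must reach $\partial\Omega$. I would phrase the argument as follows: suppose $x_0\in\Omega$ is an extreme point of the convex set $F:=\{v=0\}$ but $F\neq\{x_0\}$. Pick any other point $y_0\in F$; the segment $[x_0,y_0]$ lies in $F$ and hence in the graph of $v$ at height $0$. Because $x_0$ is extreme in $F$ and $F$ is the whole zero set, the segment is a proper face containing $x_0$ as an endpoint but having positive length, with $x_0$ in the interior of $\Omega$. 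This is exactly the configuration excluded by Caffarelli's strict-convexity dichotomy (see \cite[Sec.~4.2]{Fig17} or \cite{Caf90a}): if $v$ agrees with a supporting hyperplane on a segment whose relative interior meets $\Omega$, that segment must extend until it hits $\partial\Omega$, contradicting that $x_0$ is an interior endpoint. Hence $F=\{x_0\}$. The one subtlety to handle is the passage from ``extreme point of $F$'' to ``endpoint of a segment in the graph lying strictly inside $\Omega$'', which is immediate once one observes $\{v=0\}=F$ coincides with the relevant face of the graph because $v\ge 0$ attains its minimum $0$ precisely on $F$.

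For parts (ii) and (iii), assuming $v$ is strictly convex (which, by part (i) applied locally, is automatic once one knows no segment sits in the graph, but here it is hypothesized directly), strict convexity means that through every point the supporting hyperplane touches the graph at only that point, so sections $\{v<\ell\}$ are bounded convex sets compactly contained in $\Omega$ for $\ell$ slightly above $v(x)$. Caffarelli's $C^{1,\alpha}$ interior regularity (\cite{Caf90b}, see \cite[Thm.~4.21]{Fig17}) then applies on such sections: the $\alpha$ and the estimate depend only on $d$ and $\lambda$ because after the John-position affine renormalization of a section the normalized measure stays comparable to Lebesgue with constants controlled by $\lambda$. This gives (ii). For (iii), with the additional hypothesis that $f\in C^{0,\alpha}_{\rm loc}$, one invokes Caffarelli's Schauder-type estimate for Monge--Amp\`ere (\cite{Caf90a,Caf93}, see \cite[Thm.~4.42 and Cor.~4.43]{Fig17}): once $v$ is strictly convex and $C^{1,\alpha}$, and the right-hand side is locally H\"older, $v$ is locally $C^{2,\alpha}$, and in fact $C^{k+2,\alpha}$ if $f\in C^{k,\alpha}$. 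The main obstacle, such as it is, is purely expository: making sure the hypotheses are stated in a form that exactly matches the versions in \cite{Fig17} (in particular, that ``$\det v=f$ in the sense of measure'' with $f$ pinched between $1/\lambda$ and $\lambda$ is precisely the normalization under which the universal constants depend only on $d$ and $\lambda$), and that the localization to sections is legitimate, which it is because strict convexity guarantees the sections shrink to points. No genuinely new estimate is needed; the statement is a packaging of the Caffarelli theory in the form the rest of the paper will use.
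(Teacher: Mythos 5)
Your proposal is correct and matches the paper's treatment: the paper states this theorem as a known result of Caffarelli, citing \cite{Caf90a,Caf90b,Caf93} and Figalli's monograph \cite{Fig17}, and gives no proof of its own. Your reduction of (i) to Caffarelli's dichotomy that a contact set of a solution with pinched Monge--Amp\`ere measure has no extreme points in the interior unless it is a singleton, and of (ii)--(iii) to the interior $C^{1,\alpha}$ and Schauder estimates (with (iii) read under the strict convexity of (ii)), is exactly the intended bookkeeping.
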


The main result of this section is the following statement.

\begin{lemma}
\label{Gauss-map-bijective}
Let $q>0$, $\lambda>1$ and $K\in\mathcal{K}^n_o$, and 
\begin{itemize}
\item either let $p<1$,
\item or let $p\in\R$ and $o\in {\rm int}\,K$ 
\end{itemize}
where $d\widetilde{C}_{p,q,K}=fd\mathcal{H}^{n-1}$ for a measurable  $f:\,S^{n-1}\to [\frac1{\lambda},\lambda]$.
\begin{description}
\item[(i)] $h_K$ is $C^{1}$  on the set $\{h_K>0\}$; and $\partial K\backslash\Gamma_K$ is $C^1$ and contains no segment for 
$$
\Gamma_K=\{x\in\partial K:\,\exists w\in\nu_K(x) \mbox{ with } h_K(w)=0\}.
$$
			
\item[(ii)]  If $Z\subset\partial K$ is measurable, and $\omega=\nu_K(Z)$, then
\begin{align}
\label{Gauss-map-bijective-eq}
\widetilde{C}_{p,q,K}(\omega)=&n\int_{Z}\langle x,\nu_K(x)\rangle^{-p}\|x\|^{q-n}\,d\widetilde{V}_K(x)\\
\label{Gauss-map-bijective-SK-eq}
=&n\int_{Z\cap\partial' K}\langle x,\nu_K(x)\rangle^{1-p}\|x\|^{q-n}\,d\mathcal{H}^{n-1}(x).
\end{align}
			
\item[(iii)] $\varrho_K\circ\alpha^*_K(v)=\|Dh_K(v)\|>0$ for both $\mathcal{H}^{n-1}$ a.e. and  $\widetilde{C}_{p,q,K}$ a.e. $v\in S^{n-1}$, and we have
\begin{align}
\label{CpqK-from-cone-volume}
d\widetilde{C}_{p,q,K}=&nh_K^{-p}(\varrho_K\circ\alpha^*_K)^{q-n}\,dV_K=nh_K^{-p}\|Dh_K\|^{q-n}\,dV_K\\
\label{CpqK-from-SK}
=& h_K^{1-p}\|Dh_K\|^{q-n}\,dS_K\\
\label{cone-volume-from-CqK}
dV_K=&\frac{h_K^{p}}n(\varrho_K\circ\alpha^*_K)^{n-q}\,d\widetilde{C}_{p,q,K}=\frac{h_K^{p}}n\cdot \|Dh_K\|^{n-q}\,d\widetilde{C}_{p,q,K}.
\end{align}

\item[(iv)] If $o\in\partial K$, then $\mathcal{H}^{n-1}(\nu_K(o))=0$.

\item[(v)] If, in addition, $f$ is positive and $C^{0,\alpha}$ for an $\alpha\in(0,1)$, then $h_K$ is locally $C^{2,\alpha}$ on $\{h_K>0\}$, and $\partial K\backslash\Gamma_K$ is locally $C^2_+$.


\end{description}
		
\end{lemma}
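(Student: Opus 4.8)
The plan is to unpack the statement of Lemma~\ref{Gauss-map-bijective} into its five parts and prove them in order, using the Monge--Amp\`ere regularity of Caffarelli (Theorem~\ref{Caffarelli}) applied to a suitably chosen convex potential built from $h_K$, together with the integral representations \eqref{lp-dual-curvature-omega-boundary}, \eqref{dual-curvature-phi-cone-on-boundary}, \eqref{tildeVKZ-def} already established in Section~\ref{sec-basics}. The natural potential is $v=h_K$ restricted to the open convex cone $\Sigma_K=\{tx:x\in{\rm int}\,K,\ t>0\}$ (or, after a standard reduction, a restriction of $h_K$ to a bounded piece of a hyperplane transversal to $\Sigma_K$). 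One first rewrites the hypothesis $d\widetilde C_{p,q,K}=f\,d\mathcal H^{n-1}$ with $1/\lambda\le f\le\lambda$ as a Monge--Amp\`ere equation for $v$: using \eqref{lp-dual-curvature-omega-boundary} one has $\widetilde C_{p,q,K}(\omega)=\int_{\nu_K^{-1}(\omega)\cap\partial'K}\|x\|^{q-n}\langle x,\nu_K(x)\rangle^{1-p}\,d\mathcal H^{n-1}$, and transporting this to $S^{n-1}$ via the Gauss map shows that the Monge--Amp\`ere measure $\mu_{h_K}$ restricted to $\{h_K>0\}$ has density $h_K^{p-1}\|Dh_K\|^{n-q}f(\nu_K)$, which is bounded between two positive constants once $h_K$ is bounded and bounded below on the relevant compact set. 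This is the key point: on the region where $h_K>0$ the dual Minkowski equation is, up to a bounded positive factor, a standard Monge--Amp\`ere equation $\det D^2 v=\tilde f$ with $1/\Lambda\le\tilde f\le\Lambda$, so Caffarelli's theory applies verbatim.

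Granting that reduction, the parts fall out as follows. For (i), strict convexity of $h_K$ on $\{h_K>0\}$ (equivalently: no segment in $\partial K\setminus\Gamma_K$) follows from Theorem~\ref{Caffarelli}(i): if $h_K$ were affine on a segment, the corresponding face of $K$ would force $\{v=\text{affine piece}\}$ to be a non-trivial convex set with an extreme point, contradicting (i) (one applies the argument to $v$ minus the supporting affine function); then Theorem~\ref{Caffarelli}(ii) gives $C^{1,\alpha}$, in particular $C^1$, on $\{h_K>0\}$, and $C^1$-ness of $h_K$ is exactly $\partial K\setminus\Gamma_K$ being $C^1$ with unique normals, while strict convexity of $h_K$ translates via Legendre duality into $\partial K\setminus\Gamma_K$ containing no segment. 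For (ii), the two displayed formulas are just \eqref{dual-curvature-cone-vol} and \eqref{dual-curvature-cone-surface} (i.e. \eqref{lp-dual-curvature-omega-boundary} with the factor $\|x\|^{q-n}$) combined with \eqref{tildeVKZ-def}, once one knows from (i) that $\nu_K$ is essentially bijective between $Z\cap\partial'K$ and $\omega$ off a null set; the content is checking that the pieces where $h_K=0$, or where $x\in\Gamma_K$, contribute nothing, which uses \eqref{Cq-nuo-zero} and (iv). For (iii), the identities \eqref{CpqK-from-cone-volume}--\eqref{cone-volume-from-CqK} are the pointwise ($\mathcal H^{n-1}$-a.e.\ and $\widetilde C_{p,q,K}$-a.e.) change-of-variable statements: $u=h_K$ is differentiable a.e., $\|Dh_K(v)\|=\varrho_K\circ\alpha^*_K(v)$ by \eqref{DhK-length-rhoK}, and the definition $d\widetilde C_{p,q,K}=h_K^{-p}\,d\widetilde C_{q,K}$ together with \eqref{dual-curvature-cone-vol1} gives the formulas; positivity of $\|Dh_K(v)\|=\varrho_K\circ\alpha^*_K(v)$ a.e.\ with respect to both measures comes from $o$ being an interior point of $K$ relative to $\Sigma_K$ (so $\varrho_K>0$ everywhere) combined with $f\ge 1/\lambda>0$, which prevents $dV_K$ and $d\widetilde C_{p,q,K}$ from charging the null set where $h_K=0$.

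For (iv), the claim $\mathcal H^{n-1}(\nu_K(o))=0$ when $o\in\partial K$: here $\nu_K(o)$ is a spherically convex subset of $S^{n-1}$; if it had positive measure it would contain an open spherical cap, hence $\partial K$ near $o$ would be a cone with apex $o$, and then $\Gamma_K$ and $\{h_K=0\}$ would be large, but \eqref{Cq-nuo-zero} says $\widetilde C_{q,K}(\{h_K=0\})=0$ while $f\ge1/\lambda$ on that set would force positive mass --- a contradiction; alternatively, and more robustly, one argues directly that $o\in\partial K$ with $\mathcal H^{n-1}(\nu_K(o))>0$ is incompatible with $\widetilde C_{p,q,K}$ having density bounded below, by using \eqref{lp-dual-curvature-omega-boundary} and noting the integrand vanishes on $\nu_K(o)$. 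Finally (v) is the bootstrap: once $f$ is additionally $C^{0,\alpha}$ and positive, the reduced density $\tilde f=h_K^{p-1}\|Dh_K\|^{n-q}f(\nu_K)$ is locally $C^{0,\alpha}$ on $\{h_K>0\}$ (using the local $C^{1,\alpha}$ regularity of $h_K$ from (i) to control $\|Dh_K\|$, $p-1<0$ poses no problem since $h_K>0$ locally there, and $\nu_K=Dh_K/\|Dh_K\|$ is $C^{0,\alpha}$), so Theorem~\ref{Caffarelli}(iii) upgrades $h_K$ to locally $C^{2,\alpha}$ on $\{h_K>0\}$, and Legendre duality transfers this to $\partial K\setminus\Gamma_K$ being locally $C^2_+$ (nondegeneracy of the Hessian is guaranteed because $\det(\nabla^2 h_K+h_K I)$ equals a positive bounded quantity by the equation). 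The main obstacle I expect is the careful bookkeeping in the reduction of the first paragraph --- verifying that restricting to $\{h_K>0\}$ and to $\Sigma_K$ is legitimate, that the Monge--Amp\`ere measure of $h_K$ on that region really has the claimed two-sided bounded density, and that no mass of the various measures is lost on the boundary pieces $\Gamma_K$, $\{h_K=0\}$, $\nu_K(o)$ --- i.e.\ tying together \eqref{Cq-nuo-zero}, part (iv), and the a.e.-differentiability of $h_K$ into a clean statement to which Caffarelli's theorem applies directly.
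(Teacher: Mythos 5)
Your proposal is correct and follows essentially the same strategy as the paper's proof: transfer the $L_p$ dual Minkowski equation to a locally bounded-density Monge--Amp\`ere equation for $h_K$ near directions where $h_K>0$, invoke Theorem~\ref{Caffarelli}(i)--(iii) for parts (i) and (v), prove (iv) directly from \eqref{Cq-nuo-zero} together with the lower density bound $f\ge 1/\lambda$, and derive (ii)--(iii) from the integral representations after checking that $\Gamma_K$ and $\nu_K(o)$ carry no mass. The minor imprecisions in your write-up --- the transported density on $S^{n-1}$ is $h_K^{p-1}\|Dh_K\|^{n-q}f$ and not $f\circ\nu_K$, and in part (iii) the a.e.\ positivity of $\varrho_K\circ\alpha^*_K=\|Dh_K\|$ when $o\in\partial K$ rests on part (iv) (since $\{Dh_K=0\}\subset\nu_K(o)$), not on the false claim that $\varrho_K>0$ everywhere --- do not affect the overall correctness of the argument.
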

\noindent{\bf Remark. } Even if $f$ is positive and $C^{0,\alpha}$ for an $\alpha\in(0,1)$, it is possible that $\Gamma_K\subset \partial K$ contains an $(n-1)$-ball centered at $o$ (cf. Example~4.2 in Bianchi, B\"or\"oczky, Colesanti \cite{BBC20}).\\
\proof Let $u=h_K|_{S^{n-1}}$.

First we consider any $e\in S^{n-1}$ such that $h_K(e)>0$. Now the function $v(y)=h_K(y+e)$ of $y\in e^\bot$ satisfies that 
$v(y)=(1+\|y\|^2) u(w)$ for
$w=(1+\|y\|^2)^{-1}(y+e)\in S^{n-1}$. In addition, we deduce from \eqref{dualMink-Monge-Ampere} that the Monge-Amp\`ere equation
\begin{equation}
\label{MongeAmpereRn}
\det( D^2 v(y) )=v(y)^{p-1}\left(\|Dv(y)\|^2+(\langle Dv(y),y\rangle-v(y))^2\right)^{\frac{n-q}2}\cdot g(y) 
\end{equation}
holds for $y\in e^\bot$ in a small neighborhood of $o$ where 
$$
g(y)=\left(1+\|y\|^2\right)^{-\frac{n+p}2} f\left(\frac{e+y}{\sqrt{1+\|y\|^2}}\right).
$$

For (i), first we claim that if $p_1,p_2\in S^{n-1}$ are independent and $h_K(p_i)>0$, $i=1,2$, then 
\begin{equation}
\label{hkstrictly-convex}
h_K(sp_1+tp_2)<sh_K(p_1)+t h_K(p_2)
\end{equation}
holds for $s,t>0$. We suppose that the claim does not hold, and hence $h_K(sp_1+tp_2)=sh_K(p_1)+t h_K(p_2)$, and seek a contradiction. For a $z\in\partial K$ such that $sp_1+tp_2\in \nu_K(z)$, we deduce that $p_1,p_2\in \nu_K(z)$. In particular,
$\nu_K(z)$ is an at least one dimensional spherically convex compact set, $h_K(p)=\langle z,p\rangle$ for $p\in \nu_K(z)$, and  there exists a extreme point $e$ of  $\nu_K(z)$ with $h_K(e)>0$. We observe that if $x\in \R^n$, then
\begin{equation}
\label{hK-estimate-equality-e}
h_K(x)\geq \langle z,x\rangle\mbox{ and }h_K(e)= \langle z,e\rangle.
\end{equation}
We choose a $\varrho>0$ such that $h_K(x)>\frac12\,h_K(e)$ if $\|x-e\|<2\varrho$ for $x\in\R^n$, and let $\Omega=\{y\in e^\bot:\,\|y\|< \varrho\}$. It follows that there exists $\theta>0$ depending on $K,\lambda,e,\varrho$ such that the right hand side of \eqref{MongeAmpereRn} is between $1/\theta$ and $\theta$ if $y\in \Omega$.
	Now by the choice of $e$, the set $C$ of all $y\in \Omega$ such that $\|y\|\leq \varrho$ and
	$(1+\|y\|^2)^{-1}(y+e)\in \nu_K(z)$ is an at least one-dimensional convex set such that $o$ is an extreme point. Since 
	\eqref{hK-estimate-equality-e} yields the convex function $\tilde{v}(y)=v(y)-\langle z,e+y\rangle$ satisfies that 
	$\tilde{v}(y)\geq 0$ for $y\in\Omega$,
	$\tilde{v}(y)=0$ for $y\in C$, and the Monge-Amp\`ere measure of $\tilde{v}$ is between $\frac1{\theta}\,d\mathcal{H}^{n-1}$ and $\theta\,d\mathcal{H}^{n-1}$, we have contradicted Lemma~\ref{Caffarelli} (i). In turn, we conclude \eqref{hkstrictly-convex}.
	
	We deduce from \eqref{hkstrictly-convex} that $\partial K\backslash\Gamma_K$ is $C^1$.
	
	Next, for any $e\in S^{n-1}$ with $u(e)>0$, we consider \eqref{MongeAmpereRn} for  $y\in \Omega$ in a suitably small open neighborhood $\Omega$ of $o$ such that
	$v(y)\geq \frac12\,v(o)$ for $y\in \Omega$. Now 
	the right hand side of \eqref{MongeAmpereRn} is between $1/\theta$ and $\theta$ if $y\in \Omega$ where $\theta>0$ depends on $K,\lambda,e,\Omega$. Since
	\eqref{hkstrictly-convex} yields that $v$ is strictly convex, we deduce from Lemma~\ref{Caffarelli} (ii) that after possibly shrinking $\Omega$, we may assume that $v$ is $C^{1,\beta}$ for some $\beta>0$. In turn, it follows that $h_K$ is $C^1$ on $\{h_K>0\}$, and hence 
	$\partial K\backslash\Gamma_K$ contains no segment, completing the proof of (i). 

Concerning $\nu_K(o)$ in (iv) if $o\in\partial K$, we have $0=\widetilde{C}_{p,q,K}(\nu_K(o))\geq \lambda^{-1}\mathcal{H}^{n-1}(\nu_K(o))$ by \eqref{Cq-nuo-zero} and the condition on $\widetilde{C}_{p,q,K}$, and hence $\mathcal{H}^{n-1}(\nu_K(o))=0$.
	
	If, in addition, $f$ is positive and $C^{0,\alpha}$ as in (v), then  the right hand side of \eqref{MongeAmpereRn} is actually
	$C^{0,\gamma}$ for $\gamma=\min\{\alpha,\beta\}\in(0,1)$, therefore, $v$ is  $C^{2,\gamma}$ on $\Omega$ by  Lemma~\ref{Caffarelli} (iii). It follows that the right hand side of \eqref{MongeAmpereRn} is even
	$C^{0,\alpha}$, which then yields that $v$ is locally $C^{2,\alpha}$. In turn, we conclude (v).
	
	Next, we consider (ii) without any differentiability assumption on $f$.
	The point is that if $o\in\partial K$, then ${\rm pos}_+(\partial K\backslash \Gamma_K)={\rm pos}_+{\rm int}\,K$ is an open convex cone, thus
	$$
	\int_{S^{n-1}\backslash {\rm pos}_+(\partial K\backslash \Gamma_K)} \varrho_K^q(x)\,dx=0
	$$
	by the definition of $\varrho_K$, and hence $\widetilde{C}_{n,K}(\omega)=\widetilde{C}_{n,K}(\omega_0)$ where 
	$Z_0=Z\backslash \Gamma_K$ and $\omega_0=\nu_K(Z_0)$. Since 
	$\nu_K^{-1}(\omega_0)=Z_0$ by (i),
	and $\widetilde{V}_K\left(\Gamma_K\right)=0$
	as $h_K(\nu_K(x))=0$ for $x\in\Gamma_K\cap \partial'K$, we conclude (ii) from
	\eqref{dual-curvature-cone-vol}.
	
	For (iii), we note that $\varrho_K\circ\alpha^*_K(p)$ is well-defined if $h_K$ is differentiable at $p\in S^{n-1}$, and hence 
	$\varrho_K\circ\alpha^*_K(p)$ is well-defined for $\mathcal{H}^{n-1}$ a.e. $p\in S^{n-1}$. Now if
	$\omega\subset S^{n-1}$ is measurable with
	$\mathcal{H}^{n-1}(\omega)>0$, then
$$
0<\widetilde{C}_{q,K}(\omega)=\int_{\alpha^*_K(\omega)}\varrho_K^{q}\,d\mathcal{H}^{n-1}
$$
by (iv);	therefore, $\varrho_K\circ\alpha^*_K(p)>0$ for some some $p\in\omega$. We conclude that
	$\varrho_K\circ\alpha^*_K(p)>0$ 
	for $\mathcal{H}^{n-1}$ a.e. $p\in S^{n-1}$, and hence for $\widetilde{C}_{q,K}$ a.e. $p\in S^{n-1}$, as well. Finally, we deduce \eqref{cone-volume-from-CqK} via (ii), completing the proof of
	Lemma~\ref{Gauss-map-bijective}.
	\proofbox

\section{Proof of Theorem~\ref{qcloseton-dualcurvcloseto1}}
\label{sec-qcloseton-dualcurvcloseto1}

The proof of Theorem~\ref{qcloseton-dualcurvcloseto1} will use the observations Corollary~\ref{htodelta-bound}, Lemma~\ref{kn-large-CpqS} and Lemma~\ref{scalar-top-onsphere}.
In addition, Claim~\ref{htodelta-bound0} prepares the argument for  Corollary~\ref{htodelta-bound}.

\begin{claim}
\label{htodelta-bound0}
If $R>0$, then there exists $C>0$ depending on $R$ such that
\begin{equation}
\label{h1delta-h-eq}
|h^{1+\delta}-h|\leq C|\delta|
\end{equation}
for any $\delta\in(-\frac1{4+R},\frac1{4+R})$ and $h\in[0,R]$.
\end{claim}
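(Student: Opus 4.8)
The plan is to obtain the estimate by integrating the $\delta$-derivative of the map $\delta\mapsto h^{1+\delta}$ and then controlling the resulting logarithmic factor. For fixed $h>0$ one has $\frac{d}{dt}h^{1+t}=h^{1+t}\ln h$, so
\[
h^{1+\delta}-h=\int_0^{\delta}h^{1+t}\ln h\,dt ,
\]
and therefore $|h^{1+\delta}-h|\le|\delta|\cdot\sup\bigl\{|h^{1+t}\ln h|:\ t\text{ lies between }0\text{ and }\delta\bigr\}$; for $h=0$ both sides vanish since $1+\delta>0$. Thus it suffices to bound $|h^{1+t}\ln h|$ by a constant depending only on $R$, uniformly over $h\in(0,R]$ and $|t|\le\frac1{4+R}$.

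Since $R>0$ gives $\frac1{4+R}<\frac14$, we have $1+t\in(\tfrac34,\tfrac54)$ on the whole range, which is precisely the role played by the restriction $|\delta|<\frac1{4+R}$: it keeps the exponent bounded away from $0$. I would then split according to the size of $h$. If $R\ge1$ and $h\in[1,R]$, then $h^{1+t}\le R^{2}$ and $0\le\ln h\le\ln R$, so $|h^{1+t}\ln h|\le R^{2}\ln R$. If $h\in(0,1]$, I would use the elementary one-variable inequality $h^{a}(-\ln h)\le\frac1{ea}$ valid for all $h\in(0,1]$ and $a>0$ (the maximum is attained at $h=e^{-1/a}$, as one checks by differentiation), applied with $a=1+t\ge\frac34$; this gives $|h^{1+t}\ln h|\le\frac1{e(1+t)}\le\frac4{3e}$.

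Combining the two ranges yields $|h^{1+t}\ln h|\le C$ for a constant $C$ depending only on $R$ (for instance $C=\max\{R^{2}\ln R,\ 2\}$ works, noting that when $R<1$ the first range is empty), and hence $|h^{1+\delta}-h|\le C|\delta|$ as claimed. The only point that needs any attention is the behaviour as $h\to0^{+}$, where $\ln h$ is unbounded; this is handled by the standard fact that $h^{a}\ln h\to0$ for each fixed $a>0$, made quantitative by the bound $\frac1{ea}$ above together with the uniform lower bound $1+t\ge\frac34$ forced by the hypothesis. There is no substantial obstacle.
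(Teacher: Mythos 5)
Your proof is correct, and the underlying idea is the same as the paper's — both exploit the fact that the product $h^a|\log h|$ stays bounded as $h\to 0^+$ because its maximum on $(0,1]$ occurs at an $h$ bounded away from zero, with $a=1+t$ kept away from $0$ by the restriction $|\delta|<\frac{1}{4+R}$ — but you package the argument through a different identity. The paper factorizes $h^{1+\delta}-h=h\bigl(e^{\delta\log h}-1\bigr)$ and invokes $|e^t-1|\le 2|t|$ for $|t|\le\frac12$; to handle small $h$ it then maximizes $|h-h^{1+\delta}|$ directly over $h\in[0,1]$, locating the critical point $h_\delta=(1+\delta)^{-1/\delta}$ (which lies in $[e^{-1},\frac12]$ for $\delta\in(0,\frac14)$ and in $[e,4]$ for $\delta\in(-\frac14,0)$) and applying the exponential bound at that point. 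You instead start from the fundamental-theorem-of-calculus identity $h^{1+\delta}-h=\int_0^\delta h^{1+t}\ln h\,dt$, reduce to a uniform bound on $\sup|h^{1+t}\ln h|$, and for $h\in(0,1]$ use the one-line inequality $h^a(-\ln h)\le\frac{1}{ea}$ with $a=1+t\ge\frac34$. Your route is a bit more streamlined: it avoids computing the explicit maximizer $h_\delta$ and verifying that $|\delta\log h_\delta|$ stays below $\frac12$, and it handles positive and negative $\delta$ in one stroke. Your separate treatment of $h=0$ (noting that the FTC identity degenerates there but both sides vanish) is also careful and correct. The two arguments are equally elementary; yours trades the paper's exponential-factorization lemma for the standard $h^a\log h$ estimate.
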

\proof We use that
\begin{equation}
\label{h1delta-h}
h^{1+\delta}-h=h(e^{\delta\log h}-1)
\mbox{ \ where $|e^t-1|\leq 2t$ if $|t|\leq \frac12$.}
\end{equation}

If $R>1$ and $h\in[1,R]$, then \eqref{h1delta-h-eq} follows from \eqref{h1delta-h}. Therefore, we may assume that $h\in[0,1]$.

For fixed $\delta\in (0,\frac1{4})$, derivating $f_\delta(h)=h-h^{1+\delta}\geq 0$ shows that $f_\delta$ attains its maximum at $h_\delta=(1+\delta)^{\frac{-1}{\delta}}\in [e^{-1},\frac12]$, thus \eqref{h1delta-h} yields \eqref{h1delta-h-eq}.

For fixed $\delta\in (-\frac1{4},0)$, derivating $f_\delta(h)=h^{1+\delta}-h\geq 0$ shows that $f_\delta$ attains its maximum at $h_\delta=(1+\delta)^{\frac{-1}{\delta}}\in [e,4]$, and hence again \eqref{h1delta-h} yields \eqref{h1delta-h-eq}.
\proofbox

\begin{coro}
\label{htodelta-bound}
If $p<1$ and $R>0$, then there exists a function $\varepsilon(\delta)>0$ of $\delta\in(-\frac1{4+R},\frac1{4+R})$ depending on $p$ and $R$ such that $\lim_{\delta\to 0}\varepsilon(\delta)=0$, and if $h\in[0,R]$, then
\begin{equation}
\label{h1delta-h-eq}
|h^{1-p+\delta}-h^{1-p}|\leq \varepsilon(\delta).
\end{equation}
\end{coro}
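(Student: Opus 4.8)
The plan is to reduce Corollary~\ref{htodelta-bound} to Claim~\ref{htodelta-bound0} --- which is exactly its $p=0$ instance --- by the substitution $t=h^{1-p}$. Since $p<1$ we have $1-p>0$, so $h\mapsto t=h^{1-p}$ is an increasing homeomorphism of $[0,R]$ onto $[0,R^{1-p}]$, and
$$
h^{1-p+\delta}-h^{1-p}=t^{\,1+\delta/(1-p)}-t .
$$
Putting $R'=R^{1-p}$ and $\delta'=\delta/(1-p)$, Claim~\ref{htodelta-bound0} applied with $R'$ in place of $R$ and $\delta'$ in place of $\delta$ supplies a constant $C'>0$ depending only on $R'$, hence only on $p$ and $R$, such that $|t^{\,1+\delta'}-t|\le C'|\delta'|$ for every $t\in[0,R']$, valid whenever $|\delta'|<\frac1{4+R'}$, i.e.\ whenever $|\delta|<\frac{1-p}{4+R^{1-p}}$. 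On this range of $\delta$ the function $\varepsilon(\delta)=\frac{C'}{1-p}\,|\delta|$ --- positive for $\delta\neq0$, while at $\delta=0$ the asserted inequality is trivial, so any positive value serves --- satisfies $|h^{1-p+\delta}-h^{1-p}|\le\varepsilon(\delta)$ for all $h\in[0,R]$, and $\lim_{\delta\to0}\varepsilon(\delta)=0$.

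If $\frac{1-p}{4+R^{1-p}}<\frac1{4+R}$, which does occur for some values of $p\in(-1,1)$ and $R$, one must still define $\varepsilon$ on the leftover set $\frac{1-p}{4+R^{1-p}}\le|\delta|<\frac1{4+R}$. For such $\delta$ one has $1-p+\delta>0$ (this holds as long as $1-p>\frac1{4+R}$, which is automatic once $R$ is large and, more to the point, in every setting where the corollary is invoked, since there the shift $\delta=q-n$ is tiny), so $(h,\delta)\mapsto h^{1-p+\delta}-h^{1-p}$ is continuous --- with value $0$ at $h=0$ --- and hence bounded on the relevant compact set; I would then take $\varepsilon(\delta)$ there equal to $\max_{h\in[0,R]}|h^{1-p+\delta}-h^{1-p}|$ (or simply freeze $\varepsilon$ at its value at $|\delta|=\frac{1-p}{4+R^{1-p}}$ for larger $|\delta|$). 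Since this alters nothing near $\delta=0$, the limit property is preserved.

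There is essentially no genuine obstacle: all the analytic content is already packaged in Claim~\ref{htodelta-bound0}, and the argument is the change of variables $t=h^{1-p}$ plus the bookkeeping above. The one point that deserves attention is that $h\mapsto h^{1-p+\delta}$ is continuous up to $h=0$ exactly when $1-p+\delta>0$; this always holds for $|\delta|$ small --- the only regime relevant to the limit assertion and to the use of the corollary in the proof of Theorem~\ref{qcloseton-dualcurvcloseto1} --- so in practice the first paragraph alone already suffices.
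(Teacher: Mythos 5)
Your proof is correct, and since the paper states Corollary~\ref{htodelta-bound} without writing out a derivation, the substitution $t=h^{1-p}$ (reducing to the $p=0$ case, i.e.\ Claim~\ref{htodelta-bound0} with $R'=R^{1-p}$ and $\delta'=\delta/(1-p)$) is exactly the intended reduction; the algebra $h^{1-p+\delta}=(h^{1-p})^{1+\delta/(1-p)}$ checks out. You also correctly observe a small imprecision in the stated range: when $1-p\le\frac1{4+R}$, the exponent $1-p+\delta$ can become nonpositive inside $(-\frac1{4+R},\frac1{4+R})$, in which case $h^{1-p+\delta}$ is undefined at $h=0$; as you note, this is immaterial for the paper because Corollary~\ref{htodelta-bound} is only invoked with $\delta=q_m-n\to0$, so only the small-$\delta$ range matters, and there $\varepsilon(\delta)=\frac{C'}{1-p}|\delta|$ already does the job.

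One small point in the bookkeeping for the leftover range $\frac{1-p}{4+R^{1-p}}\le|\delta|<\frac1{4+R}$: your primary fix, $\varepsilon(\delta)=\max_{h\in[0,R]}|h^{1-p+\delta}-h^{1-p}|$, is correct whenever $1-p+\delta>0$. Your parenthetical alternative of "freezing" $\varepsilon$ at its value at $|\delta|=\frac{1-p}{4+R^{1-p}}$ would not in general be a valid upper bound, since $\sup_h|h^{1-p+\delta}-h^{1-p}|$ can exceed that frozen constant for larger $|\delta|$; stick with the max. This does not affect the limit assertion or any application of the corollary.
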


Let $\kappa_d=\mathcal{H}^d(B^d)$ for $d\in\N$.

\begin{lemma}
\label{kn-large-CpqS}
If $p<1$, $q>0$, $\eta\in(0,1)$ and $D={\rm diam}\,K$ for a convex body $K\subset \R^n$, $n\geq 1$ such that $o\in K$ and 
$\sigma_{K}+\eta\cdot D\,B^n\subset K$, then
$$
\widetilde{C}_{p,q,K}(S^{n-1})\geq \kappa_{n-1} \eta^{\max\{n-p,q-p\}}D^{q-p}
$$
\end{lemma}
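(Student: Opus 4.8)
The plan is to bound $\widetilde{C}_{p,q,K}(S^{n-1})$ from below by the boundary integral
\[
\widetilde{C}_{p,q,K}(S^{n-1})=\int_{\partial' K}\|x\|^{q-n}\langle x,\nu_K(x)\rangle^{1-p}\,d\mathcal{H}^{n-1}(x),
\]
which is \eqref{lp-dual-curvature-omega-boundary} with $\omega=S^{n-1}$ (valid since $p<1$), restricted to a well-chosen piece of $\partial K$. Set $r=\eta D$, pick $u\in S^{n-1}$ with $u=\sigma_K/\|\sigma_K\|$ if $\sigma_K\neq o$ and $u$ arbitrary otherwise, and let
\[
Z=\{x\in\partial' K:\langle \nu_K(x),u\rangle\geq 0\}.
\]
I will show that on $Z$ the integrand is bounded below by $\eta^{(q-n)_+}D^{q-n}\cdot(\eta D)^{1-p}$, and that $\mathcal{H}^{n-1}(Z)\geq \kappa_{n-1}(\eta D)^{n-1}$, and then simply multiply.

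For the pointwise estimates, note that since $\sigma_K+\eta D\,B^n\subseteq K$ we have $h_K(w)\geq \langle \sigma_K,w\rangle+\eta D$ for every $w\in S^{n-1}$; taking $w=\nu_K(x)$ for $x\in\partial' K$ and using $\langle x,\nu_K(x)\rangle=h_K(\nu_K(x))$ gives $\langle x,\nu_K(x)\rangle\geq \langle\sigma_K,\nu_K(x)\rangle+\eta D$, which on $Z$ (where $\langle\sigma_K,\nu_K(x)\rangle=\|\sigma_K\|\langle u,\nu_K(x)\rangle\geq 0$) is $\geq\eta D$. Hence $\langle x,\nu_K(x)\rangle^{1-p}\geq(\eta D)^{1-p}$ because $1-p>0$. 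Moreover, by Cauchy--Schwarz $\|x\|\geq\langle x,\nu_K(x)\rangle\geq\eta D$, while $\|x\|=\|x-o\|\leq\operatorname{diam}K=D$ since $o,x\in K$; therefore $\|x\|\in[\eta D,D]$ on $Z$, and minimizing $t\mapsto t^{q-n}$ over this interval yields $\|x\|^{q-n}\geq\eta^{(q-n)_+}D^{q-n}$ (the minimum being at $t=\eta D$ if $q\geq n$ and at $t=D$ if $q<n$). For the measure of $Z$: the orthogonal projection $P$ onto $u^{\perp}$ is $1$-Lipschitz and, being a subset of $\partial K$, $Z$ is $(n-1)$-rectifiable, so $\mathcal{H}^{n-1}(Z)\geq\mathcal{H}^{n-1}(P(Z))$; since the "upper'' part of $\partial K$ relative to $u$ projects onto $K|u^{\perp}$ up to an $\mathcal{H}^{n-1}$-null set (alternatively, combine \eqref{SK-phi} with Cauchy's projection formula $\operatorname{vol}_{n-1}(K|u^{\perp})=\tfrac12\int_{S^{n-1}}|\langle w,u\rangle|\,dS_K\leq S_K(\{\langle w,u\rangle\geq0\})$), we get $\mathcal{H}^{n-1}(Z)\geq\operatorname{vol}_{n-1}(K|u^{\perp})\geq\operatorname{vol}_{n-1}\big((\sigma_K+\eta D\,B^n)|u^{\perp}\big)=\kappa_{n-1}(\eta D)^{n-1}$.

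Combining the three estimates,
\[
\widetilde{C}_{p,q,K}(S^{n-1})\geq\int_Z\|x\|^{q-n}\langle x,\nu_K(x)\rangle^{1-p}\,d\mathcal{H}^{n-1}
\geq\eta^{(q-n)_+}D^{q-n}\cdot(\eta D)^{1-p}\cdot\kappa_{n-1}(\eta D)^{n-1},
\]
and the exponent of $D$ is $(q-n)+(1-p)+(n-1)=q-p$, while the exponent of $\eta$ is $(q-n)_+ +(1-p)+(n-1)=\max\{q-n,0\}+n-p=\max\{q-p,\,n-p\}$, which gives exactly the claimed bound $\kappa_{n-1}\eta^{\max\{n-p,q-p\}}D^{q-p}$. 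There is no real obstacle here: the only point requiring a little thought is the choice of $Z$ as the collection of boundary points whose outer normal lies on the $\sigma_K$-side hemisphere, which is precisely what makes the inscribed ball $\sigma_K+\eta D\,B^n$ force $\langle x,\nu_K(x)\rangle\geq\eta D$ while $Z$ still has $(n-1)$-measure comparable to $(\eta D)^{n-1}$; the two-sided bound $\eta D\leq\|x\|\leq D$ on $Z$ is then what produces the maximum in the exponent (the lower bound $\|x\|\geq\eta D$ is needed only when $q>n$, the upper bound $\|x\|\leq D$ only when $q<n$).
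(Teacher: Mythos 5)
Your proof is correct and follows essentially the same strategy as the paper's: restrict the boundary integral \eqref{lp-dual-curvature-omega-boundary} to a well-chosen piece $Z\subset\partial'K$, show $\langle x,\nu_K(x)\rangle\geq\eta D$ and $\eta D\leq\|x\|\leq D$ there, bound $\mathcal{H}^{n-1}(Z)$ from below by $\kappa_{n-1}(\eta D)^{n-1}$, and multiply. The only difference is cosmetic: the paper takes $Z$ to be the graph of $\partial K$ over the disk $\sigma_K+(w^\perp\cap rB^n)$ (and shows its projection onto $w^\perp$ is that disk), whereas you take the larger set of all boundary points whose outer normal lies in the closed hemisphere $\{\langle\cdot,u\rangle\geq 0\}$, and bound its measure either by projecting onto $u^\perp$ or via Cauchy's projection formula. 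Both give the same $\kappa_{n-1}(\eta D)^{n-1}$ and the same pointwise estimates, so the two arguments are interchangeable; yours is marginally slicker in that the inequality $\langle x,\nu_K(x)\rangle\geq\langle\sigma_K,\nu_K(x)\rangle+\eta D$ together with the sign condition immediately yields both the lower bound on the support function and the fact that $Z$ is exactly the set needed for the projection/Cauchy bound.
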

\proof Let $r=\eta D$, and let $w\in S^{n-1}$ and $t\geq 0$ such that $\sigma_K=tw$, and let 
$$
Z=\left\{x\in \partial K:\, x=y+sw\mbox{ for some $y\in \sigma_K+(w^\bot\cap{\rm int}(rB^n))$ and $s>0$.}\right\}
$$
As $Z|w^\bot=w^\bot\cap{\rm int}(rB^n)$, we deduce that
$$
\mathcal{H}^{n-1}(Z)\geq \kappa_{n-1}\eta^{n-1}D^{n-1}.
$$
Now $\sigma_{K}+r\,B^n\subset K$, thus if $x\in Z\cap\partial'K$ where $y+sw$ for some $y\in \sigma_K+(w^\bot\cap{\rm int}(rB^n))$ and $s>0$, then
\begin{align*}
\langle w,\nu_K(x)\rangle= &\frac 1s\langle x-y,\nu_K(x)\rangle\geq 0,\\
\langle x,\nu_K(x)\rangle=&h_K(\nu_K(x))\geq \langle tw+r\cdot \nu_K(x),\nu_K(x)\rangle=
t\langle w,\nu_K(x)\rangle+r\\
\geq &r=\eta\cdot D,\\
\|x\|\geq &\langle x,\nu_K(x)\rangle\geq \eta\cdot D.
\end{align*}
As readily $\|x\|\leq D$, we deduce that
\begin{align*}
\|x\|^{q-n}\geq & \eta^{q-n}\cdot D^{q-n}&\mbox{if }q\geq n,\\
\|x\|^{q-n}\geq &  D^{q-n}&\mbox{if }q\leq n.
\end{align*}
Therefore, \eqref{lp-dual-curvature-omega-boundary} yields that
\begin{align*}
\widetilde{C}_{p,q,K}(S^{n-1}) \geq &
\int_{Z\cap\partial' K} \|x\|^{q-n}\langle x,\nu_K(x)\rangle^{1-p}\,d\mathcal{H}^{n-1}(x)\\
.\geq &\mathcal{H}^{n-1}(Z)\cdot \eta^{\max\{q-n,0\}}D^{q-n}(\eta\cdot D)^{1-p}
\geq \eta^{\max\{n-p,q-p\}}D^{q-p}.
\end{align*}
\mbox{ }\hfill\proofbox

\begin{lemma}
\label{scalar-top-onsphere}
If $s\in(0,1)$, then
$$
\int_{S^{n-1}\backslash e_n^\bot}\langle x,e_n\rangle^{-s}\,d\mathcal{H}^{n-1}(x)<\infty
$$
where $e_n$ is the $n$th basis vector.
\end{lemma}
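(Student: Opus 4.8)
The plan is to pass to spherical coordinates adapted to the axis $e_n$ and reduce the surface integral to a one-dimensional integral that converges precisely because $s<1$. Since the reflection $\sigma\colon x\mapsto x-2\langle x,e_n\rangle\,e_n$ of $\R^n$ restricts to an $\mathcal{H}^{n-1}$-preserving isometry of $S^{n-1}$ that interchanges the two open hemispheres and flips the sign of $\langle x,e_n\rangle$, it suffices to bound
$$
I:=\int_{S^{n-1}_+}\langle x,e_n\rangle^{-s}\,d\mathcal{H}^{n-1}(x),\qquad S^{n-1}_+:=\{x\in S^{n-1}:\langle x,e_n\rangle>0\}
$$
(for $n=1$ the set $S^0\setminus e_1^\bot$ is finite and the statement is trivial, so assume $n\geq 2$).

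On $S^{n-1}_+$ I would use the parametrization $x=\cos\theta\,e_n+\sin\theta\,\xi$ with $\theta\in[0,\tfrac{\pi}{2})$ and $\xi\in S^{n-2}\subset e_n^\bot$, under which $\langle x,e_n\rangle=\cos\theta$ and $d\mathcal{H}^{n-1}(x)=\sin^{n-2}\theta\,d\theta\,d\mathcal{H}^{n-2}(\xi)$, so that
$$
I=\mathcal{H}^{n-2}(S^{n-2})\int_0^{\pi/2}(\cos\theta)^{-s}\sin^{n-2}\theta\,d\theta.
$$
Then, using $0\le\sin^{n-2}\theta\le 1$ together with the elementary inequality $\cos\theta=\sin(\tfrac{\pi}{2}-\theta)\ge\tfrac{2}{\pi}(\tfrac{\pi}{2}-\theta)$ for $\theta\in[0,\tfrac{\pi}{2}]$ (concavity of $\sin$ on $[0,\tfrac{\pi}{2}]$), I would estimate
$$
\int_0^{\pi/2}(\cos\theta)^{-s}\sin^{n-2}\theta\,d\theta\le\Big(\tfrac{\pi}{2}\Big)^{s}\int_0^{\pi/2}\Big(\tfrac{\pi}{2}-\theta\Big)^{-s}\,d\theta=\frac{\pi/2}{1-s}<\infty,
$$
which is finite exactly because $s<1$; hence $I<\infty$, proving the lemma.

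There is essentially no obstacle here; the only points needing care are the correct form of the surface-area element in these coordinates, so that the Jacobian $\sin^{n-2}\theta$ stays bounded near the equator $\theta=\tfrac{\pi}{2}$, and the Jordan-type lower bound on $\cos\theta$, which identifies the singularity of the integrand at the equator with the integrable model $(\tfrac{\pi}{2}-\theta)^{-s}$ whose integrability is equivalent to the hypothesis $s<1$.
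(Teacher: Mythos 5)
Your proof is correct, but it follows a genuinely different route from the paper's. You parametrize the upper hemisphere by the colatitude $\theta$ and the direction $\xi\in S^{n-2}$ in $e_n^\bot$, so that the integral collapses to $\mathcal{H}^{n-2}(S^{n-2})\int_0^{\pi/2}(\cos\theta)^{-s}\sin^{n-2}\theta\,d\theta$, and then you control the singularity at the equator with Jordan's inequality $\cos\theta\ge\frac{2}{\pi}(\frac{\pi}{2}-\theta)$. The paper instead uses the $(-s)$-homogeneity of $\langle x,e_n\rangle^{-s}$ and polar coordinates in $B^n$ to convert the spherical integral into the $n$-dimensional volume integral $\frac{1}{n-s}\int_{B^n}\langle x,e_n\rangle^{-s}\,dx$, and then crudely enlarges $B^n$ to the cube $[-1,1]^n$ so that Fubini reduces everything to $\int_{-1}^1|t|^{-s}\,dt$. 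Both arguments hinge on the same one-dimensional fact that $t^{-s}$ is integrable near $0$ precisely for $s<1$, and both are elementary. Your version requires knowing the surface-area element $\sin^{n-2}\theta\,d\theta\,d\mathcal{H}^{n-2}(\xi)$ on the sphere; the paper's version avoids any explicit spherical Jacobian by trading the sphere for the ball and then the cube, which is arguably a smaller set of prerequisites but a slightly less direct route. Either proof is complete and fine.
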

\proof It follows from using polar coordinates in $B^n$ and the Fubini theorem in $[-1,1]^{n}$ that
\begin{align*}
\int_{S^{n-1}\backslash e_n^\bot}\langle x,e\rangle^{-s}\,d\mathcal{H}^{n-1}(x)=&
\frac1{n-s}\int_{B^{n}\backslash e_n^\bot}\langle x,e_n\rangle^{-s}\,dx\\
\leq& \frac1{n-s}\int_{[-1,1]^n\backslash e_n^\bot}\langle x,e_n\rangle^{-s}\,dx=\frac{2^n}{(n-s)(1-s)}.
\end{align*}
\mbox{ }\hfill\proofbox

The following Theorem~\ref{con-volume-nonexistence} is from 
Saroglou \cite{Sar} and 
B\"or\"oczky, Saroglou \cite{BoS24}. For a  linear $k$-subspace $L\subset \R^n$, $k\in\{1,\ldots,n-1\}$, and a measurable function $\psi:L\cap S^{n-1}\to[0,\infty)$,  we set
$$
{\rm ess\,inf}\,\psi:=\max_{\omega\subset L\cap S^{n-1},\;\mathcal{H}^{k-1}(\omega)=0}\inf \psi\left|_{(L\cap S^{n-1})\backslash\omega}\right. .
$$

\begin{theo}
\label{con-volume-nonexistence}
For $n\geq 3$ and $k\in\{1,\ldots,n-1\}$,  let the linear $k$-subspace $L\subset \R^n$ and $K\in\mathcal{K}^n_o$ satisfy that ${\rm supp}\,V_K\subset L\cap S^{n-1}$, and 
$V_{K}\llcorner (L\cap S^{n-1})$ is absolutely continuous with density function $\psi:L\cap S^{n-1}\to[0,\infty)$.
\begin{description}
\item{(i)} If $n=3,4$, then
\begin{equation}
\label{con-volume-nonexistence-n34}
{\rm ess}\inf \psi=0.
\end{equation}
\item{(ii)} If $n\geq 3$, then $\psi$ can't be an $\mathcal{H}^{k-1}$ a.e. constant function on $L\cap S^{n-1}$.
\end{description}
\end{theo}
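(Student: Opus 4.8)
\medskip
\noindent The plan is to turn the support condition into a structural description of $K$ and then exploit the Brunn--Minkowski concavity of its slice function. First I would observe that $o\in\partial K$: if $o\in{\rm int}\,K$ then $h_K>0$ on $S^{n-1}$, so from $dV_K=\tfrac1n h_K\,dS_K$ the hypothesis $\operatorname{supp}V_K\subset L\cap S^{n-1}$ would force $S_K(S^{n-1}\setminus L)=0$, and then Cauchy's projection formula would give $\HH^{n-1}(K\,|\,v^\perp)=\int_{S^{n-1}}\langle v,w\rangle_+\,dS_K(w)=0$ for $v\in L^\perp\cap S^{n-1}$, contradicting ${\rm dim}\,K=n$. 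Hence the tangent cone $\Sigma:=\overline{{\rm pos}({\rm int}\,K)}$ at $o$ is a pointed $n$-dimensional convex cone, and — unravelling $\operatorname{supp}V_K\subset L\cap S^{n-1}$ again through $dV_K=\tfrac1n h_K\,dS_K$ — for $\HH^{n-1}$-a.e.\ $x\in\partial'K$ either $\nu_K(x)\in L$ or $\langle x,\nu_K(x)\rangle=0$, the first alternative holding exactly on the part of $\partial K$ that is flat in the $L^\perp$-directions (a supporting hyperplane with normal in $L$ contains a whole translate of $L^\perp$).

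Next I would set $C:=\pi_L(K)\subset L$ (a $k$-dimensional body with $o\in C$ and $h_C=h_K$ on $L$) and, for $c\in C$, the fibre $W_c:=(c+L^\perp)\cap K$; for $c\in\partial C$ this fibre is a face of $K$ sitting orthogonally inside a supporting hyperplane, so $\pi_L$ has unit Jacobian there, and slicing $\partial K$ accordingly I would get, for Borel $\omega\subset L\cap S^{n-1}$,
\[
V_K(\omega)=\frac1n\int_{\omega}h_C(\nu)\,\HH^{n-k}\!\bigl(W_{c(\nu)}\bigr)\,dS_C^{(L)}(\nu),
\]
with $c(\nu)\in\partial C$ the point having outer normal $\nu$ inside $L$ and $S_C^{(L)}$ the surface area measure of $C$ in $L$; hence $\psi(\nu)=\tfrac1n h_C(\nu)\,\HH^{n-k}(W_{c(\nu)})\,f_C(\nu)$ a.e., where $f_C:=dS_C^{(L)}/d\HH^{k-1}$ is the curvature function of $C$. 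The crucial point is that $g(c):=\HH^{n-k}(W_c)^{1/(n-k)}$ is a nonnegative \emph{concave} function on $C$ — since $\lambda W_{c_1}+(1-\lambda)W_{c_2}\subset W_{\lambda c_1+(1-\lambda)c_2}$ and Brunn--Minkowski applies in the $(n-k)$-dimensional space $L^\perp$ — that it is positively $1$-homogeneous along the rays of $\Sigma$ issuing from $o$, and that it vanishes on $\partial({\rm pos}\,C)$.

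\medskip
To finish (ii) I would argue: if $\psi\equiv c_0$ a.e.\ then $c_0>0$ (otherwise $|K|=\int\psi=0$), so $h_C>0$ a.e.\ on $L\cap S^{n-1}$, which forces the normal cone of $C$ at $o$ (namely $L\cap\Sigma^{*}$) to be lower-dimensional in $L$; then the identity $h_C\,g^{\,n-k}\,f_C\equiv nc_0$, combined with $\int_{L\cap S^{n-1}}h_C f_C\,d\HH^{k-1}=k|C|_L$ and the rigidity of $g$ (concave, $1$-homogeneous, vanishing on $\partial({\rm pos}\,C)$), is incompatible with $\Sigma$ being pointed — morally, $\psi$ constant would make $\Sigma$ invariant under $O(L)$, but an $O(L)$-invariant convex cone must contain $L^\perp$. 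For (i), with $n=3,4$, I would suppose $\psi\ge c_0>0$ a.e.\ and use $\psi(\nu)\le\tfrac1n h_C(\nu)\,|c(\nu)|^{\,n-k}\bigl(\sup_{{\rm pos}\,C\cap S^{n-1}}g\bigr)^{n-k}f_C(\nu)$ to show $\psi(\nu)\to0$ as $\nu$ tends to a normal direction of $C$ at $o$ (such directions occur since $o\in\partial K$ makes the degenerate configurations unavoidable), giving ${\rm ess}\inf\psi=0$ and contradicting the hypothesis. The hard part, and where $n\le4$ is genuinely used, is controlling $h_C(\nu)f_C(\nu)$ near such a $\nu$ against the possible blow-up of the curvature function at a facet of $C$ through $o$: when $k\le n-1\le3$, the body $C$, its faces, and the fibres $W_c$ are low-dimensional enough that the competing degeneracies can be handled by a direct case analysis. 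Everything else is the structural unravelling of the support condition together with the Brunn--Minkowski concavity of the slice function $g$.
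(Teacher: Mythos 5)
The paper does not prove this theorem itself; it cites it from Saroglou~\cite{Sar} and B\"or\"oczky--Saroglou~\cite{BoS24}, so I can only assess your argument on its own merits rather than against an in-paper proof.

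Your structural set-up is sound in places: the deduction that $o\in\partial K$ is correct, and the Brunn--Minkowski concavity of $g(c)=\HH^{n-k}(W_c)^{1/(n-k)}$ on $C=\pi_L(K)$ is a valid and useful observation. But several of the assertions that the conclusion is hung on are either unjustified or false.

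First, the claim that the tangent cone $\Sigma=\overline{\mathrm{pos}(\mathrm{int}\,K)}$ is \emph{pointed} does not follow from $o\in\partial K$; if $o$ lies in the relative interior of a facet of $K$, $\Sigma$ is a half-space. Nothing in the hypotheses rules this out, and you use pointedness later. Second, the stated ``$1$-homogeneity of $g$ along rays of $\Sigma$'' is false for a bounded body: $W_{\lambda c}=(\lambda c+L^\perp)\cap K$ bears no scaling relation to $W_c$ unless $K$ were itself a cone, and $K$ is compact. Likewise the vanishing of $g$ on $\partial(\mathrm{pos}\,C)$ is unproved and not obviously true ($g(o)=\HH^{n-k}(L^\perp\cap K)^{1/(n-k)}$ can be positive). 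Third, the clinching step of (ii) is admitted to be ``moral'': you claim that $\psi$ constant would force $\Sigma$ to be $O(L)$-invariant (not argued) and that an $O(L)$-invariant convex cone must contain $L^\perp$. The latter is simply false: in $\R^3$ with $k=2$, the pointed cone $\Sigma=\{(x,t): x\in L,\ t\ge\|x\|\}$ is $O(L)$-invariant yet contains only a ray of $L^\perp$. So the claimed rigidity contradiction does not materialize. Finally, for part (i) you reduce to ``a direct case analysis'' in low dimensions without carrying it out; this is precisely where the restriction $n\in\{3,4\}$ must enter and where the real work lies, so leaving it as a black box is the most serious gap of all. The disintegration formula $\psi(\nu)=\tfrac1n h_C(\nu)\HH^{n-k}(W_{c(\nu)})f_C(\nu)$ also presupposes an absolutely continuous surface area measure for $C$ and an invertible spherical image map $c(\nu)$, neither of which is granted by the hypotheses. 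In short: good raw materials, but the argument does not close.
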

\noindent{\bf Remark.} Naturally, (i) yields (ii) if $n=3,4$. However, (i) may fail, for example, if $n=5$ and $k=4$.\\

\noindent{\it Proof of Theorem~\ref{qcloseton-dualcurvcloseto1} when $n\geq 3$ and $\lambda_m$ tends to $1$.}
	We suppose that Theorem~\ref{qcloseton-dualcurvcloseto1} does not hold for some $p\in(-1,1)$, and seek a contradiction. 
It follows that  for some sequence $\lambda_m\in(1,2)$ with $\lim_{m\to\infty}\lambda_m=1$, there exist $q_m\in (n-1,n+1]$ with $\lim_{m\to\infty}q_m=n$, and $K_m\in\mathcal{K}_o^n$ such that 
\begin{align}
\nonumber
d\widetilde{C}_{p,q_m,K_m}=&f_m\,d\mathcal{H}^{n-1}\\
\label{control-shape-lambda-theoproof0}
\frac1{\lambda_m}\leq& f_m\leq \lambda_m\mbox{ for a measurable function }f_m:S^{n-1}\to\R,
\end{align}
and
\begin{description}
\item{(a)} either $\lim_{m\to\infty}{\rm diam}\,K_m=\infty$,
\item{(b)} or $\lim_{m\to\infty}|K_m|=0$.
\end{description}
We may assume that either $q_m\leq n$ for all $m$, or $q_m\geq n$ for all $m$. 

First, we assume that $\lim_{m\to\infty}{\rm diam}\,K_m=\infty$.	
Let $E_{K_m}$ be the origin centered KLS ellipsoid of $K_m-\sigma_{K_m}$ (cf. Kannan, Lovász, Simonovits \cite{KLS95}), and hence
$$
\sigma_{K_m}+E_{K_m}\subset K_m\subset \sigma_{K_m}+nE_{K_m}
$$
for the centroid $\sigma_{K_m}$ of $K_m$,
 and let $r_{1,m}\leq\ldots\leq r_{n,m}$ be the half axes of $E_{K_m}$.
As ${\rm diam}\,K\leq 2nr_{n,m}$, we have 
 	$\lim_{m\to\infty}r_{n,m}= \infty$. Therefore, we may assume that for some $k=\{1,\ldots,n\}$ and $\theta\in(0,1)$, we have
\begin{equation}
\label{control-shape-rkn-nuKo0}
\begin{array}{rcl}
r_{n-k+1,m}&\geq& \theta\cdot r_{n,m},\\[1ex]
{\displaystyle\lim_{m\to\infty}}\frac{r_{n-k,m}}{r_{n,m}}&=&0
\end{array}
\end{equation} 
where the second condition in \eqref{control-shape-rkn-nuKo0} is void if $k=n$.
As the $L_p$  dual curvature measure is invariant under orthogonal transformations, we may also assume that
 there exists a fixed orthonormal basis $e_1,\ldots,e_n$ of $\R^n$ such that $r_{i,m}e_i\in\partial K_m$ for $i=1,\ldots,n$.

As $\widetilde{C}_{p,q_m,K_m}(S^{n-1})$ is bounded by \eqref{control-shape-lambda-theoproof0}, Lemma~\ref{kn-large-CpqS} yields that
\begin{equation}
\label{k-atmost-n-1}
k\leq n-1.
\end{equation}

We observe that the diagonal linear maps $\Psi_m\in{\rm GL}(n)$ with
\begin{align*}
\Psi_m(e_i)=&\frac{1}{r_{i,m}}\cdot e_i\mbox{ \ if }i=1,\ldots,n-k,\\
\Psi_m(e_i)=&\frac{1}{r_{n,m}}\cdot e_i\mbox{ \ if }i= n-k+1,\ldots,n
\end{align*}
satisfy that $\Psi_m\sigma_{K_m}+\theta B^n\subset \Psi_mK_m\subset 2nB^n$, and hence 
 we may assume that
$$
\lim_{m\to \infty} \Psi_mK_m=K_\infty
$$
for a convex body $K_\infty\in\mathcal{K}^n_o$.
We define $\alpha_m>0$ by
$$
\det \Psi_m=\alpha_mr_{n,m}^{q_m-n-p},
$$
 thus we may also assume that
\begin{equation}
\label{alphamalpha}
\lim_{m\to\infty}\alpha_m=\alpha\in[0,\infty].
\end{equation}

We observe that 
$\frac{1}{r_{n,m}}\, \Psi_m^{-1}$ is a diagonal matrix with entries
\begin{align*}
\lambda_{i,m}=&\frac{r_{i,m}}{r_{n,m}}\mbox{ \ if }i=1,\ldots,n-k,\\
\lambda_{i,m}=&1\mbox{ \ if }i= n-k+1,\ldots,n
\end{align*}
where $\lim_{m\to \infty} \lambda_{i,m}=0$ if $i=1,\ldots,n-k$. 

To simplify the formulas, let $d_m=q_m-n$.  Now Lemma~\ref{Gauss-map-bijective} (i) says that  $h_{K_m}$ is $C^{1}$  on the set $\{h_{K_m}>0\}$, and hence $h_{\Psi_mK_m}$ is $C^{1}$  on the set $\{h_{\Psi_mK_m}>0\}$. For an open $\Omega\subset\R^n$ and $C^1$ function $g:\Omega\to \R$, we write 
$D_ig$ to denote $i$th partial derivative with respect to the basis $e_1,\ldots,e_n$, and hence
$Dg=(D_1g,\ldots,D_ng)$ is the Euclidean gradient. 

For a continuous function $\varphi:S^{n-1}\to S^{n-1}$, we claim that
\begin{align}
\nonumber
\int_{S^{n-1}}\varphi\,dS_{p,K_\infty}=&\lim_{m\to \infty} n\int_{S^{n-1}}\left(d_m^2h_{\Psi_mK_m}^2+\sum_{i=1}^n\lambda_{i,m}^2(D_ih_{\Psi_mK_m})^2\right)^{\frac{d_m}2}\cdot\\
\label{phiVKinfty-limitK-VPsiKm}
&\mbox{ \ \ \ \ \ \ \ \ \ \ \ \ \ \ }\cdot\varphi\cdot h_{\Psi_mK_m}^{-p}\,dV_{\Psi_m K_m}\\
\nonumber
=&\lim_{m\to \infty} \int_{S^{n-1}}\left(d_m^2h_{\Psi_mK_m}^2+\sum_{i=1}^n\lambda_{i,m}^2(D_ih_{\Psi_mK_m})^2\right)^{\frac{d_m}2}\cdot\\
\label{phiVKinfty-limitK-SPsiKm}
&\mbox{ \ \ \ \ \ \ \ \ \ \ \ \ \ \ \ \ }\cdot\varphi\cdot h_{\Psi_mK_m}^{1-p}\,dS_{\Psi_m K_m}.
\end{align}
Here \eqref{phiVKinfty-limitK-VPsiKm} and \eqref{phiVKinfty-limitK-SPsiKm} are equivalent by definition. 

In order to handle the case $d_m=0$ in the proof of \eqref{phiVKinfty-limitK-SPsiKm}, we use the convention $0^0=1$.
Choosing some $R>0$ such that $K_\infty\subset {\rm int}\,RB^n$, we deduce from \eqref{DhK-length-rhoK}, \eqref{DhK-rhoK}, $d_m\to 0$ and 
$\lambda_{i,m}\leq 1$ that if $m$ is large, then 
$$
d_m^2h_{\Psi_mK_m}^2\leq d_m^2h_{\Psi_mK_m}^2+\sum_{i=1}^n\lambda_{i,m}^2(D_ih_{\Psi_mK_m})^2\leq R^2
$$
$\mathcal{H}^2$ a.e. on $S^{n-1}$, and hence
$$
Q_m=h_{\Psi_mK_m}^{1-p}\left(d_m^2h_{\Psi_mK_m}^2+\sum_{i=1}^n\lambda_{i,m}(D_ih_{\Psi_mK_m})^2\right)^{\frac{d_m}2}
$$
satisfies that 
$$
\begin{array}{rclcll}
d_m^{d_m}h_{\Psi_mK_m}^{1+d_m-p}&\leq &Q_m &\leq & R^{d_m}h_{\Psi_mK_m}^{1-p}&\mbox{ if }d_m\geq 0\\[1ex]
R^{d_m}h_{\Psi_mK_m}^{1-p}&\leq &Q_m &\leq & |d_m|^{d_m}h_{\Psi_mK_m}^{1+d_m-p}&\mbox{ if }d_m\leq 0.
\end{array}
$$
As $h_{\Psi_mK_m}$ tends uniformly to $h_{K_\infty}$, Corollary~\ref{htodelta-bound} implies that $Q_m$ tends uniformly to $h_{K_\infty}^{1-p}$. Now $S_{\Psi_m K_m}$ tends weakly to $S_{K_\infty}$ (cf. \eqref{weak-convergence-SK}), proving \eqref{phiVKinfty-limitK-SPsiKm} by \eqref{weak-convergence-SK}. In turn, we conclude the claim \eqref{phiVKinfty-limitK-VPsiKm}.

Applying \eqref{VK-linear-inv-phi} with the diagonal matrix $\Phi=\Psi_m^{-1}$,  \eqref{phiVKinfty-limitK-VPsiKm} implies that
\begin{align*}
&\int_{S^{n-1}}\varphi\,dS_{p,K_\infty}&\\
=&\lim_{m\to \infty} n\det \Psi_m\int_{S^{n-1}}&\left(d_m^2h_{\Psi_mK_m}\left(\frac{\Psi_m^{-1}x}{\|\Psi_m^{-1}x\|}\right)^2+\sum_{i=1}^n\lambda_{i,m}^2D_ih_{\Psi_mK_m}\left(\frac{\Psi_m^{-1}x}{\|\Psi_m^{-1}x\|}\right)^2\right)^{\frac{d_m}2}\\
&&\cdot\varphi\left(\frac{\Psi_m^{-1}x}{\|\Psi_m^{-1}x\|}\right)\cdot h_{\Psi_mK_m}\left(\frac{\Psi_m^{-1}x}{\|\Psi_m^{-1}x\|}\right)^{-p}\,dV_{K_m}(x).
\end{align*}
Here if $h_{K_m}$ is differentiable at a $x\in S^{n-1}$, then
\begin{align*}
h_{\Psi_mK_m}\left(\frac{\Psi_m^{-1}x}{\|\Psi_m^{-1}x\|}\right)=&
\|\Psi_m^{-1}x\|^{-1}h_{K_m}(x),\\
\sum_{i=1}^n\lambda_{i,m}^2D_ih_{\Psi_mK_m}\left(\frac{\Psi_m^{-1}x}{\|\Psi_m^{-1}x\|}\right)^2=&
r_{n,m}^{-2}\left\|\Psi_m^{-1}Dh_{\Psi_mK_m}\left(\frac{\Psi_m^{-1}x}{\|\Psi_m^{-1}x\|}\right)\right\|^2\\
=&r_{n,m}^{-2}\|Dh_{K_m}(x)\|^2,\\
\det \Psi_m=&\alpha_mr_{n,m}^{d_m-p}.
\end{align*}
It follows from \eqref{cone-volume-from-CqK} and \eqref{control-shape-lambda-theoproof0} that
\begin{align}
\label{control-shape-VKmCpqm-lambdam}
dV_{K_m}=&\frac1n\cdot h_{K_m}^{p}\|Dh_{K_m}\|^{-d_m}f_m\,d\mathcal{H}^{n-1}\mbox{ \ where}\\
\label{control-shape-VKmCpqm-lambdam-fm}
\frac1{\lambda_m}\leq &f_m\leq \lambda_m.
\end{align}
We consider the linear $k$-subspace
$$
L={\rm lin}\{e_{n-k+1},\ldots,e_n\},
$$
and observe that $\Phi_m=r_{n,m}^{-1}\Psi_m^{-1}$ satisfies that
\begin{align}
\label{Phim-estimate}
|\langle x,e_n\rangle| \leq& \|\Phi_m x\|\leq 1\mbox{ \ for $x\in S^{n-1}$},\\
\label{Phim-limit}
\lim_{m\to \infty}\Phi_m x=&x|L\mbox{ \ for $x\in S^{n-1}$},\\
\label{Psim-inverse=limit}
\lim_{m\to \infty}\frac{\Psi_m^{-1}x}{\|\Psi_m^{-1}x\|}=&\frac{x|L}{\|x|L\|}\mbox{ \ for $x\in S^{n-1}\backslash L^\bot$}.
\end{align}
We deduce that (cf. \eqref{alphamalpha} for the definition of $\alpha_m$)
\begin{align*}
&\int_{S^{n-1}}\varphi\,dS_{p,K_\infty}&\\
=&\lim_{m\to \infty}n\alpha_m\int_{S^{n-1}}&\left(\frac{r_{n,m}^{2}}{\|\Psi_m^{-1}x\|^2}\cdot d_m^2\cdot h_{K_m}(x)^2+\|Dh_{K_m}(x)\|^2\right)^{\frac{d_m}2}\cdot\\
&&\cdot\varphi\left(\frac{\Psi_m^{-1}x}{\|\Psi_m^{-1}x\|}\right)\cdot 
\frac{\|\Psi_m^{-1}x\|^p}{r_{n,m}^{p}}\cdot h_{K_m}(x)^{-p}\,dV_{K_m}(x)\\
=&\lim_{m\to \infty}\alpha_m\int_{S^{n-1}}&\frac{\left(\|\Phi_mx\|^{-2}\cdot d_m^2\cdot h_{K_m}(x)^2+\|Dh_{K_m}(x)\|^2\right)^{\frac{d_m}2}}{\|Dh_{K_m}(x)\|^{d_m}}\\
&&\cdot\varphi\left(\frac{\Psi_m^{-1}x}{\|\Psi_m^{-1}x\|}\right)\cdot 
\|\Phi_mx\|^{p}\cdot f_m(x)\,d\mathcal{H}^{n-1}(x).
\end{align*}
We fix a $\tilde{p}\in(|p|,1)$, and hence we may assume that $m$ is large enough to ensure that 
$$
|d_m|\leq \tilde{p}-|p|.
$$

According to Lemma~\ref{Gauss-map-bijective} (iv), if $o\in\partial K_m$, then $\mathcal{H}^{n-1}(\nu_{K_m}(o))=0$, and hence there exists a measurable set $\Theta\subset S^{n-1}$ such that  $\mathcal{H}^{n-1}(\Theta)=0$ and
$\|Dh_{K_m}(x)\|\geq h_{K_m}(x)>0$ for $x\in S^{n-1}\backslash \Theta$. In particular, if
 $x\in S^{n-1}\backslash \Theta$, then we set
$$
A_m=\left(1+\frac{\|\Phi_mx\|^{-2}\cdot d_m^2\cdot h_{K_m}(x)^2}{\|Dh_{K_m}(x)\|^2}\right)^{\frac{d_m}2},
$$
and hence $h_{K_m}(x)\leq \|Dh_{K_m}(x)\|$ (cf. \eqref{DhK-rhoK}) and \eqref{Phim-estimate} yield that
\begin{equation}
\label{intphi-Am-estimates}
\left(1+d_m^2\|\Phi_mx\|^{-2}\right)^{\frac{-|d_m|}2}\leq A_m\leq
\left(1+d_m^2\|\Phi_mx\|^{-2}\right)^{\frac{|d_m|}2}\leq 2|\langle x,e_n\rangle|^{-|d_m|}.
\end{equation}
In addition, we have
\begin{align}
\nonumber
&\int_{S^{n-1}}\varphi\,dS_{p,K_\infty}\\
\label{intphiSp-Am}
=&\lim_{m\to \infty}\alpha_m\int_{S^{n-1}}A_m(x)
\cdot\varphi\left(\frac{\Psi_m^{-1}x}{\|\Psi_m^{-1}x\|}\right)\cdot 
\|\Phi_mx\|^{p}\cdot f_m(x)\,d\mathcal{H}^{n-1}(x).
\end{align}
Now $f_m\leq 2$, and there exists some $M>0$ such that $|\varphi|\leq M$, thus \eqref{Phim-estimate} and \eqref{intphi-Am-estimates} imply that if 
$x\in S^{n-1}\backslash \Theta$, then
$$
\left|A_m(x)
\cdot\varphi\left(\frac{\Psi_m^{-1}x}{\|\Psi_m^{-1}x\|}\right)\cdot 
\|\Phi_mx\|^{p}\cdot f_m(x)\right|\leq 4M|\langle x,e_n\rangle|^{-|p|-|d_m|}\leq 
4M|\langle x,e_n\rangle|^{-\tilde{p}};
$$
therefore, we may apply Lebesgue Dominated Convergence Theorem in \eqref{intphiSp-Am} by Lemma~\ref{scalar-top-onsphere}. Since for fixed any 
$x\in S^{n-1}\backslash \Theta$, \eqref{Phim-limit} and \eqref{intphi-Am-estimates} yield that
$$
\lim_{m\to \infty}\left(A_m(x)
\cdot\varphi\left(\frac{\Psi_m^{-1}x}{\|\Psi_m^{-1}x\|}\right)\cdot 
\|\Phi_mx\|^{p}\cdot f_m(x)\right)=
\varphi\left(\frac{x|L}{\|x|L\|}\right)\cdot \|x|L\|^p,
$$
and $S^{n-1}\cap L^\bot\subset \Theta$, we deduce that 
\begin{equation}
\label{intphi-limit}
\int_{S^{n-1}}\varphi\,dS_{p,K_\infty}=\alpha\int_{S^{n-1}}\varphi\left(\frac{x|L}{\|x|L\|}\right)\cdot \|x|L\|^p
\,d\mathcal{H}^{n-1}(x)
\end{equation}
where $0<\alpha<\infty$ follows from the fact that \eqref{intphi-limit} holds, for example, when $\varphi\equiv 1$.
We conclude from \eqref{intphi-limit} and using polar coordinates in $L$ and the coarea formula that if $\varphi:S^{n-1}\to\R$ is any continuous function, then
\begin{align}
\nonumber
&\int_{S^{n-1}}\varphi\,dS_{p,K_\infty}\\
\nonumber
=&(n-k)\kappa_{n-k}\alpha\int_{S^{n-1}\cap L}\int_0^1\varphi(z)\cdot r^{k-1+p}(1-r^2)^{\frac{n-k-2}2}
\,drd\mathcal{H}^{k-1}(z)\\
\label{phiSp-phihk-1}
=&\beta\int_{S^{n-1}\cap L}\varphi\,d\mathcal{H}^{k-1}
\end{align}
where $\beta\in(0,\infty)$ because $k-1+p\geq p>-1$, and $\beta$ is a constant depending on $\alpha,n,k$, and hence is independent of $\varphi$. In turn, 
\eqref{phiSp-phihk-1} yields that $S_{p,K_\infty}$ is concentrated onto $S^{n-1}\cap L$, and the restriction to $S^{n-1}\cap L$ is $\beta$ times the $(k-1)$-dimensional Lebesgue measure. This fact contradicts Theorem~\ref{con-volume-nonexistence} (ii); therefore, there exists $\Gamma>1$ depending on $n$, $p$ and $q$ that ${\rm diam}\,K_m\leq \Gamma$, and hence
we have contradicted the possibility $\lim_{m\to\infty}{\rm diam}\,K_m=\infty$ in (a). We deduce from the indirect hypothesis on Theorem~\ref{qcloseton-dualcurvcloseto1} that condition (b) stating the formula
\begin{equation}
\label{Theorem1-volume}
\lim_{m\to\infty}|K_m|=0
\end{equation}
holds.
As ${\rm diam}\,K_m\leq \Gamma$, we may assume that $K_m$ that tends to a lower dimensional compact convex set $C\subset\R^n$ with $o\in C$. Let $w\in S^{n-1}$ such that $C\subset w^\bot$, and let $\tilde{p}\in (0,1)$ such that $\tilde{p}>p$. We may assume that $|q_m-n|\leq \tilde{p}-p$ for each $m$.

According to \eqref{DhK-rhoK}, if $h_{K_m}(v)>0$ for $v\in S^{n-1}$, then $\left(\frac{\|Dh_K(v)\|}{h_{K_m}(v)}\right)^{\tilde{p}-p}\geq 1$. We apply this observation to formula 
\eqref{CpqK-from-SK} in Lemma~\ref{Gauss-map-bijective}, and deduce that (cf. also \eqref{control-shape-lambda-theoproof0})
\begin{align*}
0<&\limsup_{m\to\infty}\widetilde{C}_{p,q_m,K_m}(S^{n-1})=\limsup_{m\to\infty}\int_{S^{n-1}}h_{K_m}^{1-p}\|Dh_{K_m}\|^{q_m-n}\,dS_{K_m}\\
\leq&\limsup_{m\to\infty}\int_{S^{n-1}}h_{K_m}^{1-\tilde{p}}\|Dh_{K_m}\|^{q_m-n+\tilde{p}-p}\,dS_{K_m}.
\end{align*}
Here $0\leq q_m-n+\tilde{p}-p\leq 2(\tilde{p}-p)$ and $\|Dh_{K_m}\|\leq \Gamma$ for any $v\in S^{n-1}$ where $h_{K_m}$ is differentiable, thus $\|Dh_{K_m}\|^{q_m-n+\tilde{p}-p}\leq \Gamma^{2(\tilde{p}-p)}$. We deduce that
$$
0<\limsup_{m\to\infty}\int_{S^{n-1}}\Gamma^{2(\tilde{p}-p)}h_{K_m}^{1-\tilde{p}}\,dS_{K_m}.
$$
Here $S_{K_m}$ tends weakly to $S_C$ (cf. \eqref{weak-convergence-SK}), and $h_{K_m}$ tends uniformly to $h_C$. Since $t\to t^{1-\tilde{p}}$ is uniformly continuous on $[0,\Gamma]$, it also follows 
$h_{K_m}^{1-\tilde{p}}$ tends uniformly to $h_{C}^{1-\tilde{p}}$; therefore,
\begin{equation}
\label{Theorem1-volume-Ccontradiction}
0<\limsup_{m\to\infty}\int_{S^{n-1}}h_{K_m}^{1-\tilde{p}}\,dS_{K_m}=\int_{S^{n-1}}h_{C}^{1-\tilde{p}}\,dS_{C}.
\end{equation}
Since $C\subset w^\bot$, we deduce that ${\rm supp}\,S_C\subset \{-w,w\}$ (cf. \eqref{SK-dim-n-1}) and $h_{C}(\pm w)^{1-\tilde{p}}=0$. This contradicts  \eqref{Theorem1-volume-Ccontradiction}, and finally proves  
Theorem~\ref{qcloseton-dualcurvcloseto1}.
\proofbox

\noindent{\it Proof of Theorem~\ref{qcloseton-dualcurvcloseto1} when $n=3,4$ and $\lambda_m=\lambda>1$.}
We suppose that the statement about the boundedness of the diameter in Theorem~\ref{qcloseton-dualcurvcloseto1}
does not hold for some $\lambda>1$, and seek a contradiction. We mostly copy the proof of  Theorem~\ref{qcloseton-dualcurvcloseto1} in the case above (when $\lambda_m$ tends to $1$).
Now there exists a sequence $q_m\in (n-1,n+1]$ with $\lim_{m\to\infty}q_m=n$, and a sequence $K_m\in\mathcal{K}_o^n$ such that $\lim_{m\to\infty}{\rm diam}\,K_m=\infty$ and
\begin{align}
\nonumber
d\widetilde{C}_{p,q_m,K_m}=&f_m\,d\mathcal{H}^{n-1}\\
\label{control-shape-lambda-theoprooflambda}
\frac1{\lambda}\leq& f_m\leq \lambda\mbox{ for a measurable function }f_m:S^{n-1}\to\R.
\end{align}
We may assume that either $q_m\leq n$ for all $m$, or $q_m\geq n$ for all $m$. 

Again, let $E_{K_m}$ be the origin centered KLS ellipsoid of $K_m-\sigma_{K_m}$ (cf. Kannan, Lovász, Simonovits \cite{KLS95}), and hence
$$
\sigma_{K_m}+E_{K_m}\subset K_m\subset \sigma_{K_m}+nE_{K_m}
$$
where $\sigma_{K_m}$ is the centroid of $K_m$,
 and let $r_{1,m}\leq\ldots\leq r_{n,m}$ be the half axes of $E_{K_m}$.
As ${\rm diam}\,K\leq 2nr_n$, we have 
 	$\lim_{m\to\infty}r_{n,m}= \infty$. Therefore, we may assume that for some $k=\{1,\ldots,n\}$ and $\theta\in(0,1)$, we have
$$
\begin{array}{rcl}
r_{n-k+1,m}&\geq& \theta\cdot r_{n,m},\\[1ex]
{\displaystyle\lim_{m\to\infty}}\frac{r_{n-k,m}}{r_{n,m}}&=&0
\end{array}
$$
where the second condition  is void if $k=n$.
As the $L_p$  dual curvature measure is invariant under orthogonal transformations, we may also assume that
 there exists a fixed orthonormal basis $e_1,\ldots,e_n$ of $\R^n$ such that $r_{i,m}e_i\in\partial K_m$.
Since $\widetilde{C}_{p,q_m,K_m}(S^{n-1})$ is bounded by \eqref{control-shape-lambda-theoprooflambda}, Lemma~\ref{kn-large-CpqS} yields that
$$
k\leq n-1.
$$
We define $\Theta\subset S^{n-1}$, $\Psi_m\in{\rm GL}(n)$ and  $\lambda_{i,m}$ as above, and again, we may assume that
$$
\lim_{m\to \infty} \Psi_mK_m=K_\infty
$$
for a convex body $K_\infty\in\mathcal{K}^n_o$.
We define $\alpha_m>0$ by
$$
\det \Psi_m=\alpha_mr_{n,m}^{d_m-p}
$$
where $d_m=q_m-n$, thus we may also assume that
$$
\lim_{m\to\infty}\alpha_m=\alpha\in[0,\infty].
$$
Instead of \eqref{control-shape-VKmCpqm-lambdam} and \eqref{control-shape-VKmCpqm-lambdam-fm}, we have
\begin{align}
\label{control-shape-VKmCpqm-lambda}
dV_{K_m}=&\frac1n\cdot h_{K_m}^{p}\|Dh_{K_m}\|^{-d_m}f_m\,d\mathcal{H}^{n-1}\mbox{ \ where}\\
\label{control-shape-VKmCpqm-lambda-fm}
\frac1{\lambda}\leq &f_m\leq \lambda.
\end{align}
Analogously to the argument leading to \eqref{intphiSp-Am}, we deduce that
if $\varphi:S^{n-1}\to [0,\infty)$ is continuous, then
\begin{align}
\nonumber
&\int_{S^{n-1}}\varphi\,dS_{p,K_\infty}\\
\label{intphiSp-Am-lambda}
=&\lim_{m\to \infty}\alpha_m\int_{S^{n-1}}A_m(x)
\cdot\varphi\left(\frac{\Psi_m^{-1}x}{\|\Psi_m^{-1}x\|}\right)\cdot 
\|\Phi_mx\|^{p}\cdot f_m(x)\,d\mathcal{H}^{n-1}(x).
\end{align}
where if $x\in S^{n-1}\backslash\Theta$, then
\begin{align}
\label{Phim-estimate-lambda}
|\langle x,e_n\rangle|\leq &\|\Phi_mx\|\leq 1,\\
\label{Phim-limit-lambda1}
\lim_{m\to \infty}\Phi_m x=&x|L\mbox{ \ for $x\in S^{n-1}$},\\
\label{Phim-limit-lambda2}
\lim_{m\to \infty}\frac{\Psi_m^{-1}x}{\|\Psi_m^{-1}x\|}=&\frac{x|L}{\|x|L\|}\mbox{ \ for $x\in S^{n-1}\backslash L^\bot$},\\
\label{intphi-Am-estimates-lambda}
\left(1+d_m^2\|\Phi_mx\|^{-2}\right)^{\frac{-|d_m|}2}\leq& A_m\leq
\left(1+d_m^2\|\Phi_mx\|^{-2}\right)^{\frac{|d_m|}2}\leq 2|\langle x,e_n\rangle|^{-|d_m|}.
\end{align}
We fix a $\tilde{p}\in(|p|,1)$, and hence we may assume that $m$ is large enough to ensure that 
$|d_m|\leq \tilde{p}-|p|$. Now $f_m\leq \lambda$, and there exists some $M>0$ such that $\varphi\leq M$, thus \eqref{Phim-estimate-lambda} and \eqref{intphi-Am-estimates-lambda} imply that if 
$x\in S^{n-1}\backslash \Theta$, then
$$
\left|A_m(x)
\cdot\varphi\left(\frac{\Psi_m^{-1}x}{\|\Psi_m^{-1}x\|}\right)\cdot 
\|\Phi_mx\|^{p}\cdot f_m(x)\right|\leq  
2M\lambda|\langle x,e_n\rangle|^{-\tilde{p}};
$$
therefore, we may apply Lebesgue Dominated Convergence Theorem in \eqref{intphiSp-Am-lambda} by Lemma~\ref{scalar-top-onsphere}. Since for fixed any 
$x\in S^{n-1}\backslash \Theta$,  the formulas \eqref{Phim-limit-lambda1}, \eqref{Phim-limit-lambda2} and \eqref{intphi-Am-estimates-lambda} yield that
$$
\lim_{m\to \infty}\left(A_m(x)
\cdot\varphi\left(\frac{\Psi_m^{-1}x}{\|\Psi_m^{-1}x\|}\right)\cdot 
\|\Phi_mx\|^{p}\right)=
\varphi\left(\frac{x|L}{\|x|L\|}\right)\cdot \|x|L\|^p,
$$
and $S^{n-1}\cap L^\bot\subset \Theta$, we deduce from $f_m\leq \lambda$  that
\begin{equation}
\label{intphi-upper-lambda}
\int_{S^{n-1}}\varphi\,dS_{p,K_\infty}\leq \alpha\lambda\int_{S^{n-1}}\varphi\left(\frac{x|L}{\|x|L\|}\right)\cdot \|x|L\|^p
\,d\mathcal{H}^{n-1}(x),
\end{equation}
and from $f_m\geq 1/\lambda$  that
\begin{equation}
\label{intphi-lower-lambda}
\int_{S^{n-1}}\varphi\,dS_{p,K_\infty}\geq \frac{\alpha}{\lambda}\int_{S^{n-1}}\varphi\left(\frac{x|L}{\|x|L\|}\right)\cdot \|x|L\|^p
\,d\mathcal{H}^{n-1}(x).
\end{equation}
Applying \eqref{intphi-upper-lambda} and  \eqref{intphi-lower-lambda} to $\varphi\equiv 1$ and $p>-1$
show that $0<\alpha<\infty$.
As above, we also deduce from \eqref{intphi-upper-lambda} and  \eqref{intphi-lower-lambda} the existence of a $\beta>0$ - that is a constant depending on $\alpha,n,k$, - such that if $\varphi:S^{n-1}\to[0,\infty)$ is any continuous function, then
\begin{align}
\label{phiSp-phihk-1-upper} 
\int_{S^{n-1}}\varphi\,dS_{p,K_\infty}\leq &\beta\lambda \int_{S^{n-1}\cap L}\varphi\,d\mathcal{H}^{k-1},\\
\label{phiSp-phihk-1-lower} 
\int_{S^{n-1}}\varphi\,dS_{p,K_\infty}\geq &\frac{\beta}{\lambda} \int_{S^{n-1}\cap L}\varphi\,d\mathcal{H}^{k-1}.
\end{align}
In turn, 
\eqref{phiSp-phihk-1-upper} yields that $S_{p,K_\infty}$ is concentrated onto $S^{n-1}\cap L$, and there exists
bounded measurable $\psi:\,S^{n-1}\cap L\mapsto [0,\infty)$ such that for any Borel set $\omega\subset S^{n-1}$, we have
$$
S_{p,K_\infty}(\omega)=\int_{\omega\cap L}\psi\,d\mathcal{H}^{k-1}.
$$
Therefore, \eqref{phiSp-phihk-1-lower} contradicts Theorem~\ref{con-volume-nonexistence} (i), and hence for  $n= 3,4$, $p\in(-1,1)$ and $\lambda>1$, there exist $\varepsilon_0>0$ and $\Gamma>1$ depending on $n$, $p$ and $\lambda$ such that if, $n-\varepsilon_0<q< n+\varepsilon_0$ and a convex body $K\in \mathcal{K}^n_o$ satisfies that
\begin{equation}
\label{curvature-closeto-one-condagain}
\lambda^{-1}\mathcal{H}^{n-1}\leq \widetilde{C}_{p,q,K}\leq \lambda\mathcal{H}^{n-1},
\end{equation}
then ${\rm diam}\,K\leq \Gamma$. 

Finally, by considering the indirect assumption as in \eqref{Theorem1-volume} and the argument following \eqref{Theorem1-volume},  one can show the existence of an $\eta>0$ depending on $n$, $p$, $\Gamma$ and $\lambda$ such that if \eqref{curvature-closeto-one-condagain} holds for $K\in\mathcal{K}^n_o$, then $|K|\geq \eta$, completing the proof of Theorem~\ref{qcloseton-dualcurvcloseto1} when $n=3,4$ and $\lambda_m=\lambda>1$.
\proofbox

\section{The basic estimate for $o$-symmetric convex bodies when $-1<p<q<\min\{n,n+p\}$ and $q>0$}
\label{sec-basic-estimate-symmetric}

In this section, we prepare for the proof of Theorem~\ref{lambda-Cpq-symmetric} in Section~\ref{sec-lambda-Cpq-symmetric}. We start with two simple observations.

\begin{claim}
\label{hK-int-max}
For $\gamma>0$ and $n\geq 2$,
there exists $c>0$ depending  on $\gamma$ and $n$ such that if $K\in\mathcal{K}^n_o$, then
\begin{equation}
\label{hK-int-max-eq}
c\cdot\max_{v\in S^{n-1}}h_K(v)^\gamma\leq \int_{S^{n-1}}h_K^\gamma\,d\mathcal{H}^{n-1}\leq n\kappa_n\cdot\max_{v\in S^{n-1}}h_K(v)^\gamma.
\end{equation}
\end{claim}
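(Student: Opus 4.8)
The plan is to prove the two inequalities in \eqref{hK-int-max-eq} separately; the right-hand one is immediate, and the left-hand one follows by localising the integral to a spherical cap whose size depends only on $n$. Throughout write $u_0=\max_{v\in S^{n-1}}h_K(v)$, and note $u_0>0$ since $K$ has non-empty interior.

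For the upper bound I would simply use $h_K(v)^\gamma\leq u_0^\gamma$ for all $v\in S^{n-1}$ and integrate over $S^{n-1}$; since $\mathcal{H}^{n-1}(S^{n-1})=n\kappa_n$, this gives $\int_{S^{n-1}}h_K^\gamma\,d\mathcal{H}^{n-1}\leq n\kappa_n\,u_0^\gamma$.

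For the lower bound, choose $w_0\in S^{n-1}$ with $h_K(w_0)=u_0$ and a point $x_0\in K$ with $\langle w_0,x_0\rangle=h_K(w_0)=u_0$, so that $\|x_0\|\geq\langle w_0,x_0\rangle=u_0$; set $e=x_0/\|x_0\|$ and let $A=\{v\in S^{n-1}:\langle v,e\rangle\geq\tfrac12\}$, a cap of angular radius $\pi/3$ whose measure $c_n:=\mathcal{H}^{n-1}(A)$ is a positive constant depending only on $n$ (it is independent of $e$ by rotational invariance of $\mathcal{H}^{n-1}$). For every $v\in A$, the definition of $h_K$ together with $x_0\in K$ gives
\[
h_K(v)\ \geq\ \langle v,x_0\rangle\ =\ \|x_0\|\,\langle v,e\rangle\ \geq\ \tfrac12\|x_0\|\ \geq\ \tfrac12\,u_0 ,
\]
hence $h_K(v)^\gamma\geq 2^{-\gamma}u_0^\gamma$ on $A$, and therefore
\[
\int_{S^{n-1}}h_K^\gamma\,d\mathcal{H}^{n-1}\ \geq\ \int_A h_K^\gamma\,d\mathcal{H}^{n-1}\ \geq\ c_n\,2^{-\gamma}\,u_0^\gamma .
\]
Thus the left-hand inequality of \eqref{hK-int-max-eq} holds with $c=c_n\,2^{-\gamma}$, which depends only on $n$ and $\gamma$.

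I do not anticipate a genuine obstacle. The only subtlety is that one should localise on the fixed-size cap $A$ about $e=x_0/\|x_0\|$ rather than on the slab $\{v:\langle v,x_0\rangle\geq u_0/2\}$, which a priori could be very thin; the passage is legitimate precisely because $\|x_0\|\geq u_0$, so that $u_0/(2\|x_0\|)\leq\tfrac12$ and the slab contains $A$. If one wanted $c_n$ explicitly, it is computable from the standard spherical-cap area formula, but this is not needed.
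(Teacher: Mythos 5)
Your proof is correct and follows essentially the same route as the paper: both pick a point $x_0\in K$ with $\|x_0\|\geq\max_v h_K(v)$, set $e=x_0/\|x_0\|$, and use $h_K(v)\geq\langle v,x_0\rangle$ to localise the integral near $e$. The only cosmetic difference is that the paper integrates $\langle v,e\rangle^\gamma$ over the full hemisphere $\{\langle v,e\rangle\geq 0\}$ to produce the constant, whereas you restrict to the cap $\{\langle v,e\rangle\geq\frac12\}$ and bound $\langle v,e\rangle^\gamma\geq 2^{-\gamma}$ there.
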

\proof The upper bound in \eqref{hK-int-max-eq} readily holds. For the lower bound, let $x_0\in\partial K$ satisfy that $\|x_0\|=\max_{x\in K}\|x\|$. Then $K\subset \|w\|\,B^n$ for $w=x_0/\|x_0\|$, and hence $h_K(w)=\max_{v\in S^{n-1}}h_K(v)$. For $S_+^{n-1}=\{v\in S^{n-1}:\langle v,w\rangle\geq 0\}$, it follows that
$$
\int_{S^{n-1}}h_K^\gamma\,d\mathcal{H}^{n-1}\geq \int_{S_+^{n-1}}\langle v,x_0\rangle^\gamma\,d\mathcal{H}^{n-1}(v)=\int_{S_+^{n-1}}\langle v,w\rangle^\gamma\,d\mathcal{H}^{n-1}\cdot \|x_0\|^\gamma,
$$
proving the lower bound in \eqref{hK-int-max-eq}. 
\proofbox

For $q\in(0,1)$, Lemma~\ref{qth-intrvol-ellipsoid} presents some estimates that are dual to some
results by  Chen \cite{HCh18} about the so-called 
 $q$th intrinsic volume of an ellipsoid  (see also the recent preprint B\"or\"oczky, Kov\'acs, Mui, Zhang \cite{BKMZ} for  more exact estimates). 

\begin{lemma}
\label{qth-intrvol-ellipsoid} 
For $s\in(0,1)$ and $n\geq 2$, there exist $C_0,C_1,c_0>0$ depending on $s$ and $n$ with the following properties: 
 If $a_1\leq\ldots\leq a_n$ are the half axes of a centered ellipsoid $E\subset \R^n$, and $a_ne_n\in \partial E$ for $e_n\in S^{n-1}$, then
\begin{description}
\item{(i)} 
$\displaystyle  c_0 \cdot a_n^{-s}\leq \int_{S^{n-1}}h_E^{-s}\,d\mathcal{H}^{n-1}\leq C_0 \cdot a_n^{-s}$.

\item{(ii)} For $\xi\in(0,\frac12)$ and $\Gamma_\xi=\{x\in S^{n-1}:|\langle x,e_n\rangle|\leq \xi\}$, we have
\begin{equation}
\label{qth-intrvol-ellipsoid0}
 \int_{\Gamma_\xi}h_E^{-s}\,d\mathcal{H}^{n-1}\leq C_1 \cdot\xi^{\frac{1-s}{2-s}}\cdot a_n^{-s} .
\end{equation}
\end{description}
\end{lemma}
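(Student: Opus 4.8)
The plan is to prove Lemma~\ref{qth-intrvol-ellipsoid} by direct computation, reducing the integral over $S^{n-1}$ to an integral over the ball $B^n$ via the identity used in Lemma~\ref{scalar-top-onsphere}, and then exploiting the explicit form of the support function of an ellipsoid. Recall that if $E$ has half-axes $a_1\le\dots\le a_n$ along an orthonormal basis $e_1,\dots,e_n$, then $h_E(x)=\bigl(\sum_{i=1}^n a_i^2 x_i^2\bigr)^{1/2}$ for $x\in S^{n-1}$. As in the proof of Lemma~\ref{scalar-top-onsphere}, for any nonnegative measurable $\varphi$ that is $0$-homogeneous one has
$$
\int_{S^{n-1}}\varphi\,d\mathcal{H}^{n-1}=n\int_{B^n}\varphi\,dx,
$$
and more generally integrating $h_E^{-s}$, which is $1$-homogeneous to the power $-s$, against the sphere can be compared to the integral of $\bigl(\sum a_i^2 x_i^2\bigr)^{-s/2}$ over $B^n$ after a polar-coordinate bookkeeping; the upshot is that $\int_{S^{n-1}}h_E^{-s}\,d\mathcal{H}^{n-1}$ is comparable, up to constants depending only on $n$ and $s$, to $\int_{B^n}\bigl(\sum_{i=1}^n a_i^2 x_i^2\bigr)^{-s/2}\,dx$.

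For part (i), the upper bound is immediate from $h_E(x)\ge a_n|x_n|$, giving $\int_{S^{n-1}}h_E^{-s}\le a_n^{-s}\int_{S^{n-1}}|x_n|^{-s}\,d\mathcal{H}^{n-1}$, which is finite by Lemma~\ref{scalar-top-onsphere}; this yields the constant $C_0$. The lower bound is the more delicate direction: here I would use $h_E(x)\le \bigl(\sum_i a_i^2\bigr)^{1/2}\|x\|\le \sqrt{n}\,a_n$ on $S^{n-1}$, so $h_E^{-s}\ge n^{-s/2}a_n^{-s}$ everywhere, and integrating over $S^{n-1}$ gives the lower bound with $c_0=n^{-s/2}\mathcal{H}^{n-1}(S^{n-1})$. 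So (i) is actually routine once one has the pointwise two-sided bound $a_n|x_n|\le h_E(x)\le \sqrt{n}\,a_n$ on the sphere.

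The real content is part (ii), the decay in $\xi$. On $\Gamma_\xi$ we have $|x_n|\le\xi$, so $\sum_{i<n}x_i^2\ge 1-\xi^2\ge 3/4$, and thus $h_E(x)^2=\sum_{i=1}^n a_i^2x_i^2\ge a_{n-1}^2(1-\xi^2)$. Two regimes must be separated. If $a_{n-1}$ is not too small compared to $a_n$ — say $a_{n-1}\ge \xi^{1/(2-s)}a_n$ — then $h_E\ge a_{n-1}\sqrt{3}/2\ge c\,\xi^{1/(2-s)}a_n$ on $\Gamma_\xi$, so $\int_{\Gamma_\xi}h_E^{-s}\le C\,\xi^{-s/(2-s)}a_n^{-s}\cdot\mathcal{H}^{n-1}(\Gamma_\xi)$, and since $\mathcal{H}^{n-1}(\Gamma_\xi)\le C\xi$ this is bounded by $C\xi^{1-s/(2-s)}a_n^{-s}=C\xi^{(2-2s)/(2-s)}a_n^{-s}$, which is at most $C\xi^{(1-s)/(2-s)}a_n^{-s}$ for $\xi<1/2$; note $(2-2s)/(2-s)\ge (1-s)/(2-s)$. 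In the complementary regime $a_{n-1}<\xi^{1/(2-s)}a_n$, one must be more careful: split $\Gamma_\xi$ according to the size of $|x_n|$. On the sub-slab $|x_n|\le t$ one bounds $h_E\ge a_{n-1}\sqrt{3}/2$ and uses the measure bound $\le Ct$, contributing $\lesssim t\,a_{n-1}^{-s}$; on $t\le|x_n|\le\xi$ one bounds $h_E\ge a_n|x_n|\ge a_n t$, contributing $\lesssim (a_n t)^{-s}\mathcal{H}^{n-1}(\Gamma_\xi)\lesssim a_n^{-s}t^{-s}\xi$. Optimizing the sum $t\,a_{n-1}^{-s}+a_n^{-s}t^{-s}\xi$ over $t$ and using $a_{n-1}<\xi^{1/(2-s)}a_n$ should produce the bound $C_1\xi^{(1-s)/(2-s)}a_n^{-s}$. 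The main obstacle is precisely this optimization: balancing the two slabs requires choosing $t$ of order $\bigl(\xi a_{n-1}^{s}/a_n^{s}\bigr)^{1/(1+s)}$ and then checking, using the constraint on $a_{n-1}/a_n$, that every resulting power of $\xi$ dominates $\xi^{(1-s)/(2-s)}$; keeping track of the interaction between the axis ratio and the slab width, and verifying the exponent $(1-s)/(2-s)$ is the sharp outcome of this balancing (rather than an artifact), is where the care is needed. A cleaner alternative I would also try is to forget slabs and directly estimate $\int_{\Gamma_\xi}\bigl(a_{n-1}^2+a_n^2x_n^2\bigr)^{-s/2}\,d\mathcal{H}^{n-2}(x')\,dx_n$ by integrating out $x'$ first, reducing to a one-dimensional integral $\int_{-\xi}^{\xi}(a_{n-1}^2+a_n^2 r^2)^{-s/2}\,dr$, which is explicitly $\asymp a_n^{-s}\int_0^{\xi}(a_{n-1}/a_n+r)^{-s}\,dr$ and can be evaluated in closed form; comparing this with $\xi^{(1-s)/(2-s)}a_n^{-s}$ in both regimes of $a_{n-1}/a_n$ versus $\xi$ then finishes the proof.
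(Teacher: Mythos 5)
Your treatment of part (i) is correct and close to the paper's: the upper bound via $h_E(x)\ge a_n|x_n|$ and Lemma~\ref{scalar-top-onsphere}, the lower bound from an upper bound $h_E\le \sqrt{n}\,a_n$ (the paper uses the sharper $h_E\le a_n$, but either works).

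Part (ii), however, contains a concrete error that all three of your variants share. You assert that on $\Gamma_\xi$, since $\sum_{i<n}x_i^2\ge 1-\xi^2$, one has $h_E(x)^2=\sum_{i=1}^n a_i^2x_i^2\ge a_{n-1}^2(1-\xi^2)$. That inequality is false for $n\ge 3$: with $a_1\le\dots\le a_n$ you have $\sum_{i<n}a_i^2x_i^2\ge a_1^2\sum_{i<n}x_i^2$, controlled by the \emph{smallest} axis, not $a_{n-1}$. For instance with $n=3$, $a_1=\varepsilon$, $a_2=1$, $a_3=M$, the point $x=(\sqrt{1-\xi^2},0,\xi)\in\Gamma_\xi$ has $h_E(x)^2=\varepsilon^2(1-\xi^2)+M^2\xi^2$, which has nothing to do with $a_2^2(1-\xi^2)$. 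Replacing $a_{n-1}$ by the correct $a_1$ does not rescue the argument either: $a_1$ may be arbitrarily small relative to $a_n$, so the contribution you estimate as $t\,a_{n-1}^{-s}$ (or $t\,a_1^{-s}$) near $x_n=0$, and also the ``cleaner alternative'' one-dimensional integral $\int_{-\xi}^\xi(a_{n-1}^2+a_n^2r^2)^{-s/2}\,dr$, is not controlled in terms of $a_n^{-s}$ alone. The regime split on $a_{n-1}/a_n$ therefore does not close.

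The fix is in fact simpler than what you attempted, and it only uses the bound you already invoked in part (i). From $h_E(x)\ge a_n|x_n|$ and the slice decomposition of $\mathcal H^{n-1}$ on $S^{n-1}$ one gets
$$
\int_{\Gamma_\xi}h_E^{-s}\,d\mathcal H^{n-1}
\le a_n^{-s}\,\mathcal H^{n-2}(S^{n-2})\int_{-\xi}^{\xi}|t|^{-s}(1-t^2)^{\frac{n-3}{2}}\,dt
\le C(n,s)\,a_n^{-s}\,\xi^{1-s},
$$
and since $1-s\ge\frac{1-s}{2-s}$ and $\xi<1$, this already dominates the required $\xi^{(1-s)/(2-s)}$. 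No information about $a_1,\dots,a_{n-1}$ is needed. The paper proceeds differently still: it rewrites $\int_\omega |x_n|^{-s}\,d\mathcal H^{n-1}$ as an integral over the infinite slab $\{|\langle y,e_n\rangle|\le 1\}\cap\mathrm{pos}_+\omega$ in $\R^n$ and splits by $\|z\|\le\xi^{-1/(2-s)}$ versus $\|z\|>\xi^{-1/(2-s)}$ in $e_n^\perp$; this unifies (i) and (ii) and is what produces the specific exponent $\frac{1-s}{2-s}$.
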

\proof The lower bound in (i) directly follows from fact that $h_E(v)\leq a_n$ for $v\in S^{n-1}$.

For the upper bound in (i) and for (ii), we observe that $(B^n+\R e_n)\cap \Sigma\subset \sqrt{2} B^n$, 
and
the use of polar coordinates in $\R^n$ implies that
$$
\int_{\sqrt{2}\,B^n}\|y\|^{s-n}\,dy=\frac{n\kappa_n}{s}\cdot 2^{\frac{s}2}=A.
$$
In particular, for any measurable $\omega\subset S^{n-1}$, we have
\begin{equation}
\label{Aomega}
\int_{\sqrt{2}\,B^n\cap {\rm pos}_+\omega}\|y\|^{s-n}\,dy=A\cdot \frac{\mathcal{H}^{n-1}(\omega)}{n\kappa_n}.
\end{equation}

Let $\Sigma=\{y\in \R^n:\,|\langle y,e_n\rangle|\leq 1\}$, and we observe that $h_E(x)\geq a_n|\langle x,e_n\rangle|$ for $x\in S^{n-1}$. Therefore,  for any measurable $\omega\subset S^{n-1}$, we deduce using polar coordinates that
\begin{align}
\nonumber
\int_{\omega}h_E^{-s}\,d\mathcal{H}^{n-1}\leq& \int_{\omega\backslash e_n^\bot}a_n^{-s}|\langle x,e_n\rangle|^{-s}\,d\mathcal{H}^{n-1}(x)\\
\nonumber
=& \frac{a_n^{-s}}{s}\int_{\omega\backslash e_n^\bot}\int_0^{|\langle x,e_n\rangle|^{-1}}r^{s-n}\cdot r^{n-1}\,drd\mathcal{H}^{n-1}(x)\\
\nonumber
=&\frac{a_n^{-s}}{s}\int_{\Sigma \cap {\rm pos}_+\omega}\|y\|^{s-n}\,dy\\
\label{inthK-s-omega}
\leq&
\frac{a_n^{-s}}{s}\left(\frac{A}{n\kappa_n}\cdot \mathcal{H}^{n-1}(\omega)+\right.\\
\nonumber
&\left.\int_{e_n^\bot\backslash B^n}\mathcal{H}^1\left(\{t:\,z+t e_n\in(\Sigma \cap {\rm pos}_+\omega) \}\right)\cdot \|z\|^{s-n}\,d\mathcal{H}^{n-1}(z)\right).
\end{align}
We deduce from \eqref{inthK-s-omega} with $\omega=S^{n-1}$ and using polar coordinates in $e_n^\bot$ that
\begin{align*}
\int_{S^{n-1}}h_E^{-s}\,d\mathcal{H}^{n-1}\leq &\frac{a_n^{-s}}{s}\left(A+\int_{e_n^\bot\backslash B^n}2\cdot \|z\|^{s-n}\,d\mathcal{H}^{n-1}(z)\right)\\
=&
\frac{a_n^{-s}}{s}\left(A+\frac{2(n-1)\kappa_{n-1}}{1-s}\right)
\end{align*}
verifying the upper bound in (i).

For (ii), we observe that for $\omega=\Gamma_\xi$ in \eqref{inthK-s-omega}, $\xi\leq \frac12$ implies that
\begin{equation}
\label{hK-s-narrow1}
\mathcal{H}^{n-1}(\Gamma_\xi)=(n-1)\kappa_{n-1}\int_{-\xi}^\xi (1-t^2)^{\frac{n-3}2}\,dt\leq
2(n-1)\kappa_{n-1} \left(\frac34\right)^{\frac{n-3}2}\cdot\xi.
\end{equation}
Next,  we set $\tau=\frac1{2-s}$, observe that if $z\in e_n^\bot$ with $1\leq \|z\|\leq \xi^{-\tau}$ and $z+te_n\in{\rm pos}_+\Gamma_\xi$, then
$$
|t|\leq \frac{\xi}{\sqrt{1-\xi^2}}\cdot \xi^{-\tau}\leq 2\xi^{1-\tau}=2\xi^{\frac{1-s}{2-s}}.
$$ 
We deduce that
\begin{align}
\nonumber
&\int_{(e_n^\bot\backslash B^n)\cap \xi^{-\tau}B^n}\mathcal{H}^1\left(\{t:\,z+t e_n\in(\Sigma \cap {\rm pos}_+\Gamma_\xi) \}\right)\cdot \|z\|^{s-n}\,d\mathcal{H}^{n-1}(z)\\
\label{hK-s-narrow2}
\leq &2\xi^{\frac{1-s}{2-s}}\cdot \int_{e_n^\bot\backslash B^n} \|z\|^{s-n}\,d\mathcal{H}^{n-1}(z)=\frac{2(n-1)\kappa_{n-1}}{1-s}\cdot 2\xi^{\frac{1-s}{2-s}}.
\end{align}
Finally, the last estimate we need for \eqref{inthK-s-omega} with $\omega=\Gamma_\xi$ is that if $\|z\|\geq \xi^{-\tau}$, the simply use that $\mathcal{H}^1\left(\{t:\,z+t e_n\in(\Sigma \cap {\rm pos}_+\Gamma_\xi) \}\right)\leq 2$, thus
\begin{align}
\nonumber
&\int_{e_n^\bot\backslash \xi^{-\tau}B^n}\mathcal{H}^1\left(\{t:\,z+t e_n\in(\Sigma \cap {\rm pos}_+\Gamma_\xi) \}\right)\cdot \|z\|^{s-n}\,d\mathcal{H}^{n-1}(z)\\
\nonumber
\leq &2 \int_{e_n^\bot\backslash \xi^{-\tau}B^n} \|z\|^{s-n}\,d\mathcal{H}^{n-1}(z)=\frac{2(n-1)\kappa_{n-1}}{1-s}\cdot \xi^{-\tau(s-1)}\\
\label{hK-s-narrow3}
=&\frac{2(n-1)\kappa_{n-1}}{1-s}\cdot \xi^{\frac{1-s}{2-s}}.
\end{align}
Combining  \eqref{inthK-s-omega}, \eqref{hK-s-narrow1}, \eqref{hK-s-narrow2} and \eqref{hK-s-narrow3} yields \eqref{qth-intrvol-ellipsoid0}, and in turn Lemma~\ref{qth-intrvol-ellipsoid}.
\proofbox

In the rest of section, we fix $\lambda>1$ and $p,q\in\R$ with $-1<p<q<\min\{n,n+p\}$, and consider only $o$-symmetric convex bodies $K\subset\R^n$ such that
\begin{equation}
\label{dual-curvature-bounded-cond}
\frac1{\lambda}\cdot\mathcal{H}^{n-1}\leq \widetilde{C}_{p,q,K}\leq \lambda\mathcal{H}^{n-1}
\end{equation}
on $S^{n-1}$. For two positive functions or quantities $a$ and $b$, we write $a\preceq b$ or $b\succeq a$ if there exists a constant $C\geq 1$ depending on $\lambda$, $n$, $p$, $q$ such that $a\leq Cb$, and $a\approx b$ if 
	$a\preceq b$ and $b\preceq a$. 
	
	For an $o$-symmetric convex body $K\subset\R^n$ satisfying \eqref{dual-curvature-bounded-cond},  let $E$ be the ellipsoid centered at the origin such that 
\begin{equation}
\label{KLS-ellipsoid-symmetric-containments}
E\subset K\subset nE,
\end{equation}
one can choose for example the John ellipsoid (cf. Schneider \cite{Sch14}), or the KLS ellipsoid of $K$ (cf. Kannan, Lovász, Simonovits \cite{KLS95}).
	Let $e_1,...,e_n$ form an orthonormal basis of $\R^n$ that form the principal directions for $E$ and let $r_1\leq\ldots \leq r_n$ hold for the half axes of $E$ where $r_ie_i\in \partial E$, $i=1,\ldots,n$. While $r_1,\ldots,r_n$ obviously depend on $K$, we do not signal this fact in the notation because $K$ will be clear from context.  In particular,
\begin{equation}
\label{diam-approx-rn00}
{\rm diam}\,K\approx r_n \mbox{ \ and \ }
|K|\approx r_1\cdot\ldots\cdot r_n.
\end{equation}

\begin{prop}
\label{rpqnvol}
For  $-1<p<q<\min\{n,n+p\}$, $q>0$ and $\lambda>1$, if an $o$-symmetric convex body $K\subset\R^n$ satisfies \eqref{dual-curvature-bounded-cond}, and $r_1,\ldots,r_n$  are the half axes of the KLS ellipsoid, then
\begin{equation}
\label{rpqnvol-eq}
r_n^{q-n-p}\cdot r_1\cdot\ldots\cdot r_n\approx 1.
\end{equation}
\end{prop}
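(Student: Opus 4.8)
The plan is to estimate the total mass $\widetilde{C}_{p,q,K}(S^{n-1})$ from both sides in terms of $r_1,\ldots,r_n$, exploiting the sandwich $E\subset K\subset nE$ and the formula \eqref{CpqK-from-SK}, and then use \eqref{dual-curvature-bounded-cond} (which forces $\widetilde{C}_{p,q,K}(S^{n-1})\approx 1$) to pin down the product. The natural starting point is \eqref{lp-dual-curvature-phi-cone-vol} with $\varphi\equiv 1$,
$$
\widetilde{C}_{p,q,K}(S^{n-1})=n\int_{S^{n-1}}\|Dh_K\|^{q-n}h_K^{-p}\,dV_K,
$$
together with $\|Dh_K\|=\varrho_K\circ\alpha^*_K\approx r_n$ (the radial function of $K$ is comparable to $r_n$ in the directions carrying most of the volume, and bounded by $2nr_n$ everywhere), and $h_K\approx h_E$ up to the factor $n$. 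Since $q-n<0$, the factor $\|Dh_K\|^{q-n}\approx r_n^{q-n}$, which together with $V_K(S^{n-1})=|K|\approx r_1\cdots r_n$ reduces everything to controlling $\int_{S^{n-1}}h_K^{-p}\,d(\text{normalized }V_K)$, i.e. to showing $\int h_K^{-p}\,dV_K\approx (r_1\cdots r_n)\cdot r_n^{-p}$.

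The estimate $\int h_K^{-p}\,dV_K \approx r_n^{-p}\,|K|$ is where the hypotheses on $p$ and $q$ enter and is the main obstacle. For $p\le 0$ it is immediate from $h_K\approx h_E\le 2nr_n$ and $h_K\succeq$ (nothing is needed on the lower side since $-p\ge 0$), giving both bounds. For $0<p<1$ (hence $0<p<q<n$), the integrand $h_K^{-p}$ blows up near the ``equator'' $\{h_K\text{ small}\}$, and one must show the singularity is integrable with the right order. Here I would pass to the bounding ellipsoid: $\int_{S^{n-1}}h_K^{-p}\,dV_K\le n^{?}\int_{S^{n-1}}h_E^{-p}\,dV_K$, bound $dV_K=\tfrac1n h_K\,dS_K\le \tfrac{2n}{n}r_n\,dS_K$ is too crude, so instead one integrates over $\partial K$ via \eqref{tildeVK-varphi-rhoK}: $\int h_K^{-p}\,dV_K=\tfrac1n\int_{\partial'K}\langle x,\nu_K(x)\rangle^{1-p}\,d\mathcal{H}^{n-1}(x)\approx r_n^{1-p}\cdot\mathcal{H}^{n-1}(\partial K)\approx r_n^{1-p}\cdot r_{n-1}\cdots r_n$ — but this overshoots. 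The cleaner route is Lemma~\ref{qth-intrvol-ellipsoid}(i) applied with $s=p\in(0,1)$: $\int_{S^{n-1}}h_E^{-p}\,d\mathcal{H}^{n-1}\approx r_n^{-p}$; combined with $dV_K\approx h_K\,dS_K$ and a comparison of $S_K$ with $S_E$ (again via $E\subset K\subset nE$ and monotonicity-type estimates for mixed volumes), one extracts $\int h_K^{-p}\,dV_K\approx r_n^{-p}|K|$. The condition $q<n+p$, equivalently $q-n-p<0$... actually $q-n<-p$ wait — note $q-n-p$ is the homogeneity exponent and may have either sign; the condition $p<q$ guarantees $q-p>0$ so $r_n^{q-p}$ is genuinely a positive power, and $q<n$ keeps $q-n<0$ so that $\|Dh_K\|^{q-n}$ is maximized (up to constants) rather than minimized by the bulk of the mass — these two facts are exactly what make both the upper and lower comparisons go through. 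The constraint $q<n+p$ is the one ensuring the lower bound: it keeps $p$ from being so negative relative to $q$ that $h_K^{-p}$ (with $-p$ large positive) overweights the long direction and breaks the comparison $\int h_K^{-p}\,dV_K\succeq r_n^{-p}|K|$; I would verify this by a direct computation on the ellipsoid $E$, splitting $S^{n-1}$ into the ``polar cap'' $\{|\langle x,e_n\rangle|\ge\tfrac12\}$ where $h_E\approx r_n$ and the complement handled by Lemma~\ref{qth-intrvol-ellipsoid}(ii).

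Assembling: from the upper bound in \eqref{dual-curvature-bounded-cond}, $n\int\|Dh_K\|^{q-n}h_K^{-p}\,dV_K\le \lambda n\kappa_n$, and from the lower bound the reverse with $\lambda^{-1}$; substituting $\|Dh_K\|^{q-n}\approx r_n^{q-n}$, $\int h_K^{-p}\,dV_K\approx r_n^{-p}|K|$, and $|K|\approx r_1\cdots r_n$ from \eqref{diam-approx-rn00}, we get $r_n^{q-n}\cdot r_n^{-p}\cdot r_1\cdots r_n\approx 1$, which is precisely \eqref{rpqnvol-eq}. The main technical work, as flagged, is the two-sided estimate $\int_{S^{n-1}}h_K^{-p}\,dV_K\approx r_n^{-p}(r_1\cdots r_n)$ for $0<p<1$, for which Lemma~\ref{qth-intrvol-ellipsoid} together with the ellipsoid sandwich \eqref{KLS-ellipsoid-symmetric-containments} is tailor-made; the role of $q<n$ and $p<q<n+p$ is to make the $\|Dh_K\|^{q-n}$ factor behave like the constant $r_n^{q-n}$ on the relevant part of $S^{n-1}$ in both directions of the inequality.
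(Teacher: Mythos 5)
The central step of your proposal --- replacing $\|Dh_K\|^{q-n}$ by $r_n^{q-n}$ inside the integral, and then showing $\int_{S^{n-1}} h_K^{-p}\,dV_K\approx r_n^{-p}|K|$ --- does not hold, and this is a genuine gap. What is true from $E\subset K\subset nE$ is the one-sided bound $\|Dh_K\|\le nr_n$; since $q-n<0$ this gives $\|Dh_K\|^{q-n}\succeq r_n^{q-n}$ pointwise, i.e.\ only \emph{one} direction of ``$\approx$.'' The reverse, $\|Dh_K\|\succeq r_n$ on the set where $V_K$ lives, fails: for an elongated $K$ a substantial share of the cone volume comes from boundary points at distance $\approx r_1$, not $\approx r_n$, from the origin. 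The companion claim $\int h_K^{-p}\,dV_K\approx r_n^{-p}|K|$ is likewise false as a free-standing statement. For example, for a cigar-shaped ellipsoid $E\subset\R^2$ with half-axes $1$ and $R$ and $p\in(-1,0)$ one computes $\int_{S^1}h_E^{-p}\,dV_E\approx R$ while $r_n^{-p}|E|\approx R^{1-p}\gg R$. Since these auxiliary facts are asserted for general $K$ and then plugged into the tautological identity $\widetilde C_{p,q,K}(S^{n-1})=n\int\|Dh_K\|^{q-n}h_K^{-p}\,dV_K$, the argument as written does not close.

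The paper's proof organizes the same identity the other way around, which is exactly what removes the obstruction. Instead of estimating $\widetilde C_{p,q,K}(S^{n-1})$ as an integral against the $K$-dependent measure $V_K$, it evaluates $V_K(S^{n-1})$ as an integral against $\widetilde C_{p,q,K}$ using \eqref{cone-volume-from-CqK}, and immediately replaces $\widetilde C_{p,q,K}$ by $\mathcal{H}^{n-1}$ via the hypothesis \eqref{dual-curvature-bounded-cond}. This yields $r_1\cdots r_n\approx|K|=V_K(S^{n-1})\approx\int_{S^{n-1}}h_K^{\,p}\|Dh_K\|^{n-q}\,d\mathcal{H}^{n-1}$, an integral against the fixed uniform measure, for which the two \emph{different} one-sided bounds on $\|Dh_K\|$ are used in the two directions: for the lower bound, $\|Dh_K\|\geq h_K$ and $n-q>0$ give $\|Dh_K\|^{n-q}\geq h_K^{n-q}$, hence $\int h_K^{\,p}\|Dh_K\|^{n-q}\geq\int h_K^{\,n+p-q}\approx r_n^{n+p-q}$ by Claim~\ref{hK-int-max} --- and this is precisely where the hypothesis $n+p-q>0$ (i.e.\ $q<n+p$) is used, not where you place it; for the upper bound, $\|Dh_K\|\leq nr_n$ and then $\int h_K^{\,p}\,d\mathcal{H}^{n-1}\preceq r_n^{\,p}$, which is trivial for $p\geq 0$ and follows from $h_K\geq h_E$ plus Lemma~\ref{qth-intrvol-ellipsoid}(i) with $s=-p\in(0,1)$ when $p<0$. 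No single two-sided estimate on $\|Dh_K\|$ is ever needed. If you want to salvage your route, you would have to prove a two-sided bound on $\int h_K^{q-n-p}\,dV_K$, which (unwinding $dV_K$ by the same identity and the hypothesis) brings you back to exactly the integral the paper works with; so the reorganization is not optional but the essential step.
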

\proof On the one hand, we deduce from \eqref{DhK-rhoK} and $q<n$ that $h_K(v)^{n-q}\leq \|Dh_K(v)\|^{n-q}$ for each $v\in S^{n-1}$ where $h_K$ is differentiable, thus applying first \eqref{diam-approx-rn00}, after that \eqref{cone-volume-from-CqK} in Lemma~\ref{Gauss-map-bijective} and \eqref{dual-curvature-bounded-cond}, and finally Claim~\ref{hK-int-max} and  \eqref{diam-approx-rn00} yield that
\begin{align*}
r_1\cdot\ldots\cdot r_n\approx& V_K(S^{n-1})\approx \int_{S^{n-1}} h_K^{p}\cdot \|Dh_K(v)\|^{n-q}\,d\mathcal{H}^{n-1}\\
\geq  &
\int_{S^{n-1}} h_K^{n+p-q}\,d\mathcal{H}^{n-1}\approx r_n^{n+p-q}.
\end{align*}
Therefore, to prove Proposition~\ref{rpqnvol}, all we are left to verify is that
\begin{equation}
\label{rpqnvol-upper}
r_1\cdot\ldots\cdot r_n\preceq  r_n^{n+p-q}.
\end{equation}
As $Dh_K(v)\in \partial K$ for $v\in S^{n-1}$, we have $ \|Dh_K(v)\|^{n-q}\leq (nr_n)^{n-q}$ (cf. \eqref{KLS-ellipsoid-symmetric-containments}), thus again  applying first \eqref{diam-approx-rn00},
after that \eqref{cone-volume-from-CqK} in Lemma~\ref{Gauss-map-bijective}, and finally \eqref{dual-curvature-bounded-cond} imply that
$$
r_1\cdot\ldots\cdot r_n\approx V_K(S^{n-1})\preceq \int_{S^{n-1}} h_K^{p}\cdot r_n^{n-q}\,d\widetilde{C}_{q,K}\preceq
r_n^{n-q}\cdot \int_{S^{n-1}} h_K^{p}\,d\mathcal{H}^{n-1},
$$
and hence \eqref{rpqnvol-upper} follows, if
\begin{equation}
\label{rpqnvol-hKp-upper}
\int_{S^{n-1}} h_K^{p}\,d\mathcal{H}^{n-1}\preceq  r_n^{p}.
\end{equation}
We observe that \eqref{rpqnvol-hKp-upper} readily holds by $h_K\leq nr_n$ (cf. \eqref{KLS-ellipsoid-symmetric-containments}) if $p\geq 0$, so we may assume that $-1<p<0$. In this case, $E\subset K$ and Lemma~\ref{qth-intrvol-ellipsoid} (i) imply that
$$
\int_{S^{n-1}} h_K^{p}\,d\mathcal{H}^{n-1}\preceq \int_{S^{n-1}} h_E^{p}\,d\mathcal{H}^{n-1}\preceq r_n^{p},
$$
 proving \eqref{rpqnvol-upper}, and in turn Proposition~\ref{rpqnvol}.
\proofbox

\section{Proof of Theorem~\ref{lambda-Cpq-symmetric}}
\label{sec-lambda-Cpq-symmetric}

For the convenience of the reader, we recall Theorem~\ref{lambda-Cpq-symmetric} as Proposition~\ref{lambda-Cpq-symmetric-prop}.

\begin{prop}
\label{lambda-Cpq-symmetric-prop}
For  $\lambda>1$, $-1<p<q<\min\{n,n+p\}$ with $q>0$, 
there exists $\Gamma>1$ depending on $n$, $\lambda$, $p$ and $q$, such that if an $o$-symmetric convex body $K\subset\R^n$ satisfies that  $\lambda^{-1}\mathcal{H}^{n-1}\leq \widetilde{C}_{p,q,K}\leq\lambda\mathcal{H}^{n-1}$,
then 
$$
{\rm diam}\,K\leq \Gamma\mbox{ \ and \ }|K|\geq 1/\Gamma.
$$
\end{prop}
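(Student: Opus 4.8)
The plan is to argue by contradiction, reducing everything to the statement $r_n\approx 1$ for the half-axes $r_1\le\dots\le r_n$ of the KLS ellipsoid: indeed, by \eqref{diam-approx-rn00} we have $\operatorname{diam}K\approx r_n$ and $|K|\approx r_1\cdots r_n$, and Proposition~\ref{rpqnvol} gives $r_1\cdots r_n\approx r_n^{\,n+p-q}$ with $n+p-q>0$, so $\operatorname{diam}K\preceq 1$ amounts to $r_n\preceq 1$ and $|K|\succeq 1$ to $r_n\succeq 1$. So I would suppose there is a sequence $K_m$ of $o$-symmetric convex bodies satisfying \eqref{dual-curvature-bounded-cond} with either $r_n(K_m)\to\infty$ or $r_n(K_m)\to 0$, and derive a contradiction in each case, writing $E_m$ for the ellipsoid of $K_m$ as in \eqref{KLS-ellipsoid-symmetric-containments}, with half-axes $r_{1,m}\le\dots\le r_{n,m}$ along a fixed orthonormal basis $e_1,\dots,e_n$ (after rotating and passing to a subsequence).

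\emph{Diameter bound.} Assume $r_n(K_m)\to\infty$. Following the two-scale rescaling from the proof of Theorem~\ref{qcloseton-dualcurvcloseto1}, I fix (along a subsequence) $k$ and $\theta\in(0,1)$ with $r_{n-k+1,m}\ge\theta r_{n,m}$ and $r_{n-k,m}/r_{n,m}\to 0$; Lemma~\ref{kn-large-CpqS} forces $k\le n-1$. Let $L=\operatorname{lin}\{e_{n-k+1},\dots,e_n\}$ and let $\Psi_m$ be the diagonal map sending $e_i\mapsto r_{i,m}^{-1}e_i$ for $i\le n-k$ and $e_i\mapsto r_{n,m}^{-1}e_i$ otherwise, so that $\theta B^n\subset\Psi_mK_m\subset nB^n$ and (along a subsequence) $\Psi_mK_m\to K_\infty\in\mathcal K^n_o$ with $o\in\operatorname{int}K_\infty$; moreover, Proposition~\ref{rpqnvol} now makes $|\det\Psi_m|\approx r_{n,m}^{\,q-n-p}$, so $|\Psi_mK_m|\approx 1$ automatically, without the argument needed in Theorem~\ref{qcloseton-dualcurvcloseto1}. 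The point that replaces the vanishing of the weight $\|Dh\|^{q-n}$ (which is available only when $q\to n$) is: for $\Gamma_\xi=\{v\in S^{n-1}:\|v|L\|\le\xi\}$, the upper bound gives $\widetilde C_{p,q,K_m}(\Gamma_\xi)\le\lambda\,\mathcal H^{n-1}(\Gamma_\xi)\preceq\xi^k$, whereas on $S^{n-1}\setminus\Gamma_\xi$ one has $\|Dh_{K_m}(v)\|\ge h_{K_m}(v)\ge h_{E_m}(v)\ge\theta\xi\,r_{n,m}$ and $\|Dh_{K_m}\|\le\operatorname{diam}K_m\preceq r_{n,m}$ everywhere. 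Feeding this into \eqref{cone-volume-from-CqK}, using $n-q>0$, and — for $-1<p<0$ — bounding $\int_{\Gamma_\xi}h_{K_m}^{\,p}\,d\mathcal H^{n-1}\le\int_{\{|\langle\cdot,e_n\rangle|\le\xi\}}h_{E_m}^{-p}\,d\mathcal H^{n-1}$ by Lemma~\ref{qth-intrvol-ellipsoid}~(ii) with $s=-p$ (valid since $e_n$ is the longest axis and $\Gamma_\xi\subset\{|\langle\cdot,e_n\rangle|\le\xi\}$), one gets $V_{K_m}(\Gamma_\xi)\preceq r_{n,m}^{\,n+p-q}\,\varepsilon(\xi)\approx|K_m|\,\varepsilon(\xi)$ with $\varepsilon(\xi)\to 0$. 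Transporting this by $\Psi_m$ via \eqref{VK-linear-inv}, and using that $(\Psi_m^{-1})_*$ converges uniformly on $S^{n-1}\setminus\Gamma_\xi$ to $v\mapsto v|L/\|v|L\|\in S^{n-1}\cap L$, the weak continuity of the cone volume measure ($V_{\Psi_mK_m}\to V_{K_\infty}$) forces $V_{K_\infty}$ to be concentrated on the proper great subsphere $S^{n-1}\cap L$; since $K_\infty$ is an $n$-dimensional $o$-symmetric body, $h_{K_\infty}>0$ on $S^{n-1}$, hence $S_{K_\infty}$ is concentrated on $S^{n-1}\cap L$ — impossible, as then every projection of $K_\infty$ onto a line in $L^\perp$ would degenerate (equivalently, this contradicts Theorem~\ref{con-volume-nonexistence}). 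Thus $\operatorname{diam}K_m$ is bounded.

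\emph{Volume bound.} Now suppose $|K_m|\to 0$; by the previous step $\operatorname{diam}K_m\le\Gamma$. If $\operatorname{diam}K_m$ stays bounded below along a subsequence, then $r_{n,m}\asymp 1$, and since $|K_m|\approx r_{1,m}\cdots r_{n,m}\to 0$ some half-axes vanish; choosing $j\le n-1$ and $\theta\in(0,1)$ with $r_{j+1,m}\ge\theta$ and $N=\operatorname{lin}\{e_{j+1},\dots,e_n\}$, the set $\{v:\|v|N\|\ge\eta\}$ has $\mathcal H^{n-1}$-measure $\approx 1$ for small $\eta$, and on it $h_{K_m}(v)\ge h_{E_m}(v)\ge\theta\eta$ and $\|Dh_{K_m}(v)\|\ge h_{K_m}(v)\ge\theta\eta$, so that $h_{K_m}^{\,p}$ and $\|Dh_{K_m}\|^{\,n-q}$ are bounded below there by positive constants (for every $p>-1$, since $h_{K_m}$ lies in a fixed compact subinterval of $(0,\infty)$); combining with \eqref{cone-volume-from-CqK} and $\widetilde C_{p,q,K_m}\ge\lambda^{-1}\mathcal H^{n-1}$ gives $dV_{K_m}\succeq d\mathcal H^{n-1}$ on that set, whence $|K_m|=V_{K_m}(S^{n-1})\succeq 1$, a contradiction. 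It remains to exclude $\operatorname{diam}K_m\to 0$: rescaling to $\bar K_m=K_m/\operatorname{diam}K_m$ (diameter $1$, $o$-symmetric, contained in $B^n$) and using $\widetilde C_{p,q,tK}=t^{q-p}\widetilde C_{p,q,K}$ forces $\widetilde C_{p,q,\bar K_m}(S^{n-1})\to\infty$ (as $q>p$); on the other hand $\widetilde C_{q,\bar K_m}(S^{n-1})=\int_{S^{n-1}}\varrho_{\bar K_m}^{\,q}\,d\mathcal H^{n-1}\le n\kappa_n$, which for $p\le 0$ gives $\widetilde C_{p,q,\bar K_m}(S^{n-1})\le\widetilde C_{q,\bar K_m}(S^{n-1})\le n\kappa_n$, a contradiction; for $p>0$ one instead bounds $\widetilde C_{p,q,\bar K_m}(S^{n-1})$ uniformly by splitting $S^{n-1}$ at a level set $\{h_{\bar K_m}<\epsilon\}$, using \eqref{dual-curvature-cone-vol1}, $\|Dh_{\bar K_m}\|\ge h_{\bar K_m}$ and $q<n$ on the small part and weak convergence of $S_{\bar K_m}$ on the large part, with Lemma~\ref{qth-intrvol-ellipsoid} handling the case where a negative power of $h_{\bar K_m}$ must be integrated. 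Once both the diameter upper bound and $|K|\succeq 1$ are in place, the Proposition follows.

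\emph{Main obstacle.} The essential new difficulty relative to Theorem~\ref{qcloseton-dualcurvcloseto1} is that $q$ is fixed and need not be close to $n$, so the weight $\|Dh_K\|^{\,q-n}$ does not disappear under rescaling and cannot be absorbed into a uniformly convergent remainder. The way around it is to exploit $E\subset K\subset nE$, which makes $\|Dh_K\|$ comparable to $r_n$ away from the equatorial band $S^{n-1}\cap L^\perp$, together with the refined near-equatorial integral estimates of Lemma~\ref{qth-intrvol-ellipsoid} in the range $-1<p<0$ and the hypotheses $q<n$ and $q<n+p$ (the latter entering through Proposition~\ref{rpqnvol}, i.e.\ $|K|\approx r_n^{\,n+p-q}$). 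I expect the most delicate points to be making the equatorial estimate $V_{K_m}(\Gamma_\xi)\preceq\varepsilon(\xi)|K_m|$ uniform in $m$ and correctly synchronized with the two-scale rescaling, and — in the volume bound for $p>0$ and $q$ small — controlling the integral of a negative power of $h_K$ against the (uncontrolled) measure $dS_K$, which again is where the ellipsoidal comparison of Lemma~\ref{qth-intrvol-ellipsoid} and the strict inequalities in $-1<p<q<\min\{n,n+p\}$ are indispensable.
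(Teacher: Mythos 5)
Your diameter argument is correct but follows a genuinely different route from the paper. After setting up the KLS ellipsoid, the two scales $k,\theta$, and using Lemma~\ref{kn-large-CpqS} to force $k\le n-1$, the paper stays at the level of $K_m$ itself: it constructs the sets $X_m\subset\partial E_m$ and $Z_m=\partial K_m\cap\operatorname{pos}_+X_m$, shows $\omega_m=\nu_{K_m}(Z_m)\subset\Gamma_{\xi_m}$ with $\xi_m\to 0$ and $V_{K_m}(\omega_m)\succeq r_{1,m}\cdots r_{n,m}$, and then pits the two sides of the rescaled identity \eqref{pq-core-estimate-symmetric} against each other: the right-hand side is $\succeq 1$ using $\|z\|\preceq r_{n,m}$ and $q<n$, while the left-hand side $\int_{\omega_m}(h_{K_m}/(nr_{n,m}))^p\,d\mathcal{H}^{n-1}$ tends to $0$ by Lemma~\ref{qth-intrvol-ellipsoid}~(ii) applied with $s=\max\{\tfrac12,-p\}$. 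You instead carry over the rescaling $\Psi_m$ from the proof of Theorem~\ref{qcloseton-dualcurvcloseto1}, pass to a limit body $K_\infty$, and show $V_{K_\infty}$ is concentrated on $S^{n-1}\cap L$ by proving $V_{K_m}(\Gamma_\xi)\preceq\varepsilon(\xi)|K_m|$ uniformly and transporting through \eqref{VK-linear-inv}. I checked that this does close: for $p\ge 0$ one has $V_{K_m}(\Gamma_\xi)\preceq r_{n,m}^{n+p-q}\xi^k$, for $-1<p<0$ one uses Lemma~\ref{qth-intrvol-ellipsoid}~(ii) with $s=-p$ to get $r_{n,m}^{n+p-q}\xi^{(1+p)/(2+p)}$, both uniform by Proposition~\ref{rpqnvol}; the push-forward of $\{\|v|L^\perp\|\ge\epsilon\}$ under $(\Psi_m)_*$ lands in $\Gamma_{\xi_m}$ with $\xi_m\approx r_{n-k,m}/(\epsilon r_{n,m})\to 0$; and weak continuity of the cone volume measure then kills $V_{K_\infty}$ off $L\cap S^{n-1}$, which is impossible for a full-dimensional body. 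The two routes use the same three ingredients (KLS ellipsoid, Proposition~\ref{rpqnvol}, Lemma~\ref{qth-intrvol-ellipsoid}), but the paper's version avoids the limit, the invariance formula \eqref{VK-linear-inv} and the Portmanteau argument, and so is shorter; yours makes the geometric mechanism (``the cone volume collapses onto a subsphere'') more visible. One small point of hygiene: invoking Theorem~\ref{con-volume-nonexistence} here requires absolute continuity of $V_{K_\infty}\llcorner(L\cap S^{n-1})$, which you have not established; better to keep only your primary observation, that $S_{K_\infty}$ cannot be supported in a proper great subsphere when $K_\infty$ has nonempty interior.

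For the volume lower bound you overcomplicate matters, and the ``$p>0$, $\operatorname{diam}K_m\to 0$'' sub-case you sketch at the end is the one place where the details are not convincing (bounding $\int h_{\bar K_m}^{-p}\,d\widetilde C_{q,\bar K_m}$ from above requires more care than ``splitting at a level set''; here $h_{\bar K_m}^{-p}$ is unbounded near the equator and $\widetilde C_{q,\bar K_m}$ is not dominated by $\mathcal{H}^{n-1}$ in an obvious way). You do not actually need any of this. Proposition~\ref{rpqnvol} together with the trivial inequality $r_1\cdots r_n\le r_n^n$ gives $1\preceq r_n^{q-n-p}r_n^n=r_n^{q-p}$, hence $r_n\succeq 1$ since $q>p$; combined with the diameter upper bound this yields $r_n\approx 1$, and then $|K|\approx r_1\cdots r_n\approx r_n^{n+p-q}\approx 1$. (The paper instead refers back to the degeneration argument following \eqref{Theorem1-volume}, using the diameter bound and the weak convergence $S_{K_m}\to S_C$ onto a pair of antipodal normals; both are fine, but the two-line corollary of Proposition~\ref{rpqnvol} is the cleanest.)
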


\noindent{\it Proof of Proposition~\ref{lambda-Cpq-symmetric-prop}.}
	We suppose that the statement about the boundedness of the diameter in Proposition~\ref{lambda-Cpq-symmetric-prop} does not hold for some $-1<p<q<\min\{n,n+p\}$, $q>0$ and $\lambda>1$,
 and seek a contradiction. 
It follows that  for some sequence $K_m\subset\R^n$ of $o$-symmetric convex bodies such that $\lim_{m\to\infty}{\rm diam}\,K_m=\infty$, we have
\begin{equation}
\label{control-shape-lambda-theoproof00}
\frac1{\lambda}\,\mathcal{H}^{n-1}\leq\widetilde{C}_{p,q,K_m}\leq \lambda \mathcal{H}^{n-1}.
\end{equation}

Let $E_{K_m}$ be the origin centered KLS ellipsoid of John ellipsoid of $K_m$, and hence
\begin{equation}
\label{KLS-ellipsoid-symmetric}
\E_{K_m}\subset K_m\subset nE_{K_m},
\end{equation} 
and let $r_{1,m}\leq\ldots\leq r_{n,m}$ be the half axes of $E_{K_m}$.
As the $L_p$  dual curvature measure is invariant under orthogonal transformations, we may also assume that
 there exists a fixed orthonormal basis $e_1,\ldots,e_n$ of $\R^n$ such that $r_{i,m}e_i\in\partial K_m$ for each $m$ and $i=1,\ldots,n$.
It follows from ${\rm diam}\,K_m\leq 2nr_{n,m}$ (cf. \eqref{KLS-ellipsoid-symmetric})  and $\lim_{m\to\infty}{\rm diam}\,K_m= \infty$ that
\begin{equation}
\label{rnm-to-infty-symmetric}
\lim_{m\to\infty}r_{n,m}= \infty.
\end{equation}
According to \eqref{rnm-to-infty-symmetric}, we may assume that for some $k=\{1,\ldots,n\}$ and $\theta\in(0,1)$, we have
\begin{equation}
\label{control-shape-rkn-nuKo00}
\begin{array}{rcl}
r_{n-k+1,m}&\geq& \theta\cdot r_{n,m},\\[1ex]
{\displaystyle\lim_{m\to\infty}}\frac{r_{n-k,m}}{r_{n,m}}&=&0
\end{array}
\end{equation} 
where the second condition in \eqref{control-shape-rkn-nuKo00} is void if $k=n$.

For two positive functions or quantities $a$ and $b$, we write $a\preceq b$ or $b\succeq a$ if there exists a constant $C\geq 1$ depending on $\lambda$, $n$, $p$, $q$ such that $a\leq Cb$, and $a\approx b$ if 
	$a\preceq b$ and $b\preceq a$. 
We deduce  from the basic estimate \eqref{rpqnvol-eq} that
\begin{equation}
\label{rpqnmvol-eq}
r_{n,m}^{q-n-p}\cdot r_{1,m}\cdot\ldots\cdot r_{n,m}\approx 1.
\end{equation}

We claim that actually
\begin{equation}
\label{k-atmost-n-100}
k\leq n-1.
\end{equation}
We suppose that \eqref{k-atmost-n-100} does not hold - that is, $r_{i,m}\geq \theta\cdot r_{n,m}$ for $i=1,\ldots,n-1$, - and seek a contradiction. In this case, \eqref{rnm-to-infty-symmetric} yields that
$$
\lim_{m\to\infty}r_{n,m}^{q-n-p}\cdot r_{1,m}\cdot\ldots\cdot r_{n,m}\geq
\theta^{n-1}\lim_{m\to\infty}r_{n,m}^{q-p}=\infty,
$$
contradicting \eqref{rpqnmvol-eq}, and verifying \eqref{k-atmost-n-100}.

For any $m$, let
\begin{align*}
X_m=&\left(L^\bot+\frac{1}{2n}(E_m\cap L)\right)\cap \partial E_m,\\
Z_m=&\partial K_m\cap {\rm pos}_+X_m,
\end{align*}
We observe that if $z\in Z_m$, then $z=tx$ for an $x\in X_m$ where $ t\leq n$ (cf. \eqref{KLS-ellipsoid-symmetric}), thus
$z|L\in \frac{1}{2}(E_m\cap L)$, and turn \eqref{control-shape-rkn-nuKo00} yield that
\begin{equation}
\label{Zm-proj-symmetric}
(z|L)+ \frac{\theta}{2}\cdot  r_{n,m}(B^n\cap L)\subset (z|L)+ \frac{1}{2}(E_m\cap L)\subset E_m\cap L\subset K_m.
\end{equation}
In addition, we deduce that from $q<n$ and \eqref{KLS-ellipsoid-symmetric} that if $z\in Z_m$, then
\begin{equation}
\label{zin-widetildeZm-size}
\|z\|^{q-n}\geq  (nr_{n,m})^{q-n}.
\end{equation}

Let $\omega_m=\nu_{K_m}(Z_m)$, and hence $V_{K_m}(\omega_m)=\widetilde{V}_{K_m}(Z_m)$ by Lemma~\ref{Gauss-map-bijective}. We also observe that $\widetilde{V}_{K_m}(Z_m)\geq \widetilde{V}_{E_m}(X_m)$ by $E_m\subset K_m$ and \eqref{tildeVKZ-def2}. Using the positive definite diagonal transform $\Phi_m$ such that $\Phi_mE_m=B^n$, we deduce from  the equivariance   \eqref{tildeVK-linear-inv} of the cone volume measure under linear transforms
that
$$
\frac{V_{K_m}(\omega_m)}{|E_m|}=\frac{\widetilde{V}_{K_m}(Z_m)}{|E_m|}\geq 
\frac{\widetilde{V}_{E_m}(X_m)}{|E_m|}=\frac{\widetilde{V}_{B^n}(\{v\in S^{n-1}:\,\|v|L\|\leq \frac1{2n}\})}{|B^n|}\succeq 1,
$$
and hence \eqref{KLS-ellipsoid-symmetric} implies that
\begin{equation}
\label{VKmomegam-size}
V_{K_m}(\omega_m)\succeq r_{1,m}\cdot\ldots\cdot r_{n,m}.
\end{equation}

For $v\in \omega_m$, if $v$ is the exterior normal at a $z\in Z_m$, then
 $z=(z|L)+\tau a$ and $v=w\cos\alpha+b\sin\alpha$ for some $a,b\in S^{n-1}\cap L^\bot$, $w\in S^{n-1}\cap L$ and $\alpha\in[0,\frac{\pi}2]$ where $\tau\leq nr_{n-k,m}$ by \eqref{KLS-ellipsoid-symmetric}. Since $y=(z|L)+\frac{\theta}{2}\cdot  r_{n,m}\cdot w\in K_m$ by \eqref{Zm-proj-symmetric}, it follows that
\begin{align*}
0\leq&\langle v,z-y\rangle=\sin \alpha\cdot \tau \langle b,a\rangle-
\left\langle\cos\alpha\cdot w,\frac{\theta}{2}\cdot  r_{n,m}\cdot w\right\rangle\\
\leq  &nr_{n-k,m}-\cos\alpha\cdot \frac{\theta}{2}\cdot  r_{n,m};
\end{align*}
therefore, if $v\in \omega_m$, then
$$
\|v|L\|=\cos \alpha\leq \frac{2nr_{n-k,m}}{\theta  r_{n,m}}=\xi_m.
$$
It follows from  \eqref{control-shape-rkn-nuKo00} that
\begin{equation}
\label{nuKmz-proj-symmetric}
\omega_m\subset \Gamma_{\xi_m}=\left\{v\in S^{n-1}:\,\langle v,e_n\rangle\leq \xi_m\right\}
\mbox{ \ where }\lim_{m\to\infty}\xi_m=0.
\end{equation}

Next, we deduce from  Lemma~\ref{Gauss-map-bijective} that
$$
\int_{\omega_m}h_{K_m}^{p}\,d\widetilde{C}_{p,q,K_m}=n\int_{\omega_m}(\varrho_K\circ\alpha^*_K)^{q-n}\,dV_{K_m},
$$
and hence the condition \eqref{control-shape-lambda-theoproof00} and the basic estimate \eqref{rpqnmvol-eq} imply that
\begin{equation}
\label{pq-core-estimate-symmetric}
\int_{\omega_m}\left(\frac{h_{K_m}}{nr_{n,m}}\right)^{p}\,d\mathcal{H}^{n-1}\approx
\frac1{r_{1,m}\cdot\ldots\cdot r_{n,m}}
\int_{\omega_m}\left(\frac{\varrho_K\circ\alpha^*_K}{nr_{n,m}}\right)^{q-n}\,dV_{K_m}.
\end{equation}

On the right hand side of \eqref{pq-core-estimate-symmetric}, we apply first \eqref{zin-widetildeZm-size}, and after that \eqref{VKmomegam-size} to obtain that
\begin{equation}
\label{pq-core-estimate-symmetric-RHS}
\frac1{r_{1,m}\cdot\ldots\cdot r_{n,m}}
\int_{\omega_m}\left(\frac{\varrho_K\circ\alpha^*_K}{nr_{n,m}}\right)^{q-n}\,dV_{K_m}\geq
\frac{V_{K_m}(\omega_m)}{r_{1,m}\cdot\ldots\cdot r_{n,m}}\succeq 1.
\end{equation}
On the other hand, let $s=\max\{\frac12,-p\}\in(0,1)$. Since $h_{K_m}(v)\leq nr_{n,m}$ (cf. \eqref{KLS-ellipsoid-symmetric}) for $v\in S^{n-1}$, we deduce from Lemma~\ref{qth-intrvol-ellipsoid} (ii) and \eqref{nuKmz-proj-symmetric} that on the left hand side of \eqref{pq-core-estimate-symmetric}, we have
\begin{align}
\nonumber
\limsup_{m\to\infty}\int_{\omega_m}\left(\frac{h_{K_m}}{nr_{n,m}}\right)^{p}\,d\mathcal{H}^{n-1}\leq &
\limsup_{m\to\infty}\int_{\omega_m}\left(\frac{h_{K_m}}{nr_{n,m}}\right)^{-s}\,d\mathcal{H}^{n-1}\\
\nonumber
\leq &\limsup_{m\to\infty}(nr_{n,m})^{s}\int_{ \Gamma_{\xi_m}}h_{E_m}^{-s}\,d\mathcal{H}^{n-1}\\
\label{pq-core-estimate-symmetric-LHS}
\preceq&\limsup_{m\to\infty}r_{n,m}^{s}\cdot \xi_m^{\frac{1-s}{2-s}}\cdot r_{n,m}^{-s}=0.
\end{align}
Therefore, combining \eqref{pq-core-estimate-symmetric-RHS} and \eqref{pq-core-estimate-symmetric-LHS} contradicts \eqref{pq-core-estimate-symmetric}. We conclude
the existence of $\Gamma>1$ depending on $n$, $\lambda$, $p$ and $q$, such that if an $o$-symmetric convex body $K\subset\R^n$ satisfies that  $\lambda^{-1}\mathcal{H}^{n-1}\leq \widetilde{C}_{p,q,K}\leq\lambda\mathcal{H}^{n-1}$,
then ${\rm diam}\,K\leq \Gamma$.

Finally, by considering the indirect assumption as in \eqref{Theorem1-volume} and the argument following \eqref{Theorem1-volume},  one can show the existence of an $\eta>0$ depending on $n$, $p$, $\Gamma$ and $\lambda>1$ such that if $\lambda^{-1}\mathcal{H}^{n-1}\leq \widetilde{C}_{p,q,K}\leq\lambda\mathcal{H}^{n-1}$ holds for an $o$-symmetric convex body $K\subset\R^n$, then $|K|\geq \eta$, completing the proof of Proposition~\ref{lambda-Cpq-symmetric-prop}.
\proofbox

\section{Proof of Theorems~\ref{local-uniqueness-Cpq} and \ref{local-uniqueness-even-Cpq}}
\label{secUniqueness}

In this section, we complete the proofs of Theorem~\ref{local-uniqueness-Cpq} and Theorem~\ref{local-uniqueness-even-Cpq}. The argument is similar to that of \cite[Proof of Theorem 1.1]{CFL22} and \cite[Proof of Theorem 1.2]{BoS24}; for the reader’s convenience, we sketch the proof here.

We need the weak convergence of the $L_p$ dual Minkowski measure in our setting.

\begin{lemma}
\label{Cpq-weak-convergence-qton}
Let $\lambda>1$ and $p\in(-1,1)$, and let $q_m>1$ tend to $n$, and $K_m\in \mathcal{K}^n_0$ tend to $K_\infty\in \mathcal{K}^n_0$ as $m$ tends to infinity where $\lambda^{-1}\mathcal{H}^{n-1}\leq \widetilde{C}_{p,q_m,K_m}\leq \lambda\mathcal{H}^{n-1}$.
Then $\widetilde{C}_{p,q_m,K_m}$ tends weakly to $S_{p,K_\infty}$.
\end{lemma}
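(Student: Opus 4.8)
The plan is to reduce the claim to the (known) weak continuity of the $L_p$ surface area measure $S_{p,\cdot}$, after checking that the extra ``dual weight'' $\|Dh_{K_m}\|^{q_m-n}$ is uniformly close to $1$ on the relevant set. Set $d_m=q_m-n\to 0$ and fix $D>1$ with $K_m\subset DB^n$ for all $m$ (possible since $K_m\to K_\infty$); as $o\in K_m$, \eqref{DhK-rhoK}--\eqref{DhK-length-rhoK} give $h_{K_m}(v)\le\|Dh_{K_m}(v)\|\le D$ at every $v$ where $h_{K_m}$ is differentiable. The hypothesis $\lambda^{-1}\mathcal{H}^{n-1}\le\widetilde{C}_{p,q_m,K_m}\le\lambda\mathcal{H}^{n-1}$ lets us invoke Lemma~\ref{Gauss-map-bijective}: $h_{K_m}$ is $C^1$ on the open set $\{h_{K_m}>0\}$ by part (i); both $\widetilde{C}_{p,q_m,K_m}$ and $S_{p,K_m}=h_{K_m}^{1-p}\,dS_{K_m}$ vanish on $\{h_{K_m}=0\}$ (the former since $\{h_{K_m}=0\}=\nu_{K_m}(o)$ is $\mathcal{H}^{n-1}$-null by part (iv) and $\widetilde{C}_{p,q_m,K_m}\le\lambda\mathcal{H}^{n-1}$, the latter since $h_{K_m}^{1-p}=0$ there); and on $\{h_{K_m}>0\}$ the identity \eqref{CpqK-from-SK} reads $d\widetilde{C}_{p,q_m,K_m}=Q_m\,dS_{K_m}$ with $Q_m:=h_{K_m}^{1-p}\|Dh_{K_m}\|^{d_m}$.

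First I would prove that $Q_m\to h_{K_m}^{1-p}$ uniformly on $\{h_{K_m}>0\}$. Using $h_{K_m}\le\|Dh_{K_m}\|\le D$ and splitting into the cases $d_m\ge 0$ and $d_m<0$ --- and using that $1-p+d_m>0$ for $m$ large, which holds since $p<1$ --- one checks that $Q_m$ always lies between $D^{d_m}h_{K_m}^{1-p}$ and $h_{K_m}^{1-p+d_m}$. Since $|D^{d_m}-1|\,D^{1-p}\to 0$ and, by Corollary~\ref{htodelta-bound} applied with $R=D$ and $\delta=d_m$, $\sup_{h\in[0,D]}|h^{1-p+d_m}-h^{1-p}|\to 0$, we get $\delta_m:=\sup_{\{h_{K_m}>0\}}|Q_m-h_{K_m}^{1-p}|\to 0$. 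Hence for any $\varphi\in C(S^{n-1})$,
\[
\Bigl|\int_{S^{n-1}}\varphi\,d\widetilde{C}_{p,q_m,K_m}-\int_{S^{n-1}}\varphi\,dS_{p,K_m}\Bigr|
=\Bigl|\int_{\{h_{K_m}>0\}}\varphi\,(Q_m-h_{K_m}^{1-p})\,dS_{K_m}\Bigr|
\le\|\varphi\|_{\infty}\,\delta_m\,S_{K_m}(S^{n-1}),
\]
which tends to $0$ because $S_{K_m}(S^{n-1})\le S_{DB^n}(S^{n-1})$ is bounded. Combining this with \eqref{weak-convergence-SpK}, namely $\int\varphi\,dS_{p,K_m}\to\int\varphi\,dS_{p,K_\infty}$, yields $\int\varphi\,d\widetilde{C}_{p,q_m,K_m}\to\int\varphi\,dS_{p,K_\infty}$, which is the asserted weak convergence.

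The main obstacle is exactly the uniform estimate for $Q_m$. When $q_m<n$ the factor $\|Dh_{K_m}\|^{d_m}$ is unbounded where $\|Dh_{K_m}\|$ is small --- and this genuinely occurs, e.g.\ near the origin when $o\in\partial K_m$ (cf.\ the Remark after Lemma~\ref{Gauss-map-bijective}) --- so a naive pointwise bound is hopeless. The point is to keep the companion factor $h_{K_m}^{1-p}$ and use $\|Dh_{K_m}\|\ge h_{K_m}$ together with $1-p+d_m>0$: this converts the dangerous product into $h_{K_m}^{1-p+d_m}$, which Corollary~\ref{htodelta-bound} controls uniformly; this is where $p<1$ enters. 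Everything else --- the $C^1$ regularity on $\{h_{K_m}>0\}$, the vanishing on $\{h_{K_m}=0\}$, the identity \eqref{CpqK-from-SK} --- is supplied by Lemma~\ref{Gauss-map-bijective}, and the final limit is the classical weak continuity of $S_{p,\cdot}$ on $\mathcal{K}^n_o$.
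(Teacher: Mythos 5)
Your proof is correct and rests on exactly the same three ingredients as the paper's: the pointwise inequality $h_{K}\le\|Dh_{K}\|\le D$, the uniform power-function estimate of Corollary~\ref{htodelta-bound}, and the weak continuity of the surface area measure. The only difference is organizational: the paper routes through the smoothed weight $(d_m^2h_{K_m}^2+\|Dh_{K_m}\|^2)^{d_m/2}$ in \eqref{phiVKinfty-limitK-VPsiKm0}--\eqref{phiVKinfty-limitK-SPsiKm0} and then factors out a correction $\theta_m\in[2^{-|d_m|/2},2^{|d_m|/2}]$ to recover $\widetilde{C}_{p,q_m,K_m}$, whereas you sandwich the raw density $Q_m=h_{K_m}^{1-p}\|Dh_{K_m}\|^{d_m}$ directly between $D^{d_m}h_{K_m}^{1-p}$ and $h_{K_m}^{1-p+d_m}$ and split the limit into $\int\varphi\,d\widetilde{C}_{p,q_m,K_m}-\int\varphi\,dS_{p,K_m}\to 0$ followed by \eqref{weak-convergence-SpK}. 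Your two-step decomposition is arguably cleaner and makes the role of $p>-1$ (ensuring $1-p+d_m>0$ for large $m$, so Corollary~\ref{htodelta-bound} applies) more transparent; the paper's route, at the cost of the extra smoothing trick, keeps the argument parallel to the rescaled computation in the proof of Theorem~\ref{qcloseton-dualcurvcloseto1} where that smoothing is genuinely needed.
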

\begin{proof}
The main idea is similar to the argument leading to \eqref{phiVKinfty-limitK-VPsiKm}. 
To simplify the formulas, let $d_m=q_m-n$. As $\widetilde{C}_{p,n,K_m}=S_{p,K_m}$ tends weakly to $S_{p,K_\infty}$ according to
\eqref{weak-convergence-SpK}, we may assume that $d_m\neq 0$ for any $m$.

Now Lemma~\ref{Gauss-map-bijective} (i) says that  $h_{K_m}$ is $C^{1}$  on the set $\{h_{K_m}>0\}$. 
For a continuous function $\varphi:S^{n-1}\to S^{n-1}$, we claim that
\begin{align}
\label{phiVKinfty-limitK-VPsiKm0}
\int_{S^{n-1}}\varphi\,dS_{p,K_\infty}=&\lim_{m\to \infty} \int_{S^{n-1}}\left(d_m^2h_{K_m}^2+\|Dh_{K_m}\|^2\right)^{\frac{d_m}2}
\varphi h_{K_m}^{-p}\,dV_{K_m}\\
\label{phiVKinfty-limitK-SPsiKm0}
=&\lim_{m\to \infty} \frac1n\int_{S^{n-1}}\left(d_m^2h_{K_m}^2+\|Dh_{K_m}\|^2\right)^{\frac{d_m}2}
\varphi h_{K_m}^{1-p}\,dS_{K_m}.
\end{align}
We observe that \eqref{phiVKinfty-limitK-VPsiKm0} and \eqref{phiVKinfty-limitK-SPsiKm0} are equivalent by definition. 
Choosing some $R>0$ such that $K_\infty\subset {\rm int}\,RB^n$, we deduce from \eqref{DhK-length-rhoK}, \eqref{DhK-rhoK} and $d_m\to 0$  that if $m$ is large, then 
$$
d_m^2h_{K_m}^2\leq d_m^2h_{K_m}^2+\|Dh_{K_m}\|^2\leq R^2,
$$
 and hence
$$
Q_m=h_{K_m}^{1-p}\left(d_m^2h_{\Psi_mK_m}^2+\|Dh_{K_m}\|^2\right)^{\frac{d_m}2}
$$
satisfies that  
$$
\begin{array}{rclcll}
d_m^{d_m}h_{K_m}^{1+d_m-p}&\leq &Q_m &\leq & R^{d_m}h_{K_m}^{1-p}&\mbox{ if }d_m>0\\[1ex]
R^{d_m}h_{K_m}^{1-p}&\leq &Q_m &\leq & |d_m|^{d_m}h_{K_m}^{1+d_m-p}&\mbox{ if }d_m<0.
\end{array}
$$
As $h_{K_m}$ tends uniformly to $h_{K_\infty}$, Corollary~\ref{htodelta-bound} implies that $Q_m$ tends uniformly to $h_{K_\infty}^{1-p}$. Now $S_{K_m}$ tends weakly to $S_{K_\infty}$ (cf. \eqref{weak-convergence-SK}), proving \eqref{phiVKinfty-limitK-SPsiKm0} by \eqref{weak-convergence-SK}. In turn, we conclude the claim \eqref{phiVKinfty-limitK-VPsiKm0}.

Since $\|Dh_{K_m}(x)\|\geq h_{K_m}(x)>0$ for  $x\in S^{n-1}\backslash\nu_{K_m}(o)$ according to \eqref{DhK-rhoK} and 
$V_{K_m}\left(\nu_{K_m}(o)\right)=0$, we deduce from \eqref{phiVKinfty-limitK-VPsiKm0} and \eqref{CpqK-from-cone-volume} in Lemma~\ref{Gauss-map-bijective} that
\begin{align}
\label{phiSpKinfty-limCpqKm}
\int_{S^{n-1}}\varphi\,dS_{p,K_\infty}=&\lim_{m\to \infty} \int_{S^{n-1}}\theta_m\cdot\varphi\,d\widetilde{C}_{p,q_m,K_m};&&\\
\nonumber
\theta_m(x)=&\left(\frac{d_m^2h_{K_m}(x)^2+\|Dh_{K_m}(x)\|^2}{\|Dh_{K_m}(x)\|^2}\right)^{\frac{d_m}2}&&\mbox{if }x\in S^{n-1}\backslash\nu_{K_m}(o);\\
\nonumber
\theta_m(x)=&1&&\mbox{if }x\in \nu_{K_m}(o)
\end{align}
where we mean $\nu_{K_m}(o)=\emptyset$ if $o\in{\rm int}\,K_m$.
Since $2^{\frac{-|d_m|}2}\leq \theta_m(x)\leq 2^{\frac{|d_m|}2}$ hold for any $x\in S^{n-1}$, it follows that $\theta_m$ tends uniformly to $1$ on $S^{n-1}$. Therefore, \eqref{phiSpKinfty-limCpqKm} yields that
$$
\int_{S^{n-1}}\varphi\,dS_{p,K_\infty}=\lim_{m\to \infty} \int_{S^{n-1}}\varphi\,d\widetilde{C}_{p,q_m,K_m},
$$
proving that $\widetilde{C}_{p,q_m,K_m}$ tends weakly to $S_{p,K_\infty}$.
\end{proof}

Now to prepare the proof of Theorem~\ref{local-uniqueness-Cpq}, let
\[
C^{2,\alpha}_{>0}(S^{n-1}) :=  \{u \in C^{2,\alpha}(S^{n-1}) : u > 0 \text{ on } S^{n-1}\}
\]
and
\[
C^{2,\alpha}_{>0, e} :=  \{u \in C^{2,\alpha}_{>0}(S^{n-1}) : u \text{ is even}\}.
\]

Consider the map
\[
F: \mathbb{R} \times C^{2,\alpha}_{>0}(S^{n-1}) \longrightarrow \mathbb{R} \times C^{\alpha}(S^{n-1}),
\]
defined by
\[
F(q, u) = \left(q, \left(\|\nabla u\|^2 + u^2\right)^{\frac{q-n}{2}} u^{1-p} \det\left(\nabla^2 u + u\,I\right)\right).
\]
Let \(q_0 = n\) and \(u_0 = 1\). A straightforward computation shows that the Fréchet derivative of \(F\) at \((q_0, u_0)\) is given by
\[
T: \mathbb{R} \times C^{2,\alpha}(S^{n-1}) \longrightarrow \mathbb{R} \times C^{\alpha}(S^{n-1})
\]
with
\[
T(t, \phi) = (t,\, \mathcal{L}_{u_0} \phi),
\]
where
\[
\mathcal{L}_{u_0}: C^{2,\alpha}(S^{n-1}) \longrightarrow C^{\alpha}(S^{n-1})
\]
is the linear mapping defined by
\[
\mathcal{L}_{u_0}(\phi) = \Delta_{S^{n-1}} \phi + (n-p)\phi.
\]

\bigskip

\begin{proof}[Proof of Theorem~\ref{local-uniqueness-Cpq}] We  recall that the eigenvalues of \(-\Delta_{S^{n-1}}\) are given by
\[
\lambda_k = k^2 + (n-2)k,\quad k = 0, 1, 2, \dots,
\]
so that $\mathcal{L}_{u_0}$ is invertible when $-1 < p < 1$. This implies that \(T\) is invertible. By the inverse function theorem, there exists a small neighborhood \(\mathcal{N}\) of \((q_0, u_0)\) such that if \((q_1,u_1)\) and \((q_2,u_2)\) are in \(\mathcal{N}\) and
\[
F(q_1, u_1) = F(q_2, u_2),
\]
then \(u_1 = u_2\). Moreover, since \(\mathcal{N}\) is a neighborhood of \((q_0, u_0)\), there exists a constant \(\delta_0 > 0\) such that if \((q, u) \notin \mathcal{N}\) then either \(|q - q_0| > \delta_0\) or \(\|u - u_0\|_{C^{2,\alpha}(S^{n-1})} > \delta_0\).

It now suffices to verify the claim that there exists a constant \(\eta \in (0,1)\), depending only on \(n\), \(\alpha\), and \(p\), such that if \(|q - n| < \eta\) and \(f \in C^{\alpha}(S^{n-1})\) satisfies \(\|f - 1\|_{C^{\alpha}(S^{n-1})} < \eta\), then for any convex body $K\in\mathcal{K}^n_0$ satisfying
\[
d\widetilde{C}_{p,q,K} = f\,dx,
\]
the support function $h_K$ satisfies 
\begin{equation}
\label{qhK-in-N}
(q, h_K) \in \mathcal{N}.
\end{equation}

Suppose, to the contrary, that there exist a sequence \(q_m \to n\), functions \(f_m \to 1\) in \(C^{\alpha}(S^{n-1})\), and convex bodies $K_m\in\mathcal{K}^n_0$ such that
\begin{equation}\label{limitcontra1}
d\widetilde{C}_{p,q_m,K_m} = f_m\,dx,
\end{equation}
and
\begin{equation}\label{limitcontra2}
\|h_{K_m} - 1\|_{C^{2,\alpha}(S^{n-1})} > \delta_0.
\end{equation}

By Theorem \ref{qcloseton-dualcurvcloseto1}, we have \(\|h_{K_m}\|_{L^\infty(S^{n-1})} \leq C\) and \(|K_m| > C^{-1}\) for some constant \(C\) independent of \(m\) (provided \(m\) is sufficiently large). By the Blaschke selection theorem, up to a subsequence, \(K_m\) converges in the Hausdorff distance to a convex body \(K_\infty\). Since \(\widetilde{C}_{p,q_m,K_m}\) tends weakly to $S_{p,K_\infty}$  by Lemma~\ref{Cpq-weak-convergence-qton}, and $f_m$ tends uniformly to the constant $1$ function, we have
\[
S_{p,K_\infty} = \mathcal{H}^{n-1}.
\]
According to  Brendle, Choi, Daskalopoulos \cite{BCD17}, it follows that \(K_\infty\) is the unit ball; hence, \(h_{K_\infty} \equiv 1\).

Thus, we deduce that
\[
\|h_{K_m} - 1\|_{L^\infty(S^{n-1})} \to 0 \quad \text{as } m \to \infty.
\]
Since \(f_m \to 1\) in \(C^{\alpha}(S^{n-1})\) and by applying Caffarelli's regularity theory and Schauder estimates (similar to the arguments in \cite[Proof of Lemma 4.1]{CFL22}), we obtain that
\[
\|h_{K_m} - 1\|_{C^{2,\alpha}(S^{n-1})} \to 0 \quad \text{as } m \to \infty,
\]
which contradicts \eqref{limitcontra2}. 

Therefore, we conclude the claim \eqref{qhK-in-N}, and in turn  Theorem \ref{local-uniqueness-Cpq}. 
\end{proof}

\begin{proof}[Proof of Theorem~\ref{local-uniqueness-even-Cpq}]
The proof is similar to that of Theorem~\ref{local-uniqueness-Cpq}. We only need to restrict \(F\) to \(\mathbb{R} \times C^{2,\alpha}_{>0, e}(S^{n-1})\). Note that for the extraction of a convergent subsequence, instead of using Theorem \ref{qcloseton-dualcurvcloseto1}, we use Theorem \ref{lambda-Cpq-symmetric}. Note also that the limiting equation here is 
\[
\widetilde{C}_{p,q,K} = \mathcal{H}^{n-1},
\]
and the uniqueness of even solutions has been proved by Chen, Huang, Zhao \cite{CHZ19}, provided \(p \geq -n\) and \(q \leq \min\{n, n+p\}\) with \(q \neq p\). 
\end{proof}

\section*{Acknowledgment}
\addcontentsline{toc}{section}{Acknowledgment}
Research of B\"or\"oczky was supported by Hungarian National Research and Innovation Office grant ADVANCED\_24 150613, and research of Chen was supported by National Key $R\&D$ program of China 2022YFA1005400, National Science Fund for Distinguished Young Scholars (No. 12225111).


\begin{thebibliography}{99}

\bibitem{Ale38a}
A.D. Aleksandrov: 
On the theory of mixed volumes. III. Extension of two theorems of
Minkowski on convex polyhedra to arbitrary convex bodies.
(Russian; German summary) Mat. Sbornik N.S., 3 (1938), 27-46. 

\bibitem{Ale96}
A.D. Aleksandrov:  
Selected works. Part I. Gordon and Breach Publishers, Amsterdam, 1996.

\bibitem{And99}
 B. Andrews:
Gauss curvature flow: the fate of rolling stone.
 Invent. Math., 138 (1999), 151-161.

\bibitem{AGN16}
B. Andrews, Pengfei Guan, Lei Ni:
Flow by the power of the Gauss curvature.
Adv. Math., 299 (2016), 174-201.


\bibitem{BG} 
F. Barthe, O. Gu\'{e}don, S. Mendelson, A. Naor:
A probabilistic approach to the geometry of the $l_{p}^{n}$-ball.
Ann. of Probability, 33 (2005), 480-513.

\bibitem{BBCY19}
G. Bianchi, K.J. B\"or\"oczky, A. Colesanti, Deane Yang:
The $L_p$-Minkowski problem for $-n< p<1$ according to Chou-Wang.
Adv. Math., 341 (2019), 493-535.

\bibitem{BBC20}
G. Bianchi, K.J. B\"or\"oczky, A. Colesanti: 
Smoothness in the $L_p$ Minkowski problem for $p<1$. 
J. Geom. Anal., 30 (2020), 680-705.

\bibitem{Bor23}
K.J. B\"or\"oczky:
The Logarithmic Minkowski conjecture and the $L_p$-Minkowski Problem.
In: Harmonic analysis and convexity,  De Gruyter, Berlin, (2023), 83-118.
arxiv:2210.00194
		
\bibitem{BoF19}
K.J. B\"or\"oczky, F. Fodor: 
The $L_p$ dual Minkowski problem for $p>1$ and $q>0$. 
J. Differential Equations, 266 (2019), 7980-8033.

\bibitem{BoK22}
K.J. B\"or\"oczky, P. Kalantzopoulos:
Log-Brunn-Minkowski inequality under symmetry.
Trans. AMS,  375 (2022), 5987-6013.

\bibitem{BKMZ}
 K.J. B\"or\"oczky, \'A. Kov\'acs, Stephanie Mui, Gaoyong Zhang:
Dual Curvature Density Equation with Group Symmetry. submitted.
arxiv:2503.10044

\bibitem{BLYZ12}
K.J. B\"or\"oczky, E. Lutwak, Deane Yang, Gaoyong Zhang:
The log-Brunn-Minkowski-inequality.
Adv. Math., 231 (2012), 1974-1997.

\bibitem{BLYZ13}
K.J. B\"or\"oczky, E. Lutwak, Deane Yang, Gaoyong Zhang:
The Logarithmic Minkowski Problem.
Journal of the American Mathematical Society, 26 (2013), 831-852.

		
\bibitem{BoS24} 
K.J. B\"or\"oczky, C. Saroglou:
Uniqueness when the $L_p$ curvature is close to be a constant for $p\in[0,1)$.
Calc. Var. Partial Differential Equations 63 (2024), no. 6, Paper No. 154, 26 pp.

\bibitem{BCD17} 
S. Brendle, Kyeongsu Choi, P. Daskalopoulos: 
Asymptotic behavior of flows by powers of the Gaussian curvature. 
Acta Math., 219 (2017), 1-16.

\bibitem{CMH}
C. Cabezas-Moreno, Jinrong Hu:
The $L_p$ dual Christoffel-Minkowski problem for $1<p<q\leq k+1$ with $1\leq k\leq n$.
arXiv:2504.04931  

		
\bibitem{Caf90a}
L.A. Caffarelli:
A localization property of viscosity solutions to the Monge-Amp\`ere equation and their strict convexity.
Ann. of Math. (2) 131, (1990), 129-134.
		
\bibitem{Caf90b}
L.A. Caffarelli:
Interior $W^{2,p}$ estimates for solutions of the Monge-Amp\`ere equation.
Ann. of Math. (2), 131 (1990), 135-150.

\bibitem{Caf93}
L.A. Caffarelli:
A note on the degeneracy of convex solutions to Monge-Amp\`ere equation.
Comm. Partial Differential Equations, 18 (1993), 1213-1217.

\bibitem{Cal58}
E. Calabi:
Improper affine hyperspheres of convex type and a generalization of a theorem by K. J\"orgens.
Mich. Math. J., 5 (1958), 105-126.

\bibitem{CHZ19}
Chuanqiang Chen, Yong Huang, Yiming Zhao: 
Smooth solutions to the $L_p$ dual Minkowski problem. 
Math. Ann., 373 (2019), 953-976.
		
\bibitem{HCh18}
Haodi Chen: 
On a generalised Blaschke-Santal\'o inequality. 
arXiv:1808.02218

\bibitem{CFL22}
Shibing Chen, Yibin Feng, Weiru Liu: Uniqueness of solutions to the logarithmic Minkowski problem in $\mathbb{R}^3$. Adv. Math., 411 (A) (2022), 108782.

\bibitem{CHLZ}
Shibing Chen, Shengnan Hu, Weiru Liu, Yiming Zhao:
On the planar Gaussian-Minkowski problem.
arXiv:2303.17389

\bibitem{CHLL20}
Shibing Chen, Yong Huang, Qi-Rui Li, Jiakun Liu:
The $L_p$-Brunn-Minkowski inequality for $p<1$.
Adv. Math., 368 (2020), 107166.	






\bibitem{ChL21}
Haodi Chen,  Qi-Rui Li: 
The $L_p$ dual Minkowski problem and related parabolic flows. 
J. Funct. Anal. 281 (2021), Paper No. 109139, 65 pp.  

\bibitem{CLZ17}
Shibing Chen, Qi-Rui Li, Guangxian Zhu:
On the $L_p$ Monge-Amp\`ere equation.
Journal of Differential Equations, 263 (2017), 4997-5011.

	
\bibitem{CLZ19}
Shibing Chen, Qi-Rui Li, Guangxian Zhu:
The Logarithmic Minkowski Problem for non-symmetric measures.
Trans. Amer. Math. Soc., 371 (2019), 2623-2641.

\bibitem{ChY76} 
Shiu-Yuen Cheng, Shing-Tung Yau:
On the regularity of the solution of the $n$-dimensional Minkowski problem.
Comm. Pure 	Appl. Math. 29 (1976), 495-561.

\bibitem{ChY86} 
Shiu-Yuen Cheng, Shing-Tung Yau:
Complete affine hypersurfaces. Part I. The completeness of affine metrics.
Commun. Pure Appl. Math., 39 (1986), 839-866.


\bibitem{ChW06} 
Kai-Shen  Chou, Xu-Jia Wang:
The $L_{p}$-Minkowski
problem and the Minkowski problem in centroaffine geometry.
Adv. Math., 205 (2006), 33-83.

\bibitem{CLM17}
A. Colesanti, G. Livshyts, A. Marsiglietti: On the stability of Brunn-Minkowski type inequalities. 
J. Funct. Anal. 273 (2017), 1120-1139.

\bibitem{CoL20}
A. Colesanti, G. Livshyts: 
A note on the quantitative local version of the log-Brunn-Minkowski inequality. 
The mathematical legacy of Victor Lomonosov-operator theory, 85-98, Adv. Anal. Geom., 2, De Gruyter, Berlin, 2020.

\bibitem{CrF23}
G. Crasta, I. Fragal\`a:
On a geometric combination of functions related to Pr\'ekopa-Leindler inequality.
Mathematika, 69 (2023), 482-507.

\bibitem{DPF14}
G. De Philippis, Alessio Figalli:
The Monge-Amp\`ere equation and its link to optimal transportation.
Bull. Amer. Math. Soc. 51 (2014), 527--580.

\bibitem{Du21}
Shi-Zhong  Du: 
On the planar $L_p$-Minkowski problem. 
Jour. Diff. Equations, 287 (2021), 37-77.

\bibitem{FHX23}
Yibin Feng,  Shengnan Hu, Lei Xu: 
On the $L_p$ Gaussian Minkowski problem.
J. Differential Equations, 363 (2023), 350-390. 

\bibitem{FLX23}
Yibin Feng, Weiru Liu, Lei Xu: 
Existence of non-symmetric solutions to the Gaussian Minkowski problem. 
 J. Geom. Anal., 33 (2023), no. 3, Paper No. 89, 39 pp. 

		
\bibitem{Fig17}
A. Figalli:
The Monge-Amp\`ere equation and its applications. 
Z\"urich Lectures in Advanced Mathematics. EMS,  Z\"urich, 2017.

\bibitem{Fir74}
W.J. Firey: 
Shapes of worn stones. 
Mathematika, 21 (1974), 1-11.


\bibitem{GHWXY19}
R. Gardner, D. Hug, W. Weil, Sudan Xing, Deping Ye: 
General volumes in the Orlicz-Brunn-Minkowski theory and a related Minkowski problem I. 
Calc. Var. Partial Differential Equations 58 (2019), Paper No. 12, 35 pp.

\bibitem{GHXY20}
R. Gardner, D. Hug, Sudan Xing, Deping Ye: 
General volumes in the Orlicz-Brunn-Minkowski theory and a related Minkowski problem II. 
Calc. Var. Partial Differential Equations 59 (2020), Paper No. 15, 33 pp.

\bibitem{GromovMilman}
M. Gromov, V.D. Milman:
Generalization of the spherical isoperimetric inequality for uniformly convex Banach Spaces. 
Composito Math., 62 (1987), 263-282.

\bibitem{LGWa}
Qiang Guang, Qi-Rui Li, Xu-Jia Wang:
The $L_p$-Minkowski problem with super-critical exponents.
arXiv:2203.05099

\bibitem{LGW23}
Qiang Guang, Qi-Rui Li, Xu-Jia Wang:
Q. Guang, Q-R. Li, X.-J. Wang:
Flow by Gauss curvature to the $L_p$ dual Minkowski problem. 
Math. Eng. 5 (2023), no. 3, Paper No. 049, 19 pp. 

\bibitem{JinrongHu}
Jinrong Hu:
Uniqueness of solutions to the dual Minkowski problem. arxiv:2502.18032

\bibitem{HuI25}
Yingxiang Hu, M.N. Ivaki: 
Stability of the Cone-volume measure with near constant density. 
 Int. Math. Res. Not. IMRN, 6 (2025), rnaf062. 

\bibitem{HuL13}
Yong Huang, QiuPing Lu:  
On the regularity of the $L_p$ Minkowski problem. 
Adv. in Appl. Math., 50 (2013), 268-280.

\bibitem{HLYZ16}
Yong Huang, E. Lutwak, Deane Yang, Gaoyong Zhang: 
Geometric measures in the dual Brunn-Minkowski theory and their associated Minkowski problems. 
Acta Math. 216 (2016),  325-388.

\bibitem{HXZ21} 
Yong Huang, Dongmeng Xi, Yiming Zhao: 
The Minkowski problem in Gaussian probability space.
Adv. Math., 385 (2021), 107769.

\bibitem{HuZ18} 
Yong Huang, Yiming Zhao: 
On the Lp dual Minkowski problem. 
Adv. Math. 332 (2018), 57-84. 
	
\bibitem{Hug96}
D. Hug: 
Contributions to affine surface area. 
Manuscripta Math., 91 (1996), 283-301.

\bibitem{HLYZ05} 
D. Hug, E. Lutwak, Deane Yang, Gaoyong Zhang:
On the $L_{p}$ Minkowski problem for polytopes.
Discrete Comput. Geom., 33 (2005), 699-715.

\bibitem{Iva22}
M.N. Ivaki:
On the stability of the $L_p$-curvature.
JFA, 283 (2022), 109684.

\bibitem{IvM23}
M.N. Ivaki, E. Milman:
Uniqueness of solutions to a class of isotropic curvature problems.
Adv. Math., 435 (2023), part A, Paper No. 109350, 11 pp. 

\bibitem{IvM23b}
M.N. Ivaki, E. Milman:
$L^p$-Minkowski Problem under Curvature Pinching.
Int. Math. Res. Not., IMRN, accepted.
arXiv:2307.16484

\bibitem{JLW15}
Huaiyu Jian, Jian Lu, Xu-Jia Wang:  
Nonuniqueness of solutions to the $L_p$-Minkowski problem. 
Adv. Math. 281 (2015), 845-856.

\bibitem{KLS95}
R. Kannan, L. Lovász, M. Simonovits: 
Isoperimetric problems for convex bodies and a localization lemma. 
Discrete Comput. Geom., 13 (1995),  541-559.

\bibitem{Kol20}
A.V. Kolesnikov:
Mass transportation functionals on the sphere with applications to the logarithmic Minkowski problem.
Mosc. Math. J., 20 (2020), 67-91.

\bibitem{KoL}
A.V. Kolesnikov, G. V. Livshyts:
On the Local version of the Log-Brunn-Minkowski conjecture and some new related geometric inequalities.
 Int. Math. Res. Not. IMRN, (2022), no. 18, 14427-14453.

\bibitem{KoM22}
A.V. Kolesnikov, E. Milman:
Local $L_p$-Brunn-Minkowski inequalities for $p<1$.
 Memoirs of the American Mathematical Society, 277 (2022), no. 1360.

\bibitem{LiW}
Haizhong Li, Yao Wan:
Classification of solutions for the planar isotropic $L_p$ dual Minkowski problem.
arXiv:2209.14630


\bibitem{LLL22}
Qi-Rui Li, Jiakun Liu, Jian Lu:
Non-uniqueness of solutions to the dual $L_p$-Minkowski problem.
IMRN, (2022), 9114-9150.

\bibitem{LSW20}
Qi-Rui Li, Weimin Sheng, Xu-Jia Wang: Flow by Gauss curvature to the Aleksandrov and dual Minkowski problems. J. Eur. Math. Soc. (JEMS), 22 (2020), 893-923. 


\bibitem{Liu22}
JiaQian Liu:
The $L_p$-Gaussian Minkowski problem. 
Calc. Var. Partial Differential Equations 61 (2022), Paper No. 28, 23 pp. 

\bibitem{LMNZ20}
G. Livshyts, A. Marsiglietti, P. Nayar, A. Zvavitch: 
On the Brunn-Minkowski inequality for general measures with applications to new isoperimetric-type inequalities. Trans. Amer. Math. Soc. 369 (2017), no. 12, 8725-8742.

\bibitem{LuP21}
Fangxia Lu, Zhaonian Pu:  
The $L_p$ dual Minkowski problem about $0<p<1$ and $q>0$. 
Open Math., 19 (2021), 1648-1663.

\bibitem{Lud10}
M. Ludwig:
General affine surface areas.
Adv. Math., 224 (2010), 2346-2360.

\bibitem{Lut90}
E. Lutwak: Centroid bodies and dual mixed volumes. In: Proc. London Math. Soc. 60 (1990),
365-391.

\bibitem{Lut93}
E. Lutwak:
Selected affine isoperimetric inequalities.
In: Handbook of convex geometry,
North-Holland, Amsterdam, 1993, 151-176.

\bibitem{Lut93a}
E. Lutwak:
The Brunn-Minkowski-Firey theory. I. Mixed volumes and the Minkowski problem.
J. Differential Geom. 38 (1993), 131-150.

\bibitem{LYZ18}
E. Lutwak, Deane  Yang, Gaoyong Zhang:
$L_p$ dual curvature measures.
Adv. Math., 329 (2018), 85-132.

\bibitem{Mil24}
E. Milman:
A sharp centro-affine isospectral inequality of
Szeg\H{o}-Weinberger type and the $L_p$-Minkowski problem. J. Differential Geom. 127 (2024), 373-408.

\bibitem{Mil25}
E. Milman:
Centro-Affine Differential Geometry and the Log-Minkowski Problem.
 J. Eur. Math. Soc. (JEMS), 27 (2025), 709-772.

\bibitem{Min03}
H. Minkowski:
Volumen und Oberf\"ache.
Math. Ann., 57 (1903), 447-495.

\bibitem{Min11}
H. Minkowski:  
Theorie der konvexen K\"orper, insbesondere Begr\"undung ihres
Oberfl\"achenbegriffs, manuscript, 
printed in: Gesammelte Abhandlungen 2,  ed.: D. Hilbert, Teubner, Leipzig 1911, 131-229.


\bibitem{Nar07}
A. Naor:
The surface measure and cone measure on the sphere of $l^n_p$.
	Trans. Amer. Math. Soc., 	359 (2007), 1045-1079.

\bibitem{Nir53} 
L. Nirenberg:
The Weyl and Minkowski problems in differential geometry in the large.
 Comm. Pure and Appl. Math., 6 (1953), 337-394.

\bibitem{PaW12} 
G. Paouris, E. Werner:
Relative entropy of cone measures and $L_{p}$ centroid bodies.
Proc. London Math. Soc., 104 (2012), 253-286.

\bibitem{Pog72}
A.V. Pogorelov:
On the improper convex affine hyperspheres.
Geom. Dedic., 1 (1972), 33-46.

\bibitem{Pog78} 
A.V. Pogorelov:
The Minkowski multidimensional problem.
V.H. Winston \& Sons, Washington, D.C, 1978.

\bibitem{Sar15}
C. Saroglou:
Remarks on the conjectured log-Brunn-Minkowski inequality.
Geom. Dedicata 177 (2015), 353-365.

\bibitem{Sar16}
C. Saroglou:
More on logarithmic sums of convex bodies. 
Mathematika, 62 (2016), 818-841.

\bibitem{Sar22}
C. Saroglou:
On a non-homogeneous version of a problem of Firey. 
Math. Ann., 382 (2022), 1059-1090.

\bibitem{Sar}
C. Saroglou:
A non-existence result for the $L_p$-Minkowski problem.	Proc. AMS, accepted.
arXiv:2109.06545

\bibitem{Sch14}
R. Schneider:
Convex Bodies: The Brunn-Minkowski Theory.
Cambridge University Press, 2014.

\bibitem{Stancu}
A. Stancu:
The discrete planar $L_0$-Minkowski problem.
Adv. Math., 167 (2002), 160-174.
	
\bibitem{Stancu1}
A. Stancu:
On the number of solutions to the discrete two-dimensional $L_0$-Minkowski problem.
Adv. Math., 180 (2003), 290-323.

\bibitem{Sta22}
A. Stancu: Prescribing centro-affine curvature from one convex body to another.
 Int. Math. Res. Not. IMRN, (2022), 1016-1044.

\bibitem{TrW08}
N.S. Trudinger, Xu-Jia Wang: The Monge-Amp\`ere equation and its geometric applications. 
Handbook of geometric analysis. No 1, Adv. Lect. Math. (ALM), 7, Int. Press,  (2008), 467-524.

\bibitem{vHa}
R. van Handel:
The local logarithmic Brunn-Minkowski inequality for zonoids.
In: Geom. Aspects of Funct. Anal., Lecture Notes in Mathematics 2327, Springer,  (2023), 355-379. 

		
\end{thebibliography}
\end{document}